\pgfplotsset{width=11cm,compat=1.15}
\newtheorem{theorem}{Theorem}[section]
\newtheorem{lemma}[theorem]{Lemma}
\newtheorem{prop}[theorem]{Proposition}
\newtheorem{rmk}[theorem]{Remark}
\newtheorem{ex}[theorem]{Example}
\newtheorem{definition}[theorem]{Definition}
\newtheorem{clm}[theorem]{Claim}
\def    \C      {{\mathbb C}}
\def    \R      {{\mathbb R}}
\title{On singular Lagrangian fibrations and applications to symplectic embeddings I}
\author{Santiago Achig-Andrango, Renato Vianna and Alejandro Vicente}
\date{}
\begin{document}
\maketitle
\begin{abstract}
	In this paper, we construct singular Lagrangian fibrations on some examples of disk cotangent bundles in dimensions 4 and 6. As an application, we show how this construction can be used to obtain toric domains in some cases. In particular, we recover results from Ferreira, Ramos, and Vicente on the Gromov width of the disk cotangent bundle of spheres of revolution, as well as results from Ramos on the Lagrangian bidisk. We also briefly discuss how this technique can be used to study symplectic embedding problems.
\end{abstract}

\section{Introduction}\label{sec: intro}

Lagrangian fibrations are objects of paramount importance in Symplectic Topology. They have emerged gradually through several mathematical developments, with key ideas forming at the intersection of classical mechanics, symplectic geometry, and algebraic geometry. The beginning of the realization of the concept of Lagrangian fibration came from the study of integrable systems in the late 18th and early 19th century, by Joseph-Louis Lagrange and William Rowan Hamilton. However, one of the earliest clear instances where Lagrangian fibrations appear explicitly is in the work of Joseph Liouville in the mid-19th century, see \cite{Liouville1855}. All this came together as part of a formal structure in the mid-20th century, in the works of Vladimir Arnold, Jean-Marie Souriau, and others. This marked the birth of Symplectic Geometry as a formal framework. 

Basically, when a symplectic manifold
$X$ admits a Lagrangian torus fibration $f : X\to B$, an integral affine structure is induced on the base B, from which we can infer a lot of information
about $X$. Lagrangian fibrations touch upon many of the active lines of research in the area. Perhaps they are recognized the most, by their role in Mirror Symmetry, particularly in the SYZ conjecture. There is, however, another line of work that has increasingly witnessed, in the last three decades, the incursion of Lagrangian fibrations: the study of symplectic embeddings. Works of McDuff, Schlenk and Traynor (see \cite{Latschev2011TheGW} and \cite{Traynor1995SymplecticPC}), pioneered the use of toric Lagrangian fibrations for studying symplectic embeddings. Recently, their line of ideas has significantly evolved in the works of Ramos, Ostrover, Sepe, Ferreira and Vicente (see \cite{ramos2017symplectic}, \cite{Ramos2017OnTR}, \cite{Ferreira2021SymplecticEI}, \cite{Ferreira2023GromovWO}, and \cite{Ostrover2023FromLP}); with the use of integrable systems techniques in order to construct toric Lagrangian fibrations in a wide class of symplectic manifolds. These toric Lagrangian fibrations we refer to are a class of toric domains. A \textit{toric domain} is a subset of $\C^n$ defined by
\[\mathbb{X}_\Omega=\left\{(z_1,\ldots,z_n)\in \C^n\mid(\pi|z_1|^2,\ldots,\pi|z_n|^2)\in\Omega\right\},\] where $\Omega\subset\R_{\ge 0}^n$ is the closure of an open set in $\R^n$. Toric domains are, in particular, examples of Lagrangian fibrations with only elliptic singularities. It turns out that some symplectic manifolds are toric domains in disguise, meaning that, after finding suitable action-angle coordinates, they can be shown to be symplectomorphic to a toric domain. In these cases, finding the description of the corresponding $\Omega\subset\R_{\ge 0}^n$ has useful applications to the study of symplectic embeddings. We expand on this ahead.

This paper focuses on constructing singular Lagrangian fibrations (see Definition \ref{defsinglagfib}) on certain disk cotangent bundles of dimensions 4 and 6. We also outline how to use these singular Lagrangian fibrations to, on one hand, recover the toric domains produced in \cite{ramos2017symplectic} and \cite{Ferreira2023GromovWO}, while on the other hand, how to use them to study symplectic embedding problems. In particular, we discuss lower bounds of the Gromov width for the examples studied here, by means of using the Traynor trick on our fibrations.

These singular Lagrangian fibrations are constructed using the elementary
reduction procedure summarized in Proposition~\ref{generalmeth2}. Roughly
speaking, let \(M^n\) be a Riemannian manifold and let \(D^*M\) be its disk
cotangent bundle. Assume that \(D^*M\) carries a Hamiltonian
\(T^{n-1}\)-action with moment map
\[
\mu:D^*M\to \mathbb R^{n-1}.
\]
For a regular value \(\lambda\) of \(\mu\), the quotient
\[
(D^*M\cap \mu^{-1}(\lambda))/T^{n-1}
\]
is a two-dimensional reduced space. If this reduced space is identified with a
surface \(X_\lambda^2\), and if one chooses a smooth family of simple closed
curves in \(X_\lambda^2\), then the preimage of each such curve is a
Lagrangian torus in \(D^*M\).

The role of the curves is especially important in our setting because we work
with disk cotangent bundles, hence with symplectic domains with boundary.
The curves are chosen so that their preimages remain inside the disk cotangent
bundle. In the examples below, these curves are used to define an action
coordinate, given by the symplectic area of a disk whose boundary lies on the
corresponding Lagrangian torus. Thus the resulting fibration is described on
the regular part by a map of the form
\[
(\mu,A):D^*M\to \mathbb R^n.
\]

The singular fibers arise when the chosen curves meet the locus in the reduced
spaces corresponding to non-free torus orbits. In the examples considered in
this paper, this singular locus is explicit and controlled; this is what allows
us to identify the nodal singularities and the corresponding monodromy.\\

The approach developed in this paper can be generalized to a setting where we require only a $T^k$-action, producing $n-k$ functions that measure areas of disks on a $T^{n-k}$ torus. However, the conditions needed to ensure that the resulting fibration is Lagrangian become substantially more complicated when $k<n-1$. A more general theory along these lines is currently under development by the authors.

\begin{rmk}
Symplectic manifolds admitting a Hamiltonian $T^{n-1}$-action, and more generally a Hamiltonian $T^k$-action, are known, under suitable additional conditions, as complexity-one and complexity-$(n-k)$ spaces. These have been widely studied in several works, for instance, \cite{KarshonTolman2001} and \cite{KarshonTolman2014}. The relation between these spaces and Lagrangian fibrations have been studied in \cite{HohlochKarshon2021} and the references therein. We emphasize though, that the manifolds studied in this note are manifolds with boundary, so special care should be taken when producing the desired singular Lagrangian fibrations. This is the role of the foliation by the curves $\gamma_s$ above.
\end{rmk}

\subsection{Singular Lagrangian fibration constructions} \label{sec: rescslf}

The following results in the construction of singular Lagrangian fibrations are direct applications of our technique exposed in Proposition~\ref{generalmeth2}.

Let \(S\subset\mathbb R^3\) be a sphere of revolution. We write
\begin{equation}\label{def: surf_rev}
    S=
\left\{
(x,y,z)\in\mathbb R^3\mid x^2+y^2=u(z)^2,\ z\in[a,b]
\right\},
\end{equation}
where \(u\colon [a,b]\to\mathbb R_{\geq 0}\) is continuous, smooth on
\((a,b)\), satisfies \(u(z)>0\) for \(z\in(a,b)\), and \(u(a)=u(b)=0\).
We define the \emph{south pole} and \emph{north pole} of \(S\) by
\[
P_S=(0,0,a),\qquad P_N=(0,0,b),
\]
respectively.

We impose the following regularity condition at the endpoints. The function
\[
v(z):=u(z)^2
\]
extends to a smooth function in a neighborhood of \([a,b]\), with
\[
v(a)=v(b)=0,\qquad v(z)>0 \text{ for } z\in(a,b),
\]
and
\[
v'(a)>0,\qquad v'(b)<0.
\]
Equivalently, \(v\) has simple zeros at the endpoints. This condition ensures
that \(S\) is smooth at the poles. Indeed, if
\[
F(x,y,z)=x^2+y^2-v(z),
\]
then \(S=F^{-1}(0)\) near the poles, and
\[
dF_{P_S}=-v'(a)\,dz\neq 0,\qquad
dF_{P_N}=-v'(b)\,dz\neq 0.
\]
Thus \(S\) is a smooth surface near \(P_S\) and \(P_N\) by the regular level
set theorem.

Moreover, since
\[
u(z)u'(z)=\frac12 v'(z)
\]
for \(z\in(a,b)\), we have
\[
\lim_{z\to a^+}u(z)u'(z)=\frac12v'(a)\neq 0,
\qquad
\lim_{z\to b^-}u(z)u'(z)=\frac12v'(b)\neq 0.
\]
This is the technical condition needed later for the ambient-coordinate
description of \(T^*S\) in equation~\eqref{eq: T*S} to extend correctly to
the poles. Finally, since
\[
u'(z)=\frac{v'(z)}{2\sqrt{v(z)}},
\]
we also have
\[
\lim_{z\to a^+}u'(z)=+\infty,
\qquad
\lim_{z\to b^-}u'(z)=-\infty.
\]
Thus the profile curve meets the \(z\)-axis orthogonally at the poles.

An \emph{equator} of \(S\) is any circle defined by
\[
C=\{(x,y,z)\in S\mid z=z_0\},
\]
where \(z_0\in(a,b)\) is a critical point of \(u\), i.e. \(u'(z_0)=0\).
Let \(D^*S\) denote the disk cotangent bundle of \(S\), namely the subset of
\(T^*S\) consisting of covectors with norm less than \(1\), where the norm is
induced by the Euclidean metric in the ambient space \(\mathbb R^3\).

\begin{theorem}\label{thm: ATF_surface_revolution}
Let $S\subset \mathbb{R}^3$ be a sphere of revolution with only one equator. Then there exists an almost toric fibration $f:(D^*S,\omega_{can})\to B_S$ where $B_S\subset \R\times \R_{\geq 0}$ with exactly two nodal singularities. 
\end{theorem}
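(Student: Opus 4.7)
The plan is to apply Proposition~\ref{generalmeth2} to the Hamiltonian $S^1$-action on $D^*S$ induced by rotating $S$ around its axis of revolution. First I would set this action up: the generating vector field is $\partial_\theta$, and the moment map on $D^*S$ is the angular momentum $\mu(q,p)=p(\partial_\theta|_q)$. A direct check shows $\mu$ is smooth and proper on $D^*S$, with fixed-point locus equal to the two pole--zero-section points $(P_N,0)$ and $(P_S,0)$; the action is free everywhere else.

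Next I would choose the auxiliary $S^1$-invariant map $\tilde f:D^*S\to \R^2$ demanded by the proposition. A convenient choice, adapted to Clairaut's relation on the surface of revolution, is to pair the pulled-back height $z\circ \pi$ with a second $S^1$-invariant observable such as a normalized radial momentum component, so that the generic fibers of $\tilde f$ are $S^1$-invariant Lagrangian $2$-tori. I would then verify that the action is free on all but the two fibers of $\tilde f$ through the pole points, and that the non-free locus in each such fiber reduces to a single point (which is connected and of codimension $2$ in the fiber). This check uses the standard equivariant normal form for the linearized $S^1$-action near its fixed points.

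The last step is to produce the foliation $\gamma_s$ and assemble the almost toric fibration. For each regular value $\lambda$ of $\mu$, the region $\tilde f(\mu^{-1}(\lambda))$ is foliated by simple closed curves concentric around a chosen interior center; the natural choice is the point representing the equatorial geodesic at angular momentum $\lambda$, which exists whenever $|\lambda|<u(z_0)$ for $z_0$ the unique critical point of $u$. By Proposition~\ref{generalmeth2} this family assembles into an almost toric fibration $(\mu,A):D^*S\to B_S\subset \R\times \R_{\ge 0}$, where $A\ge 0$ is the symplectic area of a disk whose boundary is $\tilde f^{-1}(\gamma_s)\cap \mu^{-1}(\lambda)$ and whose image under $\tilde f$ is bounded by $\gamma_s$. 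Nodal singularities are created by those leaves $\gamma_s$ passing through singular values of $\tilde f$; since $u$ has exactly one critical value, the only such singular values arise from the two orientations of the closed geodesic along the equator, producing exactly two nodal points in $B_S$.

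The principal technical obstacle is verifying that these two distinguished singular configurations give rise to \emph{isolated} focus-focus fibers, as opposed to $1$-parameter families or non-nodal degenerations. This is a local question near the lift of the equator geodesic in each orientation, and the uniqueness of the critical point of $u$ together with the non-vanishing of $\lim_{z\to a^+} u(z)u'(z)$ and $\lim_{z\to b^-} u(z)u'(z)$ supply exactly the non-degeneracy hypotheses needed to identify the local model as focus-focus.
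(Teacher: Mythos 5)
Your overall scaffolding is right: rotation $S^1$-action, angular-momentum moment map, fixed points exactly at the two pole--zero-section points, an $S^1$-invariant auxiliary map feeding into Proposition~\ref{generalmeth2}, the area coordinate $A$, and a foliation by simple closed curves, all mirror the paper. The paper's auxiliary map is very concrete, namely $f(\xi,\eta)=(\xi_3,\eta_3,|\eta|^2)$, and one then verifies that the restricted $f_\lambda$ induces a homeomorphism $\mu^{-1}(\lambda)/S^1 \to X^2_\lambda$; your ``height plus a normalized radial momentum component'' is in the same spirit, though you would still need to write it down precisely and carry out those verifications rather than assert them.

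There is, however, a genuine conceptual error in the part of your argument that is supposed to \emph{locate and count} the nodal fibers, which is the substance of the theorem. You write that the nodal singularities ``arise from the two orientations of the closed geodesic along the equator.'' That is incorrect, and it also contradicts your own second paragraph, where you correctly identify the fixed locus of the $S^1$-action as the two pole--zero-section points. In the construction of Proposition~\ref{generalmeth2}, a singular Lagrangian fiber appears when the leaf $\gamma_{\lambda,s}$ passes through a point where the circle action is not free; with the defining equations $\xi_1^2+\xi_2^2=u(\xi_3)^2$ and $\xi_1\eta_1+\xi_2\eta_2-u(\xi_3)u'(\xi_3)\eta_3=0$, this forces $u(\xi_3)=0$ and $\eta=0$, i.e.\ $\xi_3=a$ or $\xi_3=b$ in the zero section: the poles. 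The two equatorial closed geodesics (the covectors tangent to the equator, one for each orientation) are \emph{regular} orbits of the free $S^1$-action; they correspond to the two corners $\mu=\pm u(0)$ on the boundary of $B_S$, where for $\mu\neq 0$ the leaf $\gamma_{\mu,|\eta|}$ shrinks to a point and one gets an elliptic boundary, not a nodal interior point. Relatedly, your final paragraph invokes the non-vanishing of $\lim_{z\to a^+}u(z)u'(z)$ and $\lim_{z\to b^-}u(z)u'(z)$ as the non-degeneracy input for a ``local question near the lift of the equator geodesic''; those limits are conditions at the poles and are indeed what one uses to control the local model there, which is a further sign that the focus-focus analysis has been attached to the wrong location. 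The correct argument is: both poles have image in $f_0(D^*S\cap\mu^{-1}(0))$, and one chooses the foliation on $X^2_0$ by simple closed curves shrinking to a point and arranged so that no single leaf passes through both pole-images; each of the two leaves through a pole-image then yields one nodal fiber, giving exactly two nodal singularities in the interior of $B_S$ along $\mu=0$.
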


The exact description of the base $B_S$ in Theorem \ref{thm: ATF_surface_revolution} is implicitly given within the proof, and we avoided stating it here to prevent cumbersome expressions. We consider in Proposition \ref{prop: ellipsoids} the case when $S$ is an ellipsoid of revolution; in this case, we will be more explicit with the description of the base $B_S$. 

\begin{rmk}
The construction in Theorem~\ref{thm: ATF_surface_revolution} is closely
related to the classical examples of integrable systems with an \(S^1\)-symmetry,
such as the quadratic spherical pendulum discussed in
\cite[Section~8]{cushman2002sign} and \cite[Chapter~4]{efstathiou2005metamorphoses}.
From the point of view of singularities, the resulting systems belong to the
semitoric class; see also \cite{LeFlochPalmer2024} for related examples of
semitoric families.

The additional feature needed in the present paper is that we work with disk
cotangent bundles, hence with symplectic domains with boundary. Therefore,
besides constructing the integrable system, we must choose the family of
closed curves in the reduced spaces so that the corresponding Lagrangian tori
remain inside the disk cotangent bundle. This produces a
almost-toric fibration adapted to the boundary and allows us to compute the
base region used later in the symplectic embedding applications.
\end{rmk}

Let $a, b, c > 0$. We denote by $\mathcal{E}(a, b, c) \subset \R^3$ the ellipsoid defined by the equation:
$$\frac{x^2}{a^2} + \frac{y^2}{b^2} + \frac{z^2}{c^2} = 1.$$
When the two parameters $a$ and $b$ coincide, we get an ellipsoid of revolution. Up to normalization, we can assume that $a = b = 1$. This is obtained by taking $u(z)=\sqrt{1-z^2/c^2}$ in \eqref{def: surf_rev}. In particular, notice that $S^2 = \mathcal{E}(1, 1, 1)$.

\begin{prop}\label{prop: ellipsoids}
For each $c > 0$, consider the almost toric fibration $f \colon (D^*\mathcal{E}(1,1,c),\, \omega_{\text{can}}) \to B_c$ given by Theorem~\ref{thm: ATF_surface_revolution}. The base region $B_c$ is bounded by the curve $(\mu,\, \mathcal{A}(\mu))$ defined in equation~\eqref{eq:bound_cbd_11c}.
\end{prop}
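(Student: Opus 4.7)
The plan is to specialize Theorem~\ref{thm: ATF_surface_revolution} to the surface of revolution $\mathcal{E}(1,1,c)$ and carry out an explicit computation of the moment map and the area function for the profile $u(z)=\sqrt{1-z^2/c^2}$ on $[-c,c]$. First I would verify that $\mathcal{E}(1,1,c)$ falls under the hypotheses of Theorem~\ref{thm: ATF_surface_revolution}: $u$ is smooth and positive on $(-c,c)$ with $u(\pm c)=0$; $u'(z)=-z/(c^2 u(z))$ has the required infinite one-sided limits at the poles; the product $u(z)u'(z)=-z/c^2$ extends continuously to $\mp 1/c\neq 0$ at $z=\pm c$; and the unique critical point $z=0$ corresponds to the single equator. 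Hence Theorem~\ref{thm: ATF_surface_revolution} applies and yields an almost toric fibration $f\colon (D^*\mathcal{E}(1,1,c),\omega_{\mathrm{can}})\to B_c$ with exactly two nodal singularities, both lying over the image of the equator.

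Next I would write the two components of the moment map produced by Proposition~\ref{generalmeth2} explicitly. The first component, $\mu$, is the angular momentum of the rotational $S^1$-action on $D^*\mathcal{E}(1,1,c)$; in coordinates $(z,\theta,p_z,p_\theta)$ adapted to the surface of revolution it is just $\mu=p_\theta$. The disk condition $\lVert p\rVert<1$ together with the induced metric on $\mathcal{E}(1,1,c)$ (where the length element satisfies $ds^2 = (1+(u')^2)\,dz^2 + u^2\,d\theta^2$) constrains $|\mu|$ to lie below $\max_{z}u(z)=1$. The second ingredient is the auxiliary $T^{n-1}$-invariant function $f$ from the general construction; for a surface of revolution this is the natural energy / kinetic integral on $T^*\mathcal{E}(1,1,c)$, and the closed curves $\gamma_s$ in $f(D^*\mathcal{E}(1,1,c))\cap\mu^{-1}(\lambda)$ are the level sets of this energy at fixed angular momentum.

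The central step is the computation of the area function $\mathcal{A}(\mu)$. Fixing a regular value of $\mu$, I would identify the disk in $D^*\mathcal{E}(1,1,c)$ whose boundary lies on the Lagrangian torus $f^{-1}(\gamma_s)\cap\mu^{-1}(\mu)$ and whose projection under $f$ is the closed region bounded by $\gamma_s$, and then evaluate $\int \omega_{\mathrm{can}}$ on it as an integral over $z$ in the turning-point interval $\{z:u(z)\geq|\mu|\}$. For $u(z)=\sqrt{1-z^2/c^2}$ this reduces to a one-dimensional integral involving $\sqrt{1+(u')^2}$ and $\sqrt{u(z)^2-\mu^2}$, which can be written down explicitly and corresponds to the curve $(\mu,\mathcal{A}(\mu))$ in equation~\eqref{eq: bound_cbd_11c}. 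Varying the boundary curve $\gamma_s$ from the boundary of $f(D^*\mathcal{E}(1,1,c))\cap\mu^{-1}(\mu)$ inward sweeps out the second coordinate of the base, so the region $B_c$ is exactly the set bounded from above by $(\mu,\mathcal{A}(\mu))$.

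The main obstacle is handling the two limiting regimes carefully: as $|\mu|\to 1$ the turning-point interval collapses, forcing the torus to degenerate to an elliptic orbit, while as $\mu\to 0$ the curves $\gamma_s$ cross the two singular values of $f$ that sit over the equator, producing the two nodal singularities predicted by Theorem~\ref{thm: ATF_surface_revolution}. Checking that the integral defining $\mathcal{A}(\mu)$ extends continuously across these regimes—and that the resulting boundary curve meets the $\mu$-axis at the endpoints with the correct slope so that $B_c$ is genuinely the base of an almost toric fibration with two nodes—pins down the description of $B_c$ and completes the identification with equation~\eqref{eq: bound_cbd_11c}.
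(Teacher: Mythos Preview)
Your outline follows the right overall strategy---specialize the area formula from Theorem~\ref{thm: ATF_surface_revolution} to the profile $u(\xi_3)=\sqrt{1-\xi_3^2/c^2}$---but the actual content of Proposition~\ref{prop: ellipsoids} is the explicit closed form in equation~\eqref{eq: bound_cbd_11c}, and your proposal never derives it. You write that the area integral ``can be written down explicitly and corresponds to the curve $(\mu,\mathcal{A}(\mu))$ in equation~\eqref{eq: bound_cbd_11c}''; this is precisely the statement to be proved, not a proof. What the paper does, and what you are missing, is: plug the specific $u$ into the integral~\eqref{eq: splitting}, determine the turning points $\xi_{3,\min}^{\eta,\mu}=-c\sqrt{(|\eta|^2-\mu^2)/|\eta|^2}$ and $\xi_{3,\max}^{\eta,\mu}$, perform the trigonometric substitution $\xi_3=c\sqrt{(|\eta|^2-\mu^2)/|\eta|^2}\cos\theta$, and then split the resulting integrand into three pieces that are recognized as the complete elliptic integrals $E$, $F$, and $\Pi$. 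None of these steps is automatic, and the elliptic-integral identification is the whole point of the proposition.

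There are also two factual slips in your setup. First, the $S^1$-invariant map $f$ used in Proposition~\ref{generalmeth2} is not ``the natural energy/kinetic integral''; in the paper it is the explicit map $f(\xi,\eta)=(\xi_3,\eta_3,|\eta|^2)$, and the foliating curves $\gamma_{\mu,|\eta|}$ are the level sets of $|\eta|$ inside $X^2_\lambda$, not energy level sets. Second, the two singular points of $f_\lambda$ at $\mu=0$ do not ``sit over the equator'': they are the images of the fixed points of the circle action, which are the \emph{poles} $\xi_3=\pm c$ (where $u=0$), not the equator $\xi_3=0$. These errors do not affect the boundary curve you are after, but they would lead you astray if you tried to locate the nodes or analyze the $\mu\to 0$ limit as you propose in your last paragraph.
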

\begin{figure}[h!]
    \centering
    \begin{subfigure}[b]{0.32\textwidth}
        \centering
        \includegraphics[width=\textwidth]{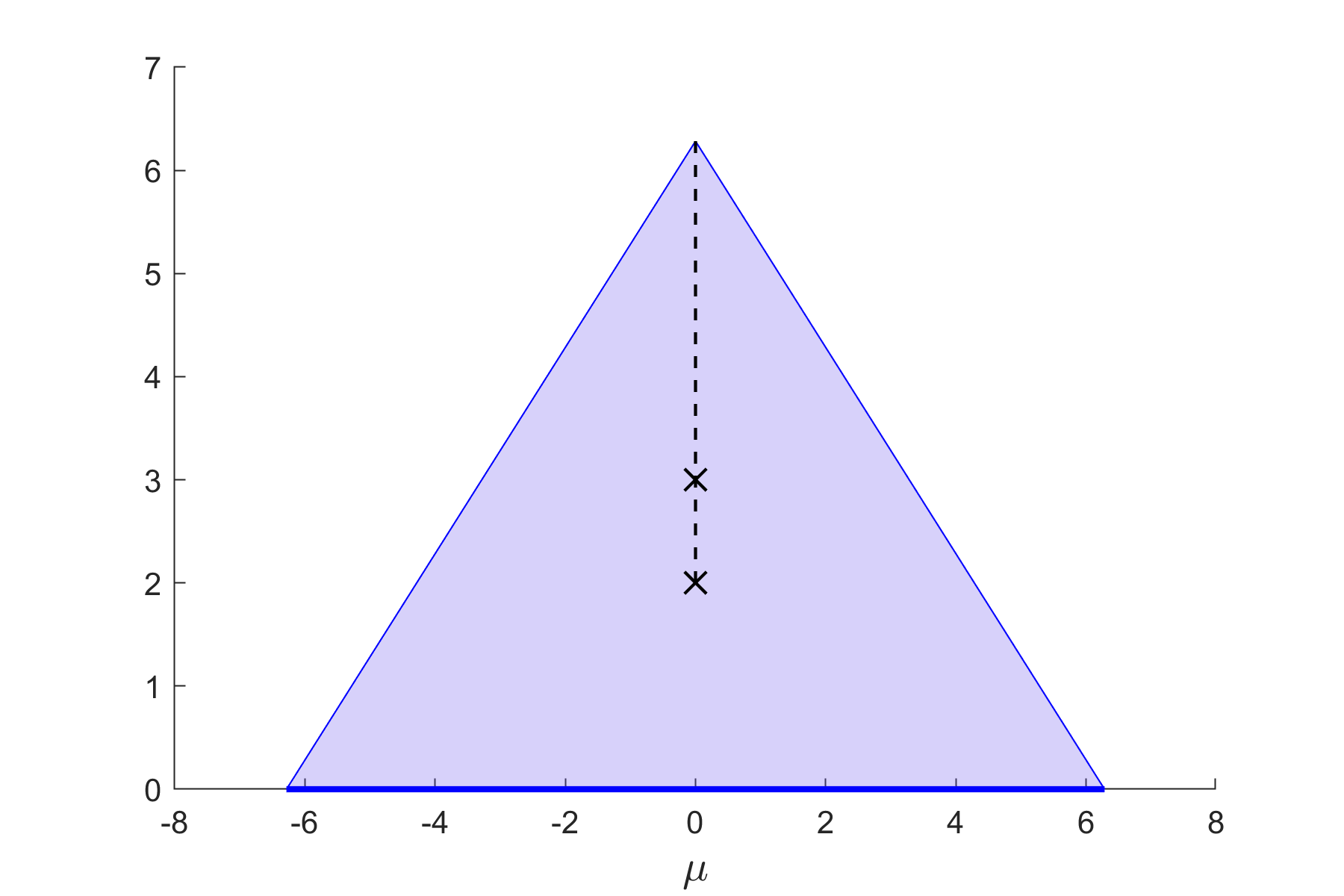}
        \caption{$c=1$}
        \label{fig:convex_111_2}
    \end{subfigure}
    \hfill
    \begin{subfigure}[b]{0.32\textwidth}
        \centering
        \includegraphics[width=\textwidth]{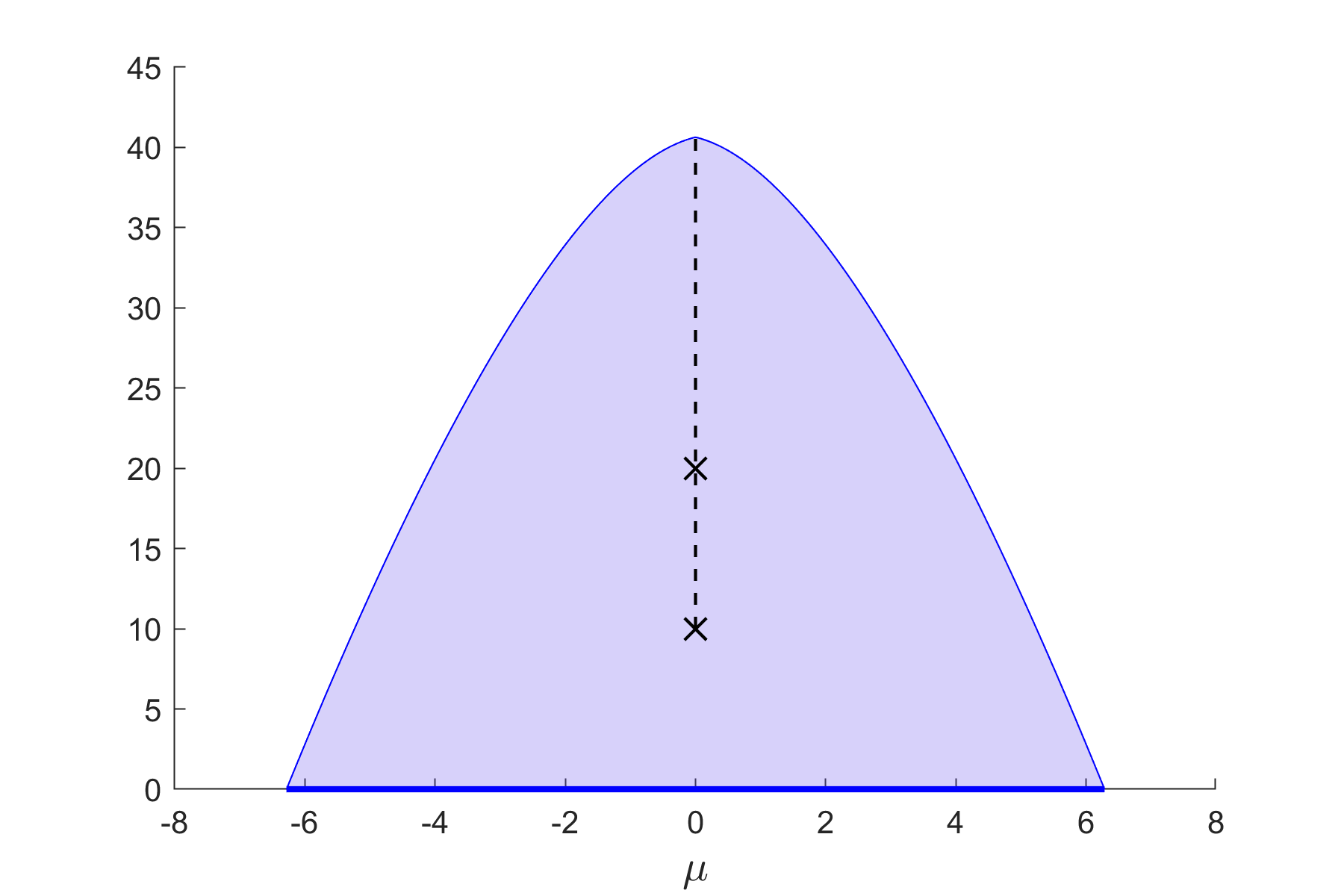}
        \caption{$c=10$}
        \label{fig:convex_112_2}
    \end{subfigure}
    \hfill
    \begin{subfigure}[b]{0.32\textwidth}
        \centering
        \includegraphics[width=\textwidth]{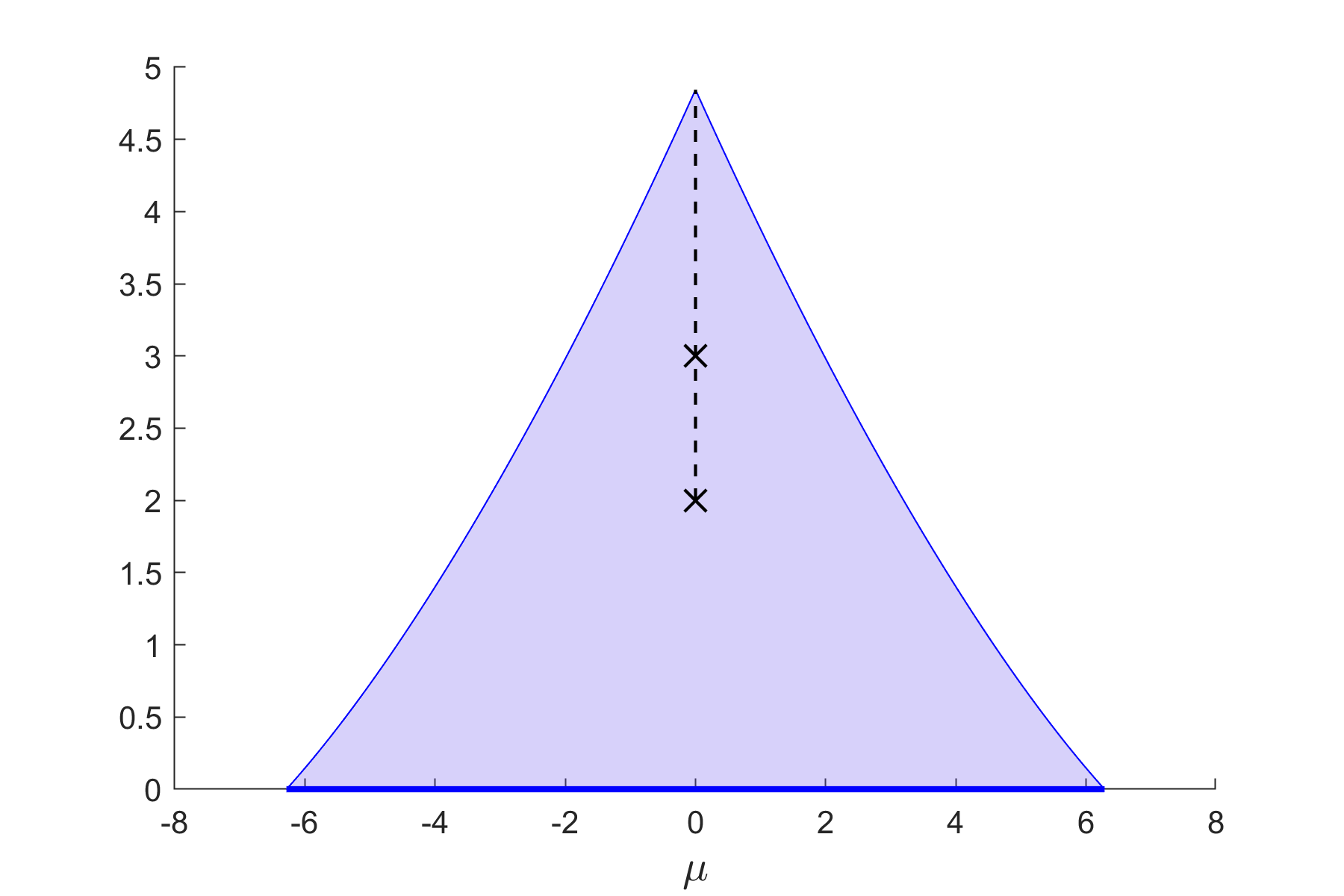}
        \caption{$c=0.001$}
        \label{fig:convex_11001}
    \end{subfigure}
    \caption{Base diagram for the almost toric fibration of Proposition \ref{prop: ellipsoids} on $D^*\mathcal{E}(1,1,c)$.}
    \label{fig:convex_base_11c_2}
\end{figure}

From the fibration obtained in Proposition~\ref{prop: ellipsoids}, we can
recover the toric domains obtained in \cite{Ferreira2023GromovWO} as follows.
The first step is a transferring-the-cut operation applied to the base diagrams
shown in Figure~\ref{fig:convex_base_11c_2}. This operation changes the
branch cut at the node: equivalently, we rotate the branch cut by \(180^\circ\)
so that it points in the opposite direction. The corresponding integral-affine change of chart is implemented by applying the matrix
\[
\begin{pmatrix}
1 & 0 \\
-1 & 1
\end{pmatrix}
\]
to the half-plane \(\mu\leq 0\). After this change of branch cut, we apply an
appropriate \(SL(2,\mathbb Z)\)-transformation to obtain the desired diagram;
see Figure~\ref{fig:convex_base_11c_2m}. One can perform an inverse nodal trade at the node with branch-cut emanating from the origin and a nodal slide at the second node, pushing the node all the way to the boundary. In the limit, we obtain a Lagrangian disk over the limiting point, which can be taken to be the cotangent fibre over one of the poles of the surface of revolution, agreeing with the case studied in \cite{Ferreira2023GromovWO}.

\begin{figure}[h!]
\centering
    \begin{tikzpicture}
        \node (left) at (0,0) {\includegraphics[scale=0.5]{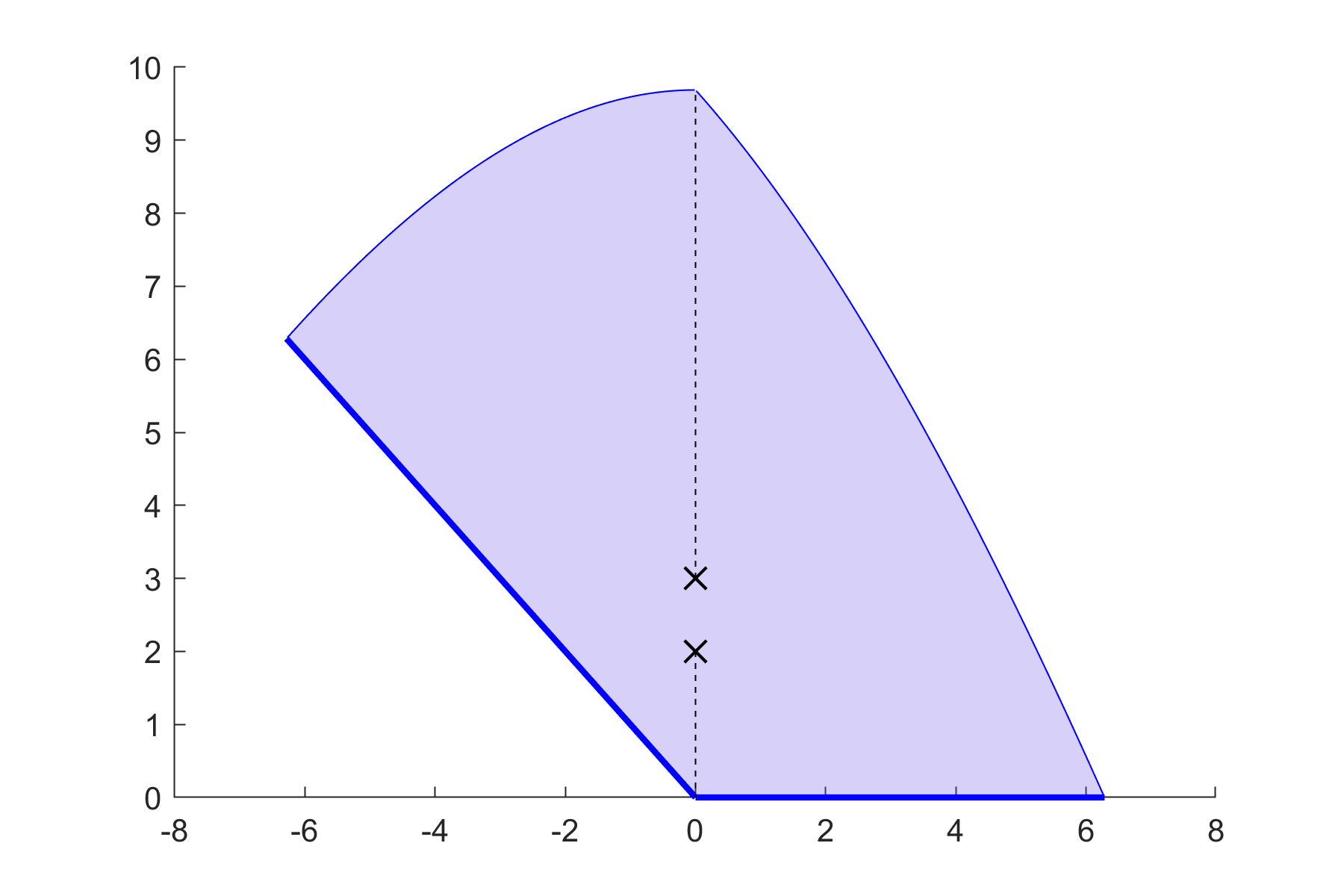}};
        \node (right) at (8.5,0) {\includegraphics[scale=0.5]{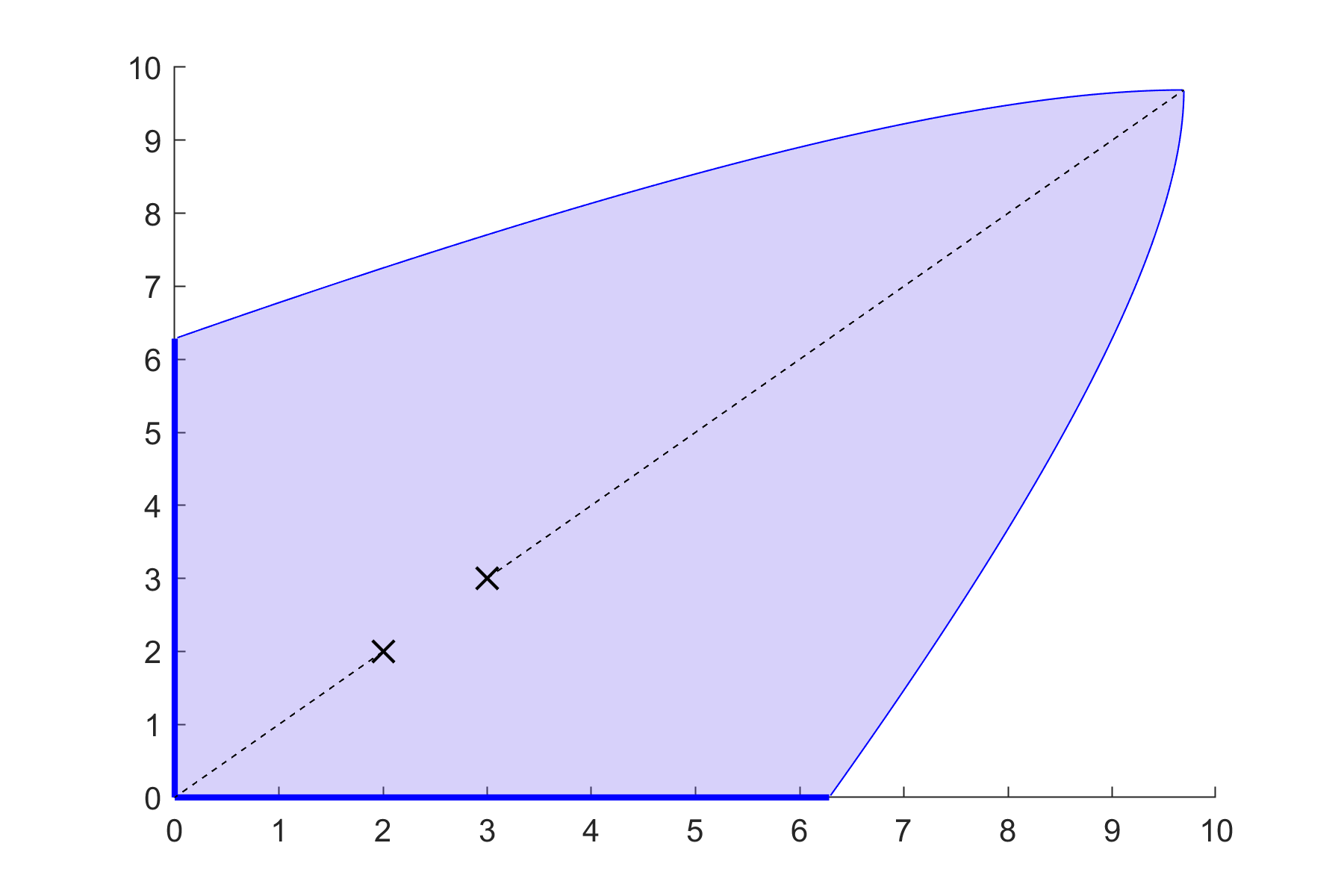}};
        
        \draw[-{Latex[scale=1.5]}, thick] (3,0) -- (5,0);

        \node at (4,0.7) {$\begin{pmatrix} 1 & 1 \\ 0 & 1 \end{pmatrix}$};
    \end{tikzpicture}
\caption{Base diagram for the almost toric fibration on $D^*\mathcal{E}(1,1,2)$ after a transferring-the-cut operation.}
\label{fig:convex_base_11c_2m}
\end{figure}

\begin{rmk}
Using the same argument, we can also obtain toric domains from the fibrations in Theorem \ref{thm: ATF_surface_revolution}. Furthermore, these toric domains agree with those obtained in \cite{Ferreira2023GromovWO}.
\end{rmk}

We can also consider the 3-dimensional ellipsoid as follows. Let $a,b,c,d>0$, we denote by $\mathcal{E}(a,b,c,d)\subset \R^4$ the surface defined by the equation:
$$\frac{x^2}{a^2}+\frac{y^2}{b^2}+\frac{z^2}{c^2}+\frac{w^2}{d^2}=1.$$

It turns out that using the same strategy, we can construct singular Lagrangian fibrations on the disk cotangent bundle of certain higher-dimensional ellipsoids, namely those with enough symmetries, as the next theorem shows.

\begin{theorem} \label{thm: highellips}
For each $c>0$, there exists a singular Lagrangian fibration $f:(D^*\mathcal{E}(1,1,c,c)),\omega_{can})\to A_c$, where the region $A_c$ is bounded by the surface $(\mu_1,\mu_2,\mathcal{A}(\mu_1,\mu_2))$, see equation \eqref{eq: boundary_11cc}.
\end{theorem}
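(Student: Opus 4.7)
The plan is to apply the general construction of Proposition~\ref{generalmeth2} with $n = 3$. Writing $\mathcal{E} := \mathcal{E}(1,1,c,c) = \{x^2 + y^2 + (z^2+w^2)/c^2 = 1\} \subset \mathbb{R}^4$, the independent rotations of the planes $(x,y)$ and $(z,w)$ give an isometric $T^2$-action which lifts to a Hamiltonian $T^2$-action on $D^*\mathcal{E}$ with moment map $\mu = (\mu_1, \mu_2)$, where $\mu_j$ is the angular momentum conjugate to the $j$-th rotation. I will introduce coordinates $(\phi, \theta_1, \theta_2) \in [0, \pi/2] \times (S^1)^2$ on $\mathcal{E}$ via $x+iy = \cos\phi\, e^{i\theta_1}$ and $z+iw = c\sin\phi\, e^{i\theta_2}$, in which a direct computation gives the induced metric
\[
g \;=\; (\sin^2\phi + c^2\cos^2\phi)\,d\phi^2 + \cos^2\phi\,d\theta_1^2 + c^2\sin^2\phi\,d\theta_2^2,
\]
and hence the cometric Hamiltonian
\[
H \;=\; \tfrac{1}{2}\!\left(\tfrac{p_\phi^2}{\sin^2\phi + c^2\cos^2\phi} + \tfrac{\mu_1^2}{\cos^2\phi} + \tfrac{\mu_2^2}{c^2\sin^2\phi}\right).
\]

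For the auxiliary $T^2$-invariant map I will take $f = (\tilde\phi, H) \colon D^*\mathcal{E} \to \mathbb{R}^2$, where $\tilde\phi$ is the pullback of $\phi$ by the bundle projection $D^*\mathcal{E} \to \mathcal{E}$. On a regular level $\mu^{-1}(\lambda)$ with $\lambda = (\lambda_1, \lambda_2)$ of both components nonzero, the image $f(\mu^{-1}(\lambda))$ is the compact region bounded above by $\{H = 1/2\}$ and below by the graph of the effective potential $\tfrac{1}{2}V_\lambda(\phi)$, where $V_\lambda(\phi) = \tfrac{\lambda_1^2}{\cos^2\phi} + \tfrac{\lambda_2^2}{c^2\sin^2\phi}$. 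Since $V_\lambda$ is strictly convex with a unique minimum at some $\phi_\ast(\lambda) \in (0, \pi/2)$, this region is a ``bowl'' which I will foliate by nested simple closed curves $\gamma_s$ around the point $(\phi_\ast(\lambda), \tfrac{1}{2}V_\lambda(\phi_\ast))$.

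Applying Proposition~\ref{generalmeth2} to this data, the preimages $f^{-1}(\gamma_s) \cap \mu^{-1}(\lambda)$ are Lagrangian $3$-tori in $D^*\mathcal{E}$, and the third component $\mathcal{A}$ of the moment map is defined as the symplectic area of a disk bounded by such a torus whose $f$-projection equals the closed region enclosed by $\gamma_s$. The resulting map $(\mu_1, \mu_2, \mathcal{A}) \colon D^*\mathcal{E} \to \mathbb{R}^3$ is the desired singular Lagrangian fibration, with image the region $A_c$.

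The main obstacles will be, first, verifying the codimension-$2$ hypothesis of Proposition~\ref{generalmeth2} at the non-generic moment levels $\lambda_1 = 0$ or $\lambda_2 = 0$, which correspond to geodesics that degenerate into the circles $\phi = 0$ or $\phi = \pi/2$ and produce the singular Lagrangian fibers; and second, computing the boundary function $\mathcal{A}(\mu_1,\mu_2)$ explicitly as an action integral along the outermost loop $\gamma_s$ touching $\{H = 1/2\}$, which yields the parametrization recorded in equation~\eqref{eq: boundary_11cc}.
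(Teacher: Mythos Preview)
Your overall strategy---lift the $T^2$ of rotations in the $(x,y)$ and $(z,w)$ planes, reduce, and foliate the reduced space by level sets of the kinetic energy---is exactly the route the paper takes. However, your specific choice of auxiliary map $f=(\tilde\phi,H)$ does \emph{not} satisfy hypothesis~(iii) of Proposition~\ref{generalmeth2}. On a regular level $\mu^{-1}(\lambda)$ the $T^2$-quotient is parametrized by $(\phi,p_\phi)$, and since $H$ depends only on $p_\phi^2$, the induced map $\widetilde f_\lambda\colon (\phi,p_\phi)\mapsto(\phi,H)$ is generically $2$-to-$1$. Consequently, a nested closed curve $\gamma_s$ drawn in the interior of your ``bowl'' lifts under $\widetilde f_\lambda^{-1}$ to \emph{two} disjoint circles in the reduced space, and $f^{-1}(\gamma_s)\cap\mu^{-1}(\lambda)$ is a disjoint union of two $3$-tori rather than one. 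The Lagrangian fibration you describe therefore has disconnected fibres and is not the object claimed.

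The fix is simple: take the quotient map itself, e.g.\ $f_\lambda=(\tilde\phi,p_\phi)$, so that $\widetilde f_\lambda$ is a diffeomorphism onto the region $\{\tfrac12 V_\lambda(\phi)+\tfrac12 p_\phi^2/(\sin^2\phi+c^2\cos^2\phi)\le \tfrac12\}$ in the $(\phi,p_\phi)$-plane, and then foliate \emph{that} region by the honest closed ovals $\{H=E\}$. This is precisely what the paper does in ambient $\mathbb{C}^4$ coordinates: their map $f=(t_1,t_2,|\eta|^2)$ with $t_1=(\xi_1^2+\xi_2^2)-(\eta_1^2+\eta_2^2)$ and $t_2=2(\xi_1\eta_1+\xi_2\eta_2)$ is chosen so that $(t_1,t_2)$ separates $T^2$-orbits (the component $t_2$ is odd in the momenta, recovering the sign of $p_\phi$ that your $H$ loses), and the foliation is again by $|\eta|^2=2H$. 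Once you make this correction, your intrinsic-coordinate argument and the paper's ambient-coordinate argument are the same proof; your coordinates would in fact make the action integral $\mathcal A(\mu_1,\mu_2)=\oint p_\phi\,d\phi$ on the $H=\tfrac12$ oval slightly more transparent before it is reduced to the elliptic-integral expression~\eqref{eq: boundary_11cc}.
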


\begin{figure}[h!]
    \centering
    \begin{subfigure}[b]{0.55\textwidth}
        \centering
        \includegraphics[width=\textwidth]{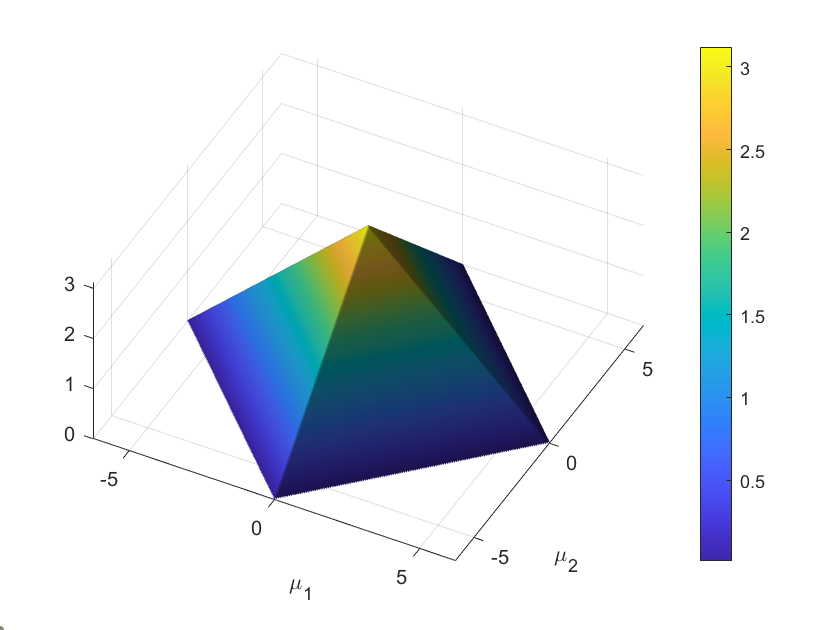}
        \caption{$c=1$}
        \label{fig:convex_1111}
    \end{subfigure}
    \hfill
    \begin{subfigure}[b]{0.55\textwidth}
        \centering
        \includegraphics[width=\textwidth]{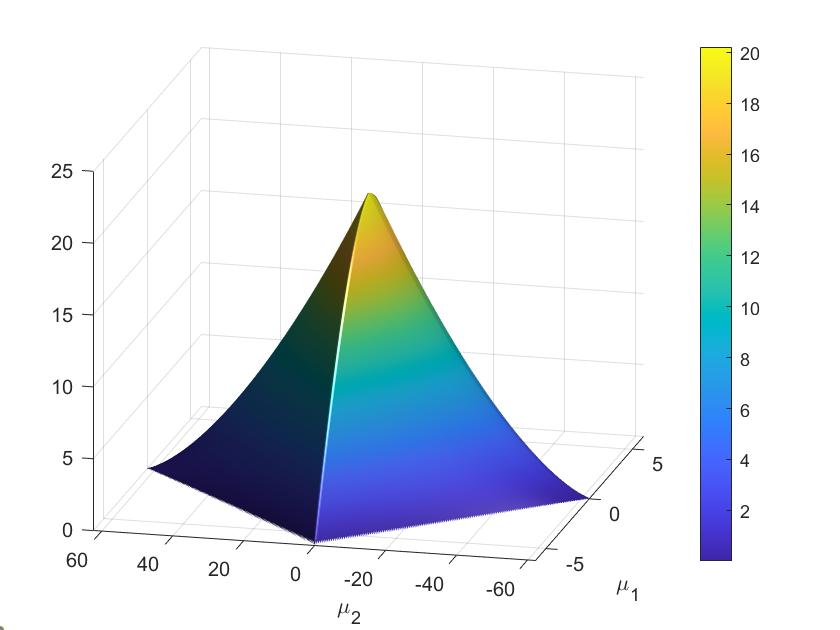}
        \caption{$c=10$}
        \label{fig:convex_1122}
    \end{subfigure}
    \hfill
    \begin{subfigure}[b]{0.55\textwidth}
        \centering
        \includegraphics[width=\textwidth]{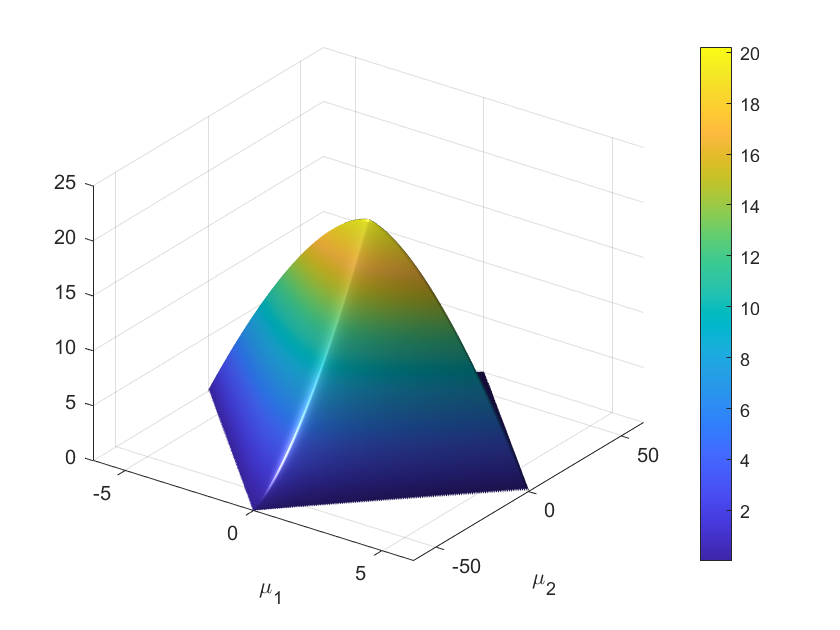}
        \caption{$c=10$}
        \label{fig:convex_1122_2}
    \end{subfigure}
    \caption{Base diagram for the singular Lagrangian fibration of Theorem \ref{thm: highellips} on $D^*\mathcal{E}(1,1,c,c)$. The color scale records the value of the
action coordinate \(\mathcal A\), equivalently the third coordinate of the
displayed base diagram.}
    \label{fig: convex_base_11cc}
\end{figure}

\begin{rmk}
Note that Figure \ref{fig: convex_base_11cc}(c) is a $90^\circ$ rotation of Figure \ref{fig: convex_base_11cc}(b).
\end{rmk}

Using the same strategy, we can also construct an almost toric fibration in the disk cotangent bundle $D^*D^2$ of the 2-dimensional disk $D^2$ of radius 1, also known as the Lagrangian bidisk. We adopt the same notation as in \cite{ramos2017symplectic} and define the Lagrangian bidisk as:
\[P_L=\{(p_1,p_2,q_1,q_2)\in \mathbb{R}^4~|~ p_1^2+p_2^2 \leq 1;~ q_1^2+q_2^2\leq 1 \},\]
with the symplectic form $\omega=\sum dp_j \wedge dq_j$.

\begin{theorem}\label{thm: lagbid}
There exists an almost toric fibration $f:(P_L,\omega)\to B_{P_L}$, where the region $B_{P_L}$ is bounded by the curve $(\mu,\mathcal{A}(\mu))$, see equation \eqref{eq: bound_lagbi}.
\end{theorem}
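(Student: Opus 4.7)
The plan is to apply Proposition~\ref{generalmeth2} with $M = D^2$, so that $n = 2$ and $T^{n-1} = S^1$. The relevant Hamiltonian $S^1$-action on $P_L$ is the diagonal rotation $\theta \cdot (p, q) = (R_\theta p,\, R_\theta q)$, whose moment map is the angular momentum $\mu(p, q) = p_1 q_2 - p_2 q_1$ and whose only fixed point is the origin. For the invariant function $f \colon P_L \to \mathbb{R}^2$ I would take $f(p, q) = (|q|^2,\, \langle p, q \rangle)$, which is smooth and $S^1$-invariant. A short computation using the planar identity $|p|^2 |q|^2 = \langle p, q\rangle^2 + \mu^2$ shows that, for each $\lambda$,
\[
f(\mu^{-1}(\lambda)) = \{(a, b) \in \mathbb{R}^2 : \lambda^2 \leq a \leq 1,\ b^2 \leq a - \lambda^2\},
\]
a topological $2$-disk bounded by an arc of the parabola $b^2 = a - \lambda^2$ and the vertical segment $a = 1$. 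This region admits a natural foliation $\gamma_s$ by simple closed curves, beginning at the boundary loop and shrinking to an interior point.

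Next, I would verify the remaining hypotheses of Proposition~\ref{generalmeth2}: the $S^1$-action is free away from its unique fixed point $p = q = 0$, which sits inside the single fibre $f^{-1}(0, 0) \cap \mu^{-1}(0)$, namely the $2$-disk $\{(p, 0) : |p| \leq 1\}$, on which the non-free locus is the isolated point $p = q = 0$ of codimension $2$. Applying the proposition then yields an almost toric fibration $(\mu,\, \mathcal{A}) \colon P_L \to B_{P_L} \subset \mathbb{R} \times \mathbb{R}_{\geq 0}$ in which the curves $\gamma_s$ that pass through critical values of $f|_{\mu^{-1}(\lambda)}$ produce nodal focus--focus singularities in the base, while the shrinking curves contribute the usual elliptic singularities.

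The remaining step, where the main computation lies, is to derive the explicit formula~\eqref{eq: bound_lagbi} for the boundary curve of $B_{P_L}$. For fixed $\lambda$, the value $\mathcal{A}(\lambda)$ is the symplectic area of the disk whose boundary is the outermost torus in $\mu^{-1}(\lambda)$ — namely the one tracing the boundary of $f(\mu^{-1}(\lambda))$ displayed above. By $S^1$-equivariance together with Stokes' theorem, this symplectic area reduces to a one-variable integral of the tautological $1$-form $p_1\,dq_1 + p_2\,dq_2$ along a loop in the quotient $P_L / S^1$. The main technical obstacle is to split this loop into its contributions coming from the two boundary strata $|p| = 1$ and $|q| = 1$ of $P_L$ and to evaluate the resulting integral in closed form; doing so produces the explicit expression for $\mathcal{A}(\mu)$ and completes the proof.
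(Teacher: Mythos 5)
Your overall scaffolding matches the paper's: identify the diagonal $S^1$-action with moment map $\mu = p_1q_2 - p_2q_1$, pick an $S^1$-invariant map $f$ to $\mathbb{R}^2$, lift curves in the image of $f_\lambda$ to Lagrangian tori, and extract the boundary curve via Stokes' theorem. But your choice of $f$ is flawed in a way that breaks hypothesis~(iii) of Proposition~\ref{generalmeth2}.

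Take $f(p,q) = (|q|^2, \langle p,q\rangle)$ and set $\lambda = 0$. The fibre $f^{-1}(0,0)\cap\mu^{-1}(0)$ is, as you note, the entire $2$-disk $\{(p,0) : |p|\le 1\}$. Its $S^1$-quotient is a $1$-dimensional interval, parametrised by $|p|$, yet $\widetilde f_0$ sends this whole interval to the single boundary vertex $(0,0)$ of your parabolic region. So $\widetilde f_0 : \mu^{-1}(0)/S^1 \to X^2_0$ is not injective, let alone a homeomorphism, and the proposition's construction --- which lifts curves $\gamma_{\lambda,s}$ through $\widetilde f_\lambda^{-1}$ --- cannot even be applied. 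Concretely: any foliation of $X^2_0$ by simple closed curves retracting to an interior point never meets the vertex $(0,0)$, so the fibres of your candidate fibration would miss the interior $2$-disk $\{(p,0):|p|<1\}$ altogether. The non-free locus (the origin $p=q=0$) being codimension $2$ inside that $2$-disk is irrelevant; the problem is that the \emph{free} orbits with $q=0$, $0<|p|<1$, all collapse to one image point.

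The paper sidesteps this precisely by using $f(p,q) = (p_1q_1 + p_2q_2,\ |p|^2 - |q|^2)$, i.e.\ adding the coordinate $t_2 = |p|^2 - |q|^2$. On $\{q=0\}$ this gives $f(p,0) = (0, |p|^2)$, so the image of the disk $\{q=0\}$ is the segment $\{0\}\times[0,1]$ and $\widetilde f_0$ distinguishes orbits by $|p|$. Your map records only $|q|^2$ and $\langle p, q\rangle$, and both degenerate simultaneously on $q=0$. Note also that your $f$ is asymmetric under $p\leftrightarrow q$; the paper's $f$ respects the bidisk's $p\leftrightarrow q$ symmetry up to $t_2\mapsto -t_2$, which is another hint that the extra coordinate is the right one. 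If you replace your $f$ with the paper's (equivalently, append $|p|^2-|q|^2$ to your data and drop the redundant $|q|^2$), your outline --- verifying simple connectivity, constructing the foliation, and evaluating the boundary area integral over the two strata $|p|=1$ and $|q|=1$ --- lines up with what the paper does.
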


\begin{figure}[h!]
    \centering
    \includegraphics[width=0.7\textwidth]{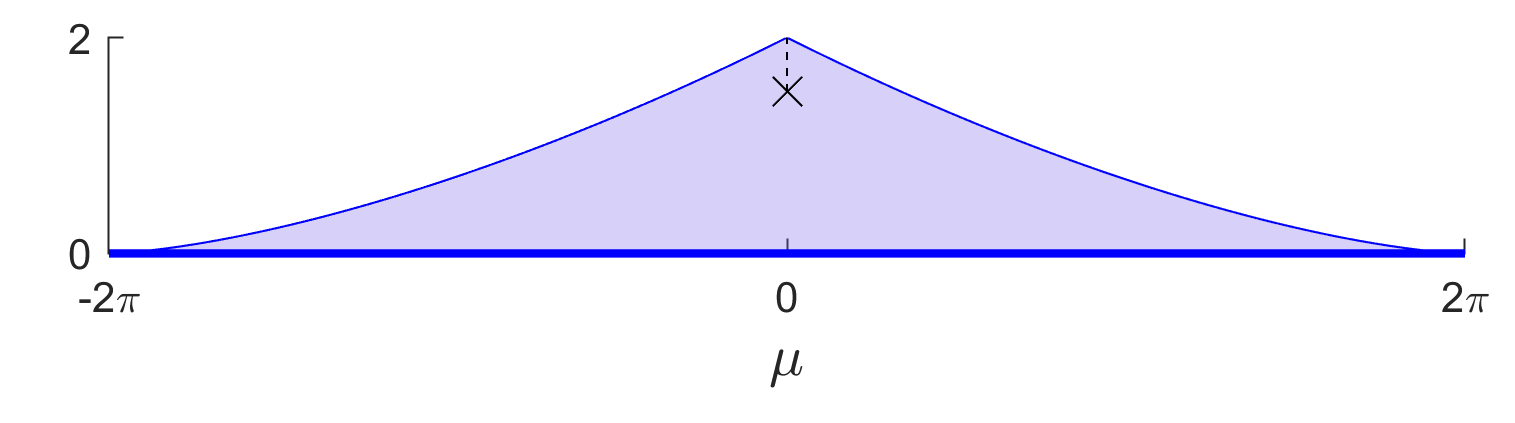} 
\caption{Base diagram for the almost toric fibration of Theorem \ref{thm: lagbid} on the Lagrangian Bidisk.}
\label{fig:convex_base_lagbi}
\end{figure}

\begin{rmk}\label{rmk: bidisk}
In a future project, we will use similar ideas to construct singular Lagrangian fibrations on higher-dimensional Lagrangian bidisks. The proof relies on constructing singular Lagrangian fibrations like those in this note, though the construction technique is more involved, since we will not start from the assumption of a $T^{n-1}$-Hamiltonian action.
\end{rmk}

From the fibration obtained in Theorem~\ref{thm: lagbid}, we can recover the
toric domain produced in \cite{ramos2017symplectic} as follows. First, we
perform a transferring-the-cut operation on the base diagram. This operation
changes the branch cut at the node; equivalently, we rotate the branch cut by
\(180^\circ\) so that it points in the opposite direction. The corresponding
integral-affine change of chart is implemented by applying the shear
\[
\begin{pmatrix}
	1 & 0 \\
	-1 & 1
\end{pmatrix}
\]
to the half-plane \(\mu \leq 0\). After this change of branch cut, we apply an
\(SL(2,\mathbb Z)\)-transformation to obtain the desired figure; see
Figure~\ref{fig:convex_base_lagbi_2}. We then apply a nodal trade at the
nodal singularity to obtain the toric domain.

\begin{figure}[h!]
\centering
\begin{tikzpicture}
    \node (left) at (0,0) {\includegraphics[width=0.5\linewidth,height=0.525\linewidth,keepaspectratio=false]{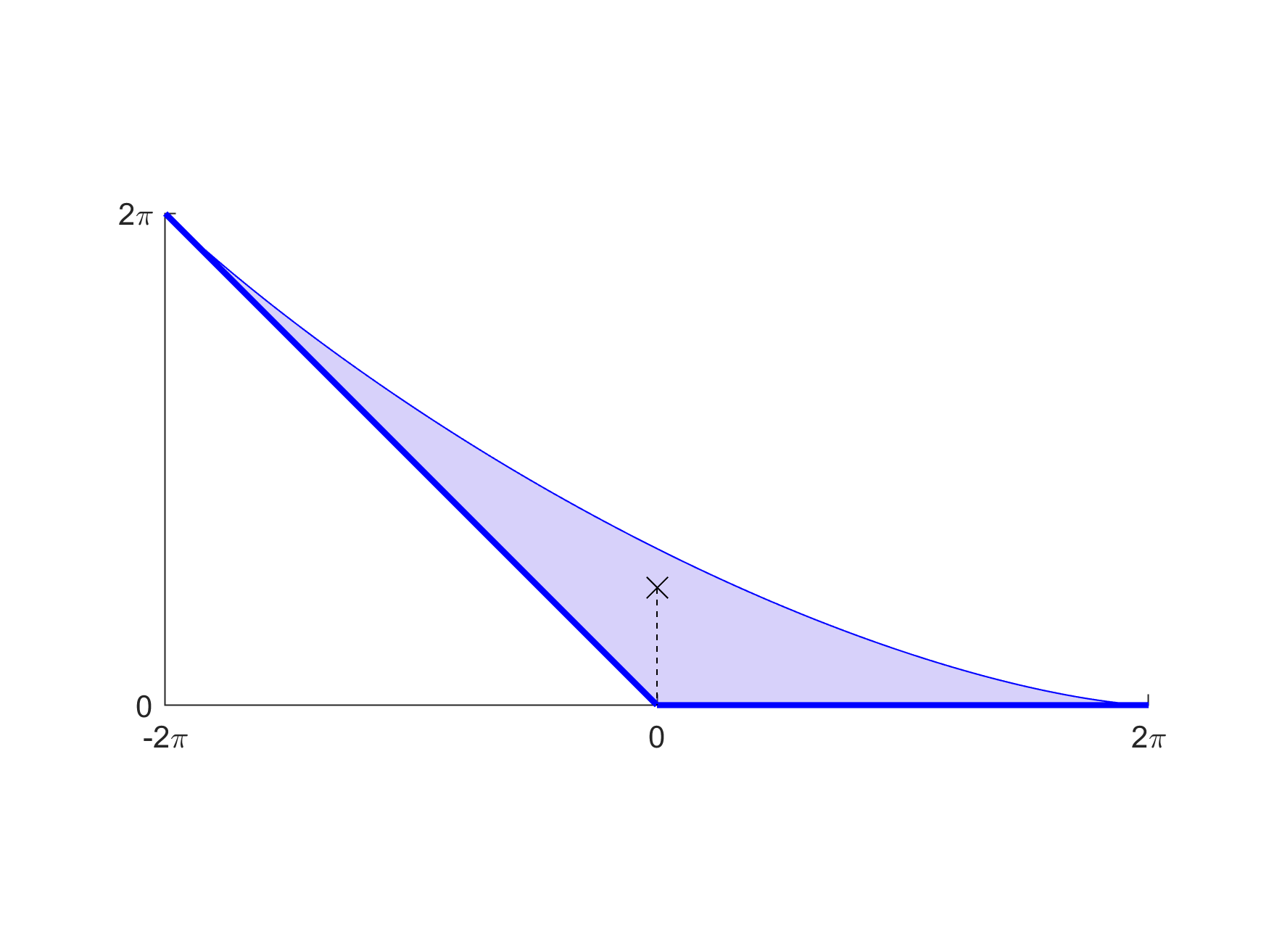}};
    
    \node (right) at (8,-0.05) {\includegraphics[scale=0.5]{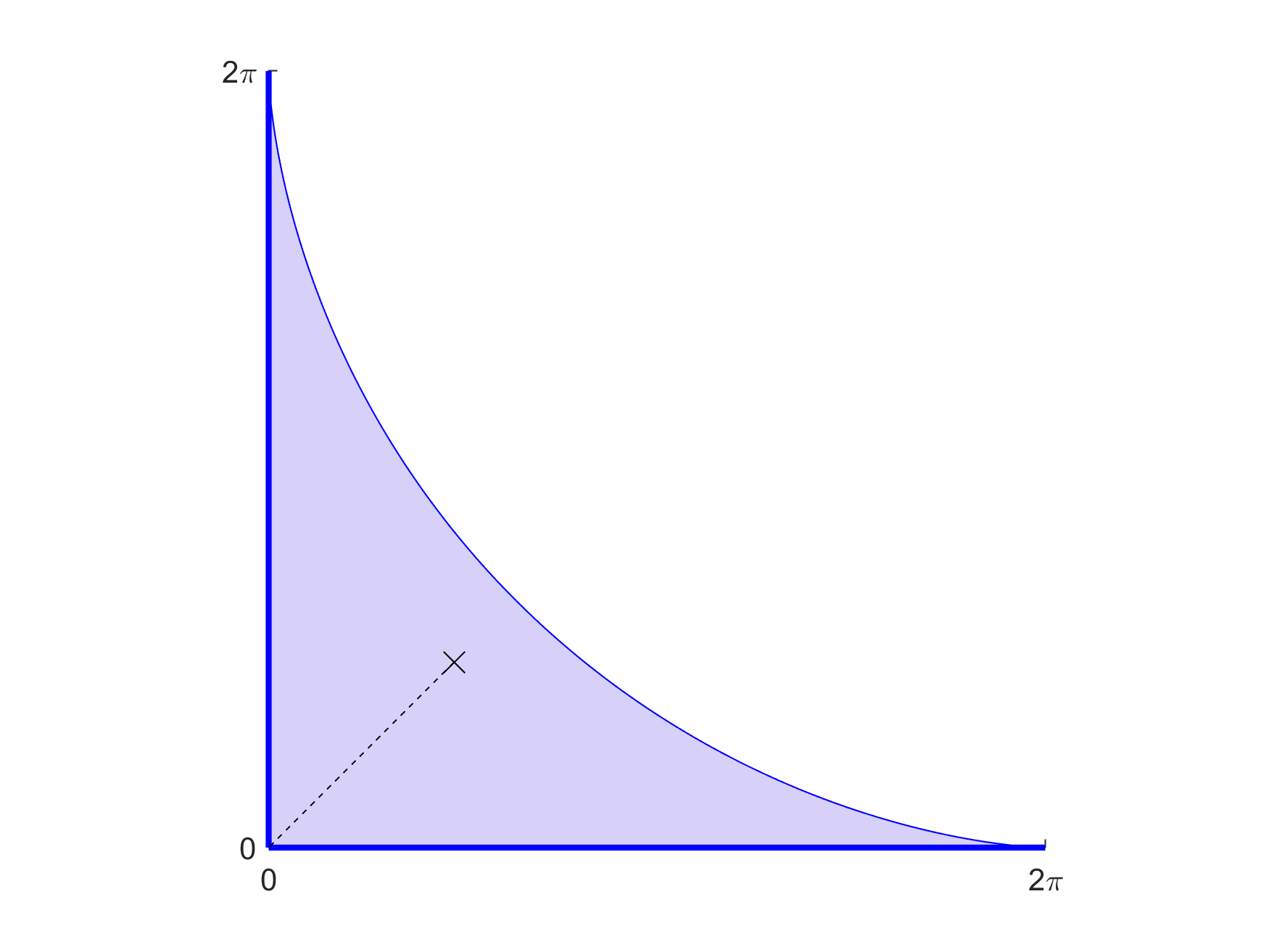}};
    
    \draw[-{Latex[scale=1.5]}, thick] (3,0) -- (5,0);

    \node at (4,0.7) {$\begin{pmatrix} 1 & 1 \\ 0 & 1 \end{pmatrix}$};
\end{tikzpicture}
\caption{Base diagram of the Lagrangian Bidisk after transferring the cut operation.}
\label{fig:convex_base_lagbi_2}
\end{figure}

\begin{rmk}
The almost toric fibrations presented in this section fall within the class of semi-toric almost toric fibrations presented in \cite{LeFlochPalmer2024}, for example.
\end{rmk}

\subsection{Embedding results} \label{sec: embres}

Ever since Gromov's groundbreaking result on the non-squeezing theorem \cite{Gromov1985PseudoHC}, many researchers have devoted extensive research to the study of symplectic embeddings. This is a fast-evolving area of work; for a relatively up-to-date account, see \cite{Schlenk2017SymplecticEP}. Gromov's result marked the beginning of the study of what is now known as the Gromov width of a symplectic manifold $(M,\omega)$, defined as the supremum of $a > 0$ such that there
exists a symplectic embedding $(B^{2n}(a), \omega_0) \to (M, \omega)$, where
$$B^{2n}(a)=\left\{(x_1,y_1,\ldots,x_n,y_n)\in \mathbb{R}^{2n}\mid \sum_{i=1}^n(x_i^2+y_i^2)\leq a/\pi\right\},$$
and $\omega_0=\sum_{i=1}^ndx_i\wedge dy_i$.

 The last decade has seen a surge of results on symplectic embedding problems using toric techniques, for example, \cite{ramos2017symplectic}, \cite{Ramos2017OnTR}, \cite{Ferreira2021SymplecticEI},\cite{Ferreira2023GromovWO}, and \cite{Ostrover2023FromLP}. One of the fundamental tools to study lower bounds for the Gromov width, via toric techniques, is the Traynor trick. This trick is basically about constructing a symplectic embedding of a ball into a Lagrangian product of a standard simplex and a cube, having the same volume of the ball (see \cite{Traynor1995SymplecticPC}, Lemma \ref{lem: traynor}). 

We now give a glimpse of how the singular Lagrangian fibrations constructed in the previous section can be used to study symplectic embedding problems into these spaces, in particular, studying the Gromov width.

From Proposition \ref{prop: ellipsoids} and the subsequent observations about obtaining toric domains out of these Lagrangian fibrations, we can see in particular that the symplectic balls embedded into $D^*\mathcal{E}(1,1,c)$ realizing the Gromov width, for $c\leq 1$ as in \cite{Ferreira2023GromovWO}, can also be seen within the picture of our fibration, by looking at the base in Figure \ref{fig:convex_base_11c_2}(c). More specifically, the ball realizing the Gromov width is represented in the
base diagram by the largest right isosceles triangle whose hypotenuse lies on
the horizontal axis and whose interior is contained in the region shown in Figure~\ref{fig:convex_base_11c_2}(c)
(see Figure~\ref{fig:triangle_width_c_leq_1}). Equivalently, this is the largest
isosceles triangle with horizontal base and with base length equal to twice
its height. After the transferring-the-cut operation and the subsequent
\(SL(2,\mathbb Z)\)-transformation, this triangle becomes an affine copy of
the standard simplex, and hence represents a symplectic ball.

\begin{figure}[h!]
    \centering
    \includegraphics[width=0.62\textwidth]{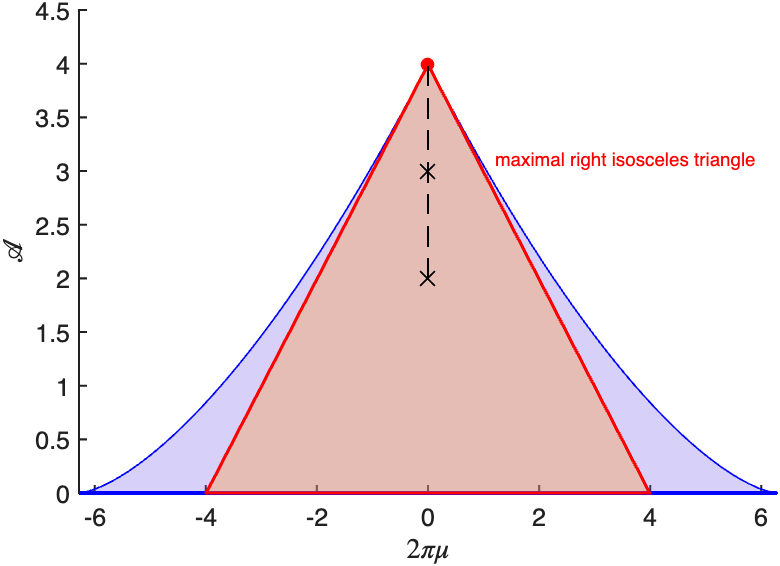}
    \caption{The right isosceles triangle representing the symplectic ball
    embedded in \(D^*\mathcal{E}(1,1,c)\) for \(c\leq 1\). The hypotenuse of the triangle lies on the horizontal axis, and its interior is contained in
    the base region of the almost toric fibration.}    \label{fig:triangle_width_c_leq_1}
\end{figure}

  For $c>1$, the existence of the symplectic ball realizing the Gromov width is proven using embedded contact homology, and it is not given by the inclusion on the toric domain obtained from the fibration in Figure \ref{fig:convex_base_11c_2}(b). This was already done in \cite{Ferreira2023GromovWO}, and as such, we will not return to it in these notes.

Using the fibration in Theorem \ref{thm: highellips} for $c=1$, we can show the following result.

\begin{prop}\label{prop: capacity_S^3}
The Gromov width of $D^*S^3$ is $2\pi$.
\end{prop}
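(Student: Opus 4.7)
My plan is to establish the matching bounds $c_{\mathrm{Gr}}(D^*S^3)\geq 2\pi$ and $c_{\mathrm{Gr}}(D^*S^3)\leq 2\pi$ separately. The lower bound will come from applying a three--dimensional analogue of the Traynor trick to the singular Lagrangian fibration supplied by Theorem~\ref{thm: highellips} with $c=1$. The upper bound will come from the observation that every closed geodesic on the round unit $S^3$ has length $2\pi$, hence every closed Reeb orbit on the contact boundary $S^*S^3$ has period $2\pi$.

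For the lower bound, I would specialise Theorem~\ref{thm: highellips} to $c=1$, using $\mathcal{E}(1,1,1,1)=S^3$. This yields a singular Lagrangian fibration $f:(D^*S^3,\omega_{\mathrm{can}})\to A_1$ whose base is bounded by the explicit surface $(\mu_1,\mu_2,\mathcal{A}(\mu_1,\mu_2))$ from equation~\eqref{eq: boundary_11cc}. Over the smooth locus of $A_1$ the fibration is locally toric with $T^3$--fibres and carries the integral affine structure inherited from the action coordinates $(\mu_1,\mu_2,\mathcal{A})$. The six--dimensional Traynor trick (the $n=3$ analogue of Lemma~\ref{lem: traynor}) then embeds $B^6(a)$ symplectically into $f^{-1}(\Delta)\subset D^*S^3$ whenever a standard open $3$--simplex $\Delta$ of size $a$ sits inside the interior of $A_1$ up to the action of $SL(3,\Z)\ltimes\R^3$. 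The task is then combinatorial: perform a transferring--the--cut operation at the nodal singularities of $A_1$ (mimicking what was done for $D^*\mathcal{E}(1,1,c)$ and the Lagrangian bidisk in the paragraphs following Proposition~\ref{prop: ellipsoids} and Theorem~\ref{thm: lagbid}), and then exhibit an inscribed standard simplex of any size $a<2\pi$. The enhanced rotational symmetry present at $c=1$ should force the supremum of admissible sizes to be exactly $2\pi$.

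For the upper bound, I would use that $(D^*S^3,\omega_{\mathrm{can}})$ is a Liouville domain whose contact boundary is the unit cosphere bundle $(S^*S^3,\alpha_{\mathrm{can}})$, with Reeb flow equal to the geodesic flow of the round $S^3$. Since every geodesic on the round unit $S^3$ is a great circle of length $2\pi$, every closed Reeb orbit has period exactly $2\pi$. A symplectic embedding $B^6(a)\hookrightarrow D^*S^3$ with $a>2\pi$ would then violate standard Reeb--period capacity obstructions for Liouville domains (for instance via the first ECH capacity, or via the Hofer--Zehnder capacity and its monotonicity under symplectic embeddings), giving $c_{\mathrm{Gr}}(D^*S^3)\leq 2\pi$.

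The main obstacle I anticipate is the lower--bound calculation, namely identifying the maximal inscribed simplex in $A_1$ and checking that its extremal size equals exactly $2\pi$. The function $\mathcal{A}(\mu_1,\mu_2)$ from equation~\eqref{eq: boundary_11cc} is defined through an integral, and correctly propagating the normalization between the unit cotangent radius $1$ and the geodesic length $2\pi$ through $\mathcal{A}$ and the $T^2$ moment map requires delicate bookkeeping. A secondary subtlety arises when the extremal simplex touches the discriminant locus of $f$; this is handled by the standard approximation argument, taking the supremum over sizes $a\nearrow 2\pi$.
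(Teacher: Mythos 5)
Your lower bound via the Traynor trick (Lemma~\ref{lem: traynor}) applied to the $c=1$ fibration of Theorem~\ref{thm: highellips}, after transferring the cuts at the nodes, is exactly the paper's argument in Lemma~\ref{lem: embedding ball D*S^n}; that part is fine.

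The upper bound is where your argument has a genuine gap. The fact that every prime closed Reeb orbit on $S^*S^3$ has period $2\pi$ does not, on its own, bound the Gromov width of $D^*S^3$ from above. The first ECH capacity is a four-dimensional invariant and does not apply to the six-dimensional domain $D^*S^3$. As for the Hofer--Zehnder capacity, the Reeb period data furnishes a \emph{lower} bound for $c_{HZ}$ (the Hofer--Zehnder theorem produces a closed characteristic on a nearby level of period at most $c_{HZ}$), not an upper bound; the identification of $c_{EHZ}$ with the minimal Reeb period is a theorem about convex bodies in $\mathbb{R}^{2n}$, and $D^*S^3$ is not convex, so it cannot be invoked here. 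What would suffice is establishing $c(D^*S^3)\leq 2\pi$ for some normalized capacity $c$, but that is a genuine computation (carried out via spectral invariants in \cite{Kislev2018BoundsOS}), not mere Reeb-period bookkeeping. The paper instead sidesteps the issue: Proposition~\ref{prop: embedding D*S^n into Q^n} exhibits a symplectic embedding of $D^*S^n$ into the Fermat quadric $(Q^n, 2\omega_{FS})$, a Hermitian symmetric space of compact type, and then quotes the computation of the Gromov width of such spaces from \cite{Loi2013SymplecticCO} to get $c_{Gr}(Q^n)=2\pi$; monotonicity of the Gromov width under symplectic embeddings then gives $c_{Gr}(D^*S^3)\leq 2\pi$. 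If you wish to salvage your line of thought, note that it is precisely the periodicity of the Reeb flow (the Zoll property) that makes $S^*S^3$ a prequantization bundle and lets $D^*S^3$ be symplectically filled by the quadric, so the two ideas are related, but the capacity argument must route through this filling rather than through Reeb periods directly.
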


We delay the proof of this result until Section \ref{sec: per}.

\begin{rmk}
We will prove in a future project that, as a matter of fact, the result in \ref{prop: capacity_S^3}, holds in every dimension. More specifically, $c_{Gr}(D^*S^n)=2\pi$ for every $n\geq 1$. The idea for this is part of the project mentioned in Remark \ref{rmk: bidisk}. The result in Proposition \ref{prop: capacity_S^3} and its generalization, mentioned here, are already known in \cite{Kislev2018BoundsOS}.
\end{rmk}

For $c\neq 1$ we can also estimate the Gromov width of $D^*\mathcal{E}(1,1,c,c)$.

\begin{theorem}\label{thm: Gromov_width}
The Gromov width of $D^*\mathcal{E}(1,1,c,c)$, for $c<1$, satisfies
\begin{equation*}
c_{Gr}(D^*\mathcal{E}(1,1,c,c))\geq
\begin{cases}
\begin{split}
2\pi c&, \textup{ if } c<1,\\
2\pi&, \textup{ if } c\geq 1.
\end{split}
\end{cases}
\end{equation*} 
\end{theorem}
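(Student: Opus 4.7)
The plan is to apply the Traynor trick (Lemma \ref{lem: traynor}) to the singular Lagrangian fibration $f\colon D^*\mathcal{E}(1,1,c,c)\to A_c$ produced by Theorem \ref{thm: highellips}. Setting $a(c):=2\pi c$ for $c<1$ and $a(c):=2\pi$ for $c\geq 1$, the goal is to exhibit, for every $\varepsilon>0$, an $SL(3,\mathbb{Z})$-image of the standard closed simplex
\[
\Delta^3(a(c)-\varepsilon)\;=\;\bigl\{(x_1,x_2,x_3)\in \mathbb{R}^3_{\geq 0}\;:\;x_1+x_2+x_3\leq a(c)-\varepsilon\bigr\}
\]
inside the regular locus of the base after transferring the cuts, and then conclude that $B^{6}(a(c)-\varepsilon)$ symplectically embeds into $D^*\mathcal{E}(1,1,c,c)$.

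First I would perform a transferring-the-cut operation at each singular fiber of $f$, exactly in the spirit of the two-dimensional computation illustrated by Figure \ref{fig:convex_base_11c_2m}. Concretely, one picks branch cuts emanating from the singular values of the base and modifies the integral-affine coordinates across these cuts using the appropriate $SL(3,\mathbb{Z})$ monodromy matrices; away from the cuts the resulting extended base is a genuine toric chart, and its interior sits inside the regular part of $D^*\mathcal{E}(1,1,c,c)$.

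Second, using the explicit boundary surface $(\mu_1,\mu_2,\mathcal{A}(\mu_1,\mu_2))$ from \eqref{eq: boundary_11cc}, I would inscribe a maximal standard simplex in the extended base. For $c=1$ this should reproduce Proposition \ref{prop: capacity_S^3}, yielding a simplex of edge length $2\pi$. For general $c$ the one-parameter family of boundary surfaces depicted in Figure \ref{fig: convex_base_11cc} deforms continuously with $c$: for $c\geq 1$ the inscribed simplex can be kept at size $2\pi$, since the transverse directions become longer and do not bind; whereas for $c<1$ the constraint coming from the squashed axis of the ellipsoid forces the inscribed simplex to shrink proportionally to size $2\pi c$. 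This is the case analysis appearing in the statement. Once a simplex of size $a(c)-\varepsilon$ is inscribed, Lemma \ref{lem: traynor} directly produces the desired symplectic embedding of $B^{6}(a(c)-\varepsilon)$.

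The main obstacle is this inscription step: unlike the two-dimensional case, where the largest inscribed isosceles triangle can essentially be read off Figure \ref{fig:convex_base_11c_2}, in dimension three one must (i) locate and avoid, after transferring the cuts, the images of the two singular fibers of $f$, so that the candidate simplex sits entirely in the regular locus, and (ii) verify analytically, from the explicit formula for $\mathcal{A}$, that the boundary surface does not clip any face of the simplex of size $a(c)$. Both reduce to one-variable calculus checks once \eqref{eq: boundary_11cc} is unwound, but tracking how the relevant length scale depends on $c$, so that the cases $c<1$ and $c\geq 1$ split exactly as stated, is the delicate point of the argument.
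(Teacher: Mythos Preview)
Your outline is a legitimate strategy, and in fact the paper explicitly mentions it as one of two possible routes. However, the paper \emph{chooses the other one}, precisely because it avoids the inscription step you correctly flag as the main obstacle.

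The paper's actual argument is a one-line reduction to Proposition~\ref{prop: capacity_S^3}. For $c<1$, one observes that $D^*S^3$ symplectically embeds into the rescaling $\frac{1}{\sqrt{c}}\,D^*\mathcal{E}(1,1,c,c)\subset T^*\mathbb{R}^4$; monotonicity of the Gromov width then gives
\[
2\pi \;=\; c_{Gr}(D^*S^3)\;\leq\; \tfrac{1}{c}\,c_{Gr}\bigl(D^*\mathcal{E}(1,1,c,c)\bigr),
\]
hence $c_{Gr}(D^*\mathcal{E}(1,1,c,c))\geq 2\pi c$. For $c\geq 1$ one has $D^*S^3\hookrightarrow D^*\mathcal{E}(1,1,c,c)$ directly, giving $c_{Gr}(D^*\mathcal{E}(1,1,c,c))\geq 2\pi$. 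No fibration for general $c$, no transferring of cuts, and no analysis of \eqref{eq: boundary_11cc} is needed.

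What each approach buys: your route is self-contained within the almost-toric framework and would, if completed, give an explicit ball in action-angle coordinates for every $c$; but it requires carrying out the three-dimensional cut-transfer and the simplex-inscription against the boundary surface \eqref{eq: boundary_11cc}, which you yourself identify as delicate and leave undone. The paper's route trades all of that for a single geometric inclusion plus the already-established case $c=1$. Since only a lower bound is claimed, the scaling argument is both shorter and complete.
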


The lower bound for $c_{Gr}(D^*\mathcal{E}(1,1,c,c))$ in Theorem \ref{thm: Gromov_width} can be computed in two different ways. On the one hand, we could use the fibration in Theorem \ref{thm: highellips} to find, with a little bit of work, as in the proof of Proposition \ref{prop: capacity_S^3}, an embedding of a ball of capacity $2\pi c$. On the other hand, and we choose this approach for its simplicity, we could use the fact that, for $c<1$, $D^*S^3$ symplectically embeds into the scaling $\frac{1}{\sqrt{c}}D^*\mathcal{E}(1,1,c,c)\subset T^*\mathbb{R}^4$ of $D^*\mathcal{E}(1,1,c,c)$, which implies by Proposition \ref{prop: capacity_S^3} and the monotonicity of symplectic capacities that, $2\pi=c_{Gr}(D^*S^3)\leq \frac{1}{c}c_{Gr}(D^*\mathcal{E}(1,1,c,c))$. Similarly, for $c>1$, we have that $D^*S^3$ symplectically embeds into $D^*\mathcal{E}(1,1,c,c)$, which implies that $2\pi=c_{Gr}(D^*S^3)\leq c_{Gr}(D^*\mathcal{E}(1,1,c,c))$.

\noindent{\textbf{Structure of the paper:}} In Section \ref{sec: genmet}, we state our definition of a singular Lagrangian fibration and introduce our general technique for their constructions. In Section \ref{sec: conslf}, we present the proofs for the results presented in Section \ref{sec: rescslf}. Finally, in Section \ref{sec: per}, we present the proofs for the results in Section \ref{sec: embres}.\\

\noindent{\textbf{Acknowledgements: }  Most of S. A-A.’s contribution to this work was carried out while
he was a postdoctoral researcher at the Department of Mathematics, Uppsala University,
and the final revisions were completed after he joined Universidad San Francisco de Quito
USFQ. Part of this work was carried out during a visit of A.V. to Uppsala University in August of 2024; he is very grateful to the Department of Mathematics for its hospitality and financial support. A.V thanks Johanna Bimmermann for making him aware of a result in \cite{Loi2013SymplecticCO} that was crucial in the proof of Proposition \ref{prop: capacity_S^3}.  S. A-A. was supported by the Knut and Alice Wallenberg Foundation through the grant KAW 2023.0294. A. V. was supported by the ISF Grant No. 2445/20. R. V. was supported by grant SEI E-26/200.230/2023 from Carlos Chagas Filho Research Support Foundation of the state of Rio de Janeiro (FAPERJ), grant \#2024/01351-6 from S\~{a}o Paulo Research Foundation (FAPESP), by ``Novos docentes'' grant from S\~{a}o Paulo University (USP).

\section{The General Method} \label{sec: genmet}
In this section, we recall the definition of Lagrangian fibrations as well as what we understand by singular Lagrangian fibration. Later, we describe a general method for constructing such kind of singular Lagrangian fibrations in our examples. 

\begin{definition}[{\cite[Definition~2.1]{symington71four}}]
	A locally trivial fibration of a symplectic manifold $(M^{2n},\omega)$ is a \textbf{regular Lagrangian fibration} if the fibers are smooth Lagrangians. A map $ \widetilde{\pi}:M^{2n} \to \widetilde{B}^n $ is a \textbf{Lagrangian fibration} if it restricts to a regular Lagrangian fibration over $ \widetilde{B} \setminus \widetilde{\Sigma} $, where $ \widetilde{B} \setminus \widetilde{\Sigma} $ is an open dense subset of $ \widetilde{B} $.
 \end{definition}

 \begin{rmk}
	We assume that our fibers are compact and connected. Hence, by Arnold-Liouville Theorem, the regular fibers are isomorphic to $ T^n $.
\end{rmk}

\begin{definition} \label{defsinglagfib}
A \textbf{singular Lagrangian fibration} of a symplectic manifold $(M^{2n}, \omega)$ is a Lagrangian fibration that is not regular.
\end{definition}

\begin{rmk}
	For instance, a toric fibration on a closed manifold is a singular Lagrangian fibration with singularities over the boundary of the moment polytope.
\end{rmk}

Our method to construct the singular Lagrangian fibrations obtained in Theorems \ref{thm: ATF_surface_revolution}, \ref{thm: highellips}, and \ref{thm: lagbid} is encapsulated in the following proposition.

\begin{prop}\label{generalmeth2}
Let \(M\) be an \(n\)-dimensional Riemannian manifold, and let \((T^{*}M, \omega)\) denote its cotangent bundle equipped with the canonical symplectic form. Assume the following:

\begin{enumerate}[label=\textnormal{(\roman*)}]
\item  There exists an effective Hamiltonian \(T^{n-1}\)-action on \(T^{*}M\), with moment map  
\(\mu: T^{*}M \rightarrow \mathbb{R}^{n-1}\), such that the action preserves the fibre-wise norm induced by the Riemannian metric.

\item For each \(\lambda \in \mathbb{R}^{n-1}\), the torus action on \(\mu^{-1}(\lambda)\) is free except at finitely many orbits. Moreover, the complement of the set of critical values of \(\mu\) is an open and dense subset of \(\mathbb{R}^{n-1}\).

\item For each \(\lambda\in\mathbb R^{n-1}\), there exists a continuous map
\[
f_\lambda:\mu^{-1}(\lambda)\to X_\lambda^2,
\]
where \(X_\lambda^2\) is a two-dimensional topological manifold embedded in
a fixed Euclidean space \(\mathbb R^m\), with \(m\) independent of \(\lambda\),
such that:
\begin{itemize}
\item the fibers of \(f_\lambda\) are connected and invariant under the
\(T^{n-1}\)-action;
\item the induced map
\[
\widetilde f_\lambda:\mu^{-1}(\lambda)/T^{n-1}\to X_\lambda^2
\]
is a homeomorphism;
\item on the subset of \(\mu^{-1}(\lambda)\) where the action is free,
\(\widetilde f_\lambda\) restricts to a diffeomorphism between smooth
manifolds.
\end{itemize}

\item 
For each \(\lambda \in \mathbb{R}^{n-1}\), the image  
$
f_{\lambda}\big(D^{*}M \cap \mu^{-1}(\lambda)\big)
$
is simply connected. Moreover, there exists a family of smooth simple closed
curves \(\{\gamma_{\lambda,s}\}_{s\in I}\) in \(X_\lambda^2\) whose regular
leaves foliate
$
f_{\lambda}\big(D^{*}M \cap \mu^{-1}(\lambda)\big)
$
up to the limiting leaves. The curves depend smoothly on \(\lambda\) and \(s\),
in the sense that, after choosing parametrizations, the map
\[
(\lambda,s,\theta)\longmapsto \gamma_{\lambda,s}(\theta)
\]
is smooth as a map to the fixed ambient space \(\mathbb R^m\).
\end{enumerate}

Then the unit cotangent disc bundle \(D^*M\subset T^*M\) admits a singular
Lagrangian torus fibration whose regular fibers are
\[
L_{\lambda,s}
=
f_\lambda^{-1}(\gamma_{\lambda,s})\cap D^*M.
\]
Equivalently, the fibration is obtained by combining the moment map coordinates
\(\mu\) with the leaf parameter of the chosen foliation of the reduced spaces.
When the leaves are parametrized by symplectic area, as in the examples below,
the fibration is written on the regular part as
\[
(\mu,\mathcal A):D^*M\to \mathbb R^{n-1}\times\mathbb R_{\geq 0}.
\]
\end{prop}

\begin{proof}
We adapt the construction method from~\cite{gross2001examples} to produce a singular Lagrangian torus fibration on \(D^*M\).

By assumption~(i), the Hamiltonian \(T^{n-1}\)-action preserves the fibre-wise norm on \(T^*M\), and thus restricts to a well-defined action on the unit cotangent disc bundle \(D^*M\).  
Assumption~(ii) guarantees that for each fixed \(\lambda \in \mathbb{R}^{n-1}\), the level set \(\mu^{-1}(\lambda)\) is smooth except at finitely many orbits where the action is not free. At regular values of \(\mu\), the reduced space \(\mu^{-1}(\lambda)/T^{n-1}\) is a smooth 2-dimensional manifold.

From assumption~(iii), we are given a continuous map  
\(f_\lambda : \mu^{-1}(\lambda) \to X^2_\lambda \subset \mathbb{R}^m\),  
whose fibers are connected and invariant under the \(T^{n-1}\)-action, and whose induced map  
\(\widetilde{f}_\lambda : \mu^{-1}(\lambda)/T^{n-1} \to X^2_\lambda\)  
is a homeomorphism. On the open dense subset where the action is free, the quotient is smooth and \(\widetilde{f}_\lambda\) restricts to a diffeomorphism. This allows us to transfer local geometric data between \(X^2_\lambda\) and \(\mu^{-1}(\lambda)/T^{n-1}\).

Assumption~(iv) provides a smooth family of simple closed curves \(\{\gamma_{\lambda,s}\}_{s \in I}\) in \(X^2_\lambda\), smoothly parametrized by both \(\lambda\) and \(s\), and foliating the image \(f_\lambda(D^*M \cap \mu^{-1}(\lambda))\). For each pair \((\lambda,s)\), define
\[
L_{\lambda,s}
:=
f_\lambda^{-1}(\gamma_{\lambda,s})\cap D^*M.
\]

In the regular case, where the action is free, the lift of \(\gamma_{\lambda,s}\) under \(\widetilde{f}_\lambda^{-1}\) defines a smooth embedded circle in the reduced space, and its preimage under the quotient map defines a smooth embedded torus
\[
L_{\lambda, s} \cong T^{n-1} \times S^1 \cong T^n.
\]

To prove that \(L_{\lambda, s}\) is Lagrangian, we closely follow the argument in \cite[Theorem 1.2]{gross2001examples}. For a noncritical point \(x \in \mu^{-1}(\lambda)\), we have
\[
T_x(\mu^{-1}(\lambda)/T^{n-1}) = \frac{T_x \mu^{-1}(\lambda)}{(T_x \mu^{-1}(\lambda))^{\omega}},
\]
where \((T_x \mu^{-1}(\lambda))^{\omega}\) is the symplectic orthogonal, which coincides with the tangent space to the orbit \(T^{n-1} \cdot x\), generated by the vector fields \(X_1, \dots, X_{n-1}\). Since \(\mu^{-1}(\lambda)/T^{n-1}\) is a 2-dimensional symplectic manifold, any 1-dimensional submanifold is Lagrangian. Thus, \(\widetilde{f}_\lambda^{-1}(\gamma_{\lambda, s})\) is a simple closed Lagrangian submanifold in the reduced space, and \(L_{\lambda, s}\) is its preimage in \(\mu^{-1}(\lambda)\).

Let \(Y\) be a lift of a generator of the tangent space to \(\widetilde{f}_\lambda^{-1}(\gamma_{\lambda, s})\) at \(x \mod T^{n-1}\). At regular points, the tangent space \(T_x L_{\lambda, s}\) is spanned by \(X_1, \dots, X_{n-1}, Y\). Since \(\omega(X_i, X_j) = 0\) and \(Y \in T_x \mu^{-1}(\lambda)\), it follows that \(\omega(X_i, Y) = 0\) because \(X_i \in (T_x \mu^{-1}(\lambda))^\omega\). Therefore, \(L_{\lambda, s}\) is a smooth Lagrangian submanifold wherever it avoids the singular set of \(\mu\).

 When \(L_{\lambda,s}\) meets the non-free locus of the torus action, or when the corresponding curve is a limiting leaf of the chosen foliation, the fibre
may fail to be a smooth torus. These are precisely the singular fibers of the
constructed fibration.  We can define our Lagrangian fibration by combining the moment map coordinates
\(\mu\) with the leaf parameter of the chosen foliation of the reduced spaces. Since the free regular locus is open and dense by the
assumptions, the above construction gives a regular Lagrangian torus fibration over an
open dense subset of the base, and hence a singular Lagrangian torus fibration
on \(D^*M\).

This concludes the proof.

\end{proof}

\begin{rmk}
In the examples considered in this paper, all the assumptions of Proposition~\ref{generalmeth2} are satisfied. More specifically:

\begin{itemize}
    \item The torus action on \(T^*M\) is Hamiltonian and preserves the fibre-wise norm, satisfying condition~\textnormal{(i)}.
    
    \item The image of the singular
locus in the base is contained in a finite collection of codimension-one Whitney
stratified cuts. After removing these cuts, the action coordinates give the
standard integral-affine structure on the base. Therefore, the resulting
fibrations admit convex base diagrams in the sense of
\cite[Definition~2.33]{achigandrango2024singular}.

    \item The maps \(f_\lambda : \mu^{-1}(\lambda) \to X^2_\lambda\) are explicitly constructed so that the induced maps \(\widetilde{f}_\lambda\) are homeomorphisms, and restrict to diffeomorphisms on the regular part of the quotient, as stated in condition~\textnormal{(iii)}.

    \item In some examples, an initial family of closed curves arises from level sets of
a global function depending smoothly on both \(\lambda\) and an auxiliary
parameter. This gives an auxiliary Lagrangian fibration of the form
\((\mu,H)\). However, this is not always the restricted almost-toric fibration
used later in the paper; in such cases, the foliation is modified in order to
obtain the desired boundary-adapted base diagram, see, for example
Figure~\ref{fig:folX20}.
\end{itemize}
\end{rmk}

In all the examples in this paper, we will work with restricted almost-toric fibrations as defined in \cite[Definition 2.35]{achigandrango2024singular}. We represent these Lagrangian fibrations using a base diagram (see \cite[Section 2]{achigandrango2024singular}).

We now show how this idea works in a classical toy model, taken from \cite{auroux2007mirror}.

\begin{ex}[Toy model]\label{ex: toy model}
For $\varepsilon\neq 0$, consider the locus $F:=V(z_1z_2-z_3(z_3-\varepsilon))\subset \mathbb{C}^3$. There exists a Hamiltonian $S^1$-action on $F$ given by 
$$e^{i\theta}\cdot(z_1,z_2,z_3)=(e^{i\theta}z_1,e^{-i\theta}z_2,z_3).$$
This action has moment map $\mu(z_1,z_2,z_3)=\frac{1}{2}(|z_1|^2-|z_2|^2)$. Let $\pi: F\rightarrow \mathbb{C}$ be the singular symplectic fibration given by $\pi(z_1,z_2,z_3)=z_3$. Notice that for every $z\in \mathbb{C}$ with $z\neq0, \varepsilon$ we have that $\pi^{-1}(z)\cong \mathbb{C^*}$ whereas $\pi^{-1}(0)$ and $\pi^{-1}(\varepsilon)$ can be identified with the cone $\{(z_1,z_2)\in\mathbb{C}^2\mid z_1z_2=0\}$, these are the singular fibers. See Figure \ref{fig: toy_model}.

\begin{figure}[h!]
\centering
\includegraphics[scale=0.6]{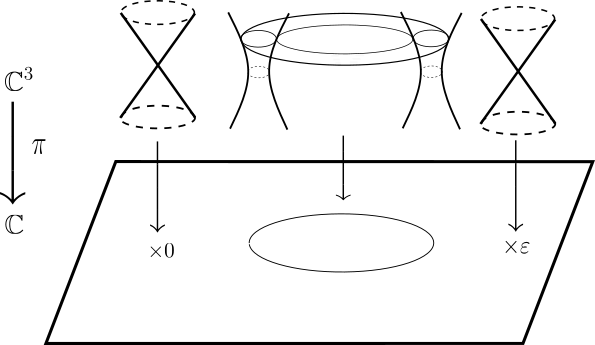}
\captionsetup{justification=centering}
\caption{The fibration $\pi$ in Example \ref{ex: toy model}.}
\label{fig: toy_model}
\end{figure}

Consider now the map $f:\mathbb{C}\rightarrow \mathbb{R}$ given by $f(z)=|z-a|^2$ for $a\in \mathbb{R}$ and $a\neq 0,\varepsilon$. Then, it is not hard to see that $(\mu,f\circ\pi)$ is an integrable system. From the computation
\begin{align*}
d\mu &=z_1d\bar{z}_1+\bar{z}_1dz_1-z_2d\bar{z}_2-\bar{z}_2dz_2,\\
d(f\circ \pi)&=(\bar{z}_3-a)dz+(z_3-a)d\bar{z}_3,
\end{align*} 
we can see that the system $(\mu,f\circ\pi)$ has an elliptic singularity at every point $(z_1,z_2,a)\in F$, whereas the points $(0,0,0)$ and $(0,0,\varepsilon)$ are also singularities, however of nodal kind. To see this is enough to notice that the subspace generated by the Hessians of $\mu$ and $(f\circ \pi)$ agree with that of the Hamiltonian system\footnote{This is the local model defining nodal or focus-focus singularities.} $(H_1,H_2):\mathbb{C}^2\rightarrow \mathbb{R}$ given by $(H_1,H_2)(z_1,z_2)=\big(\frac{i}{2}\mathfrak{Im}(z_1^2+z_2^2),\mathfrak{Im}(z_1\bar{z}_2)\big)$. Using the idea in Proposition \ref{generalmeth2}, we can construct a standard Lagrangian fibration on $F$. It is not hard to see that this fibration has the same fibers as the continuous map $(\mu,A):F\rightarrow \mathbb{R}\times \mathbb{R}_{\geq 0}$, where $A(z_1,z_2,z_3)$ is the area of a disk in $F$ with boundary in the Lagrangian torus $(\mu,f\circ\pi)^{-1}(\frac{1}{2}(|z_1|^2-|z_2|^2),|z_3-a|^2)$ and projecting to the curve $|z-a|^2=|z_3-a|^2$ in the base $\mathbb{C}$ of the fibration. The image of the map $(\mu, A)$ represents an almost toric base diagram of the quadric $T^*S^2$, which includes those depicted in Figure \ref{fig:convex_base_11c_2}.
\end{ex}

\section{Constructing Singular Lagrangian Fibrations}\label{sec: conslf}

In this section, we apply the general method to some examples.

\subsection{Disk cotangent bundle of ellipsoids of revolution}

\begin{proof}[Proof of Theorem \ref{thm: ATF_surface_revolution}]
Let $u: [a, b] \to \mathbb{R}_{\geq 0}$ be the profile curve for the surface of revolution $S$. Without loss of generality, we assume $a < z_0 = 0 < b$, where $z_0$ is the unique critical point of $u$. Considering \( T^*S \) with its standard inclusion in \( \mathbb{R}^6 \), we have:
\begin{equation}\label{eq: T*S}
T^*S = \left\{ (\xi, \eta) \in \mathbb{R}^3 \times \mathbb{R}^3 \mid \xi_1^2 + \xi_2^2 - (u(\xi_3))^2 = 0, \ \xi_1 \eta_1 + \xi_2 \eta_2 - u(\xi_3) u'(\xi_3) \eta_3 = 0 \right\}.
\end{equation}
In these coordinates, the canonical symplectic form on \( T^*S \) coincides with the restriction of the standard symplectic form on \( \mathbb{R}^6 \), given by \( \omega = \sum d\xi_j \wedge d\eta_j \).

Now, consider the natural Hamiltonian \( S^1 \)-action on \( T^*S \) given by:
\[
e^{i\theta} \cdot \begin{pmatrix}
	\xi_1 \\
	\xi_2 \\
	\xi_3 \\
	\eta_1 \\
	\eta_2 \\
	\eta_3
\end{pmatrix} \longmapsto \begin{pmatrix}
	\cos 2\pi\theta & -\sin 2\pi\theta & 0 & 0 & 0 & 0 \\
	\sin 2\pi\theta & \cos 2\pi\theta & 0 & 0 & 0 & 0 \\
	0 & 0 & 1 & 0 & 0 & 0 \\
	0 & 0 & 0 & \cos 2\pi\theta & -\sin 2\pi\theta & 0 \\
	0 & 0 & 0 & \sin 2\pi\theta & \cos 2\pi\theta & 0 \\
	0 & 0 & 0 & 0 & 0 & 1
\end{pmatrix} \begin{pmatrix}
	\xi_1 \\
	\xi_2 \\
	\xi_3 \\
	\eta_1 \\
	\eta_2 \\
	\eta_3
\end{pmatrix}.
\]
This action has moment map \( \mu(\xi, \eta) = 2\pi(\xi_1 \eta_2 - \xi_2 \eta_1) \), and preserves the norm of the cotangent fibers $|\eta|$. We consider the map \( f: T^*S \to \mathbb{R}^3 \) given by:
\[
f(\xi, \eta) := \left( \xi_3,  \eta_3, |\eta|^2 \right).
\]
And we define $f_\lambda:=f|_{\mu^{-1}(\lambda)}$ and $X^2_{\lambda}:=f(\mu^{-1}(\lambda))$.

To use Proposition \ref{generalmeth2}, we need to prove that $f_\lambda$ satisfies the desired hypothesis. It is easy to see that the fibers of $f_\lambda$ are connected and preserved by the $S^1$ action. Now, we are going to check that the induced map $\widetilde{f}_{\lambda}:\mu^{-1}(\lambda)/S^1 \to X^2_{\lambda}$ is a bijection for every $\lambda \in \mathbb{R}$, and that $f_{\lambda}(D^*S \cap \mu^{-1}(\lambda))$ is simply connected for every $\lambda$. We also provide a foliation of this set given by the norm of the cotangent fibers. By abuse of notation, we refer to a fixed value of the moment map and a fixed value of the norm of the cotangent fibers as $\mu$ and $|\eta|$, respectively.

\begin{clm}
\begin{enumerate}[label=(\roman*)]
    \item \label{claim: i} The following relation holds:
    \begin{equation}\label{eq: relation}
       \eta_3^2 u(\xi_3)^2 \left(1 + u'(\xi_3)^2\right) = u(\xi_3)^2|\eta|^2 -\frac{\mu^2}{4\pi^2} .
    \end{equation}
    \item \label{claim: ii} For fixed $\mu$ and $|\eta| \neq 0$, the locus in the plane $(\xi_3, \eta_3)$ defined by equation \eqref{eq: relation} is a closed curve or a point. For $|\eta| = 0$, we have $\mu = 0$, and the locus is $[a, b]\times \{0\}$, i.e., the interval $[a, b]$ on the $\xi_3$ axis.
    \item \label{claim: iii} $f(D^*S \cap \mu^{-1}(\lambda))$ is simply connected and has a foliation induced by the norm of the cotangent fibers.
    \item \label{claim: iv} The induced map $\widetilde{f}_{\lambda}:\mu^{-1}(\lambda)/S^1 \to X^2_{\lambda}\subset \mathbb{R}^3$ is a bijection for every $\lambda \in \mathbb{R}$
\end{enumerate}
\end{clm}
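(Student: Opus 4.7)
The plan is to establish parts (i)--(iv) in sequence, treating (i) as an algebraic identity from which the geometric descriptions in (ii)--(iv) follow. For part~(i), the approach is a direct computation from the defining equations of $T^{*}S$ in $\mathbb{R}^6$: starting from $\xi_1^2+\xi_2^2 = u(\xi_3)^2$, $\xi_1\eta_1+\xi_2\eta_2 = u(\xi_3)u'(\xi_3)\eta_3$, and $\xi_1\eta_2-\xi_2\eta_1 = \mu/(2\pi)$, apply the Lagrange identity
\[
(\xi_1\eta_1+\xi_2\eta_2)^2 + (\xi_1\eta_2-\xi_2\eta_1)^2 = (\xi_1^2+\xi_2^2)(\eta_1^2+\eta_2^2)
\]
and substitute $\eta_1^2+\eta_2^2 = |\eta|^2-\eta_3^2$ to obtain \eqref{eq: relation} after rearrangement.

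For part~(ii), introduce the auxiliary function $v(\xi_3):=u(\xi_3)u'(\xi_3)$, which extends continuously to $[a,b]$ by the limit hypotheses on $u$ and is strictly nonzero at the poles. Rewriting \eqref{eq: relation} as $\eta_3^2\bigl(u(\xi_3)^2+v(\xi_3)^2\bigr) = u(\xi_3)^2|\eta|^2 - \mu^2/(4\pi^2)$ shows that the right-hand side must be non-negative, constraining $\xi_3$ to the set $\{u(\xi_3)\ge |\mu|/(2\pi|\eta|)\}$, which by the uniqueness of the critical point of $u$ is either empty, a single point, or a compact interval $[\xi_-,\xi_+]$. On this interval, one solves $\eta_3^2$ as a non-negative function of $\xi_3$ vanishing exactly at the endpoints, yielding a closed curve symmetric in $\eta_3$ (degenerating to a point when the interval collapses). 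The case $|\eta|=0$ forces $\mu=0$, and \eqref{eq: relation} reduces to $\eta_3^2(u^2+v^2)=0$; since $u^2+v^2>0$ on all of $[a,b]$, this gives exactly $\eta_3=0$, i.e.\ the interval $[a,b]\times\{0\}$.

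Part~(iii) follows directly from (ii): in the coordinates $(\xi_3,\eta_3,|\eta|^2)$ on $\mathbb{R}^3$, slicing $f(D^*S\cap\mu^{-1}(\lambda))$ by the level sets of the third coordinate produces exactly the closed curves of (ii), which shrink to a point (or, when $\mu=0$, to the segment $[a,b]\times\{0\}$) at the lower endpoint of the range of $|\eta|^2$; the total image is therefore a topological disk, hence simply connected, and the level sets of $|\eta|^2$ give the desired foliation. For part~(iv), injectivity of $\widetilde{f}_\lambda$ is shown by a direct reconstruction argument: given $(\xi_3,\eta_3,|\eta|^2)$ with $u(\xi_3)>0$, the circle $\xi_1^2+\xi_2^2=u(\xi_3)^2$ parametrizes $(\xi_1,\xi_2)$ up to the $S^1$-action, and then the linear system
\[
\begin{pmatrix} \xi_1 & \xi_2 \\ -\xi_2 & \xi_1 \end{pmatrix}\begin{pmatrix} \eta_1 \\ \eta_2 \end{pmatrix} = \begin{pmatrix} u(\xi_3)u'(\xi_3)\eta_3 \\ \mu/(2\pi) \end{pmatrix}
\]
has non-vanishing determinant $u(\xi_3)^2$ and therefore a unique solution for each choice of $(\xi_1,\xi_2)$; rotating $(\xi_1,\xi_2)$ by $S^1$ rotates $(\eta_1,\eta_2)$ by the same angle, so the full preimage is a single $S^1$-orbit. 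The pole case $u(\xi_3)=0$ forces $\mu=0$ and $\eta_3=0$, and the fibre is the circle $\eta_1^2+\eta_2^2=|\eta|^2$ inside the cotangent space at the pole, again a single $S^1$-orbit.

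The main obstacle is the careful handling of the endpoints $\xi_3=a,b$, where $u$ vanishes and $u'$ blows up: the trick of passing to the combination $v=uu'$, which has a finite nonzero limit by hypothesis, is what makes both the locus description in (ii) and the bijectivity at the poles in (iv) work uniformly. In particular, checking that the closed curves close up at these otherwise degenerate points when $\mu=0$, and that the fibre at a pole reduces to a single $S^1$-orbit, relies essentially on this limiting behaviour rather than on formulae for $u$ itself.
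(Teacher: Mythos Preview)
Your proposal is correct and follows essentially the same route as the paper's proof: part~(i) via the Lagrange identity (which the paper phrases as adding the squares of $\mu$ and the cotangent condition and comparing with the product $(\xi_1^2+\xi_2^2)(\eta_1^2+\eta_2^2)$), part~(ii) by analyzing the sublevel set $u(\xi_3)^2|\eta|^2\ge\mu^2/(4\pi^2)$, and part~(iv) by a case split on whether $u(\xi_3)$ vanishes, using the $S^1$-action to normalise and then solving uniquely for $(\eta_1,\eta_2)$. Your explicit introduction of $v=uu'$ to show $u^2+v^2>0$ uniformly on $[a,b]$ is a nice touch that makes the pole behaviour in (ii) and (iv) cleaner than in the paper, which handles those endpoints somewhat more tersely; but this is a presentational refinement rather than a different argument.
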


\begin{proof}[Proof of the Claim]

\noindent\textbf{\ref{claim: i}.} \\
We begin by considering the following equations:
\begin{align}
    \frac{\mu^2}{4\pi^2} &= \xi_1^2 \eta_2^2 - 2\xi_1 \xi_2 \eta_1 \eta_2 + \xi_2^2 \eta_1^2, \label{eq:2mu} \\
    u(\xi_3)^2 u'(\xi_3)^2 \eta_3^2 &= \xi_1^2 \eta_1^2 + 2\xi_1 \xi_2 \eta_1 \eta_2 + \xi_2^2 \eta_2^2, \label{eq:2xi} \\
    |\eta|^2 - \eta_3^2 &= \eta_1^2 + \eta_2^2, \label{eq:2normeta} \\
    u(\xi_3)^2 &= \xi_1^2 + \xi_2^2. \label{eq:2xi3}
\end{align}
Equation \eqref{eq:2mu} is obtained by squaring the moment map $\mu$, while \eqref{eq:2xi} results from squaring the cotangent condition in the definition of $T^*S$, see equation \eqref{eq: T*S}. Note that adding the right-hand sides of \eqref{eq:2mu} and \eqref{eq:2xi} yields the same expression as multiplying the right-hand sides of \eqref{eq:2normeta} and \eqref{eq:2xi3}. This algebraic identity implies the desired relation.

\vspace{1em}
\noindent\textbf{\ref{claim: ii}.} \\
We analyze the relation
\[
u(\xi_3)^2 |\eta|^2 - \frac{\mu^2}{4\pi^2} \geq 0,
\]
which must hold for the curve defined by \eqref{eq: relation}, with fixed $\mu$ and $|\eta|$, to be nonempty. For fixed $\mu$ and $|\eta| \neq 0$, the left-hand side is non-negative over an interval of $\xi_3$. Since $u(0)$ is the maximal value of $u$, it follows that
\[
\mu^2 \leq 4\pi^2 u(0)^2 |\eta|^2.
\]

If the inequality is strict, i.e., $\mu^2 < 4\pi^2 u(0)^2 |\eta|^2$, then by the properties of $u$, there exist exactly two points
\[
a \leq \xi_{3,\min}^{\eta,\mu} < 0 < \xi_{3,\max}^{\eta,\mu} \leq b
\]
satisfying
\[
4\pi^2 u(\xi_3)^2 |\eta|^2 - \mu^2 = 0.
\]
Thus, equation \eqref{eq: relation} defines a closed curve in the plane $(\eta_3,\xi_3)$, for $\xi_3 \in [\xi_{3,\min}^{\eta,\mu}, \xi_{3,\max}^{\eta,\mu}]$.

If $\mu^2 = 4\pi^2 u(0)^2 |\eta|^2$, then the equality $4\pi^2 u(\xi_3)^2 |\eta|^2 - \mu^2 = 0$ holds only at $\xi_3 = 0$, and hence the solution is a single point.

In the case $|\eta| = 0$, the inequality forces $\mu = 0$, and the equation \eqref{eq: relation} reduces to either $\eta_3 = 0$, with $\xi_3$ arbitrary in $[a, b]$, or $u(\xi_3)=0$, which only happens for $\xi_3=a,b$ and therefore $\eta_3=0$ in that case, by \eqref{eq: T*S}.

\vspace{1em}
\noindent\textbf{ \ref{claim: iii}.} \\
This follows directly from Claim \ref{claim: ii}. The curves described above are in the level sets defined by fixing $|\eta|$, and thus are mapped by $f$ to closed curves $\gamma_{\mu,|\eta|}$. As $|\eta| \to 0$, these curves shrink: they collapse to a point when $\mu \neq 0$ or to a line segment when $\mu = 0$.

\vspace{1em}
\noindent\textbf{\ref{claim: iv}.} \\
We aim to prove that distinct $S^1$-orbits map to distinct points under $\widetilde{f}_\lambda$. Suppose two orbits map to the same image. Then they must share the same values of $(\xi_3, \eta_3, |\eta|^2)$. To prove injectivity, we show that for each such triple, there exists a unique orbit under the $S^1$-action.

Using the $S^1$-action, we may assume without loss of generality that either $\xi_1 = 0$ or $\eta_1 = 0$, which simplifies the analysis of the equations.

\textit{Case 1:} $u(\xi_3) \neq 0$. Without loss of generality, we can assume $\xi_1 = 0$, by moving along an orbit of the $S^1$-action. Then the equations yield:
\[
\xi_2 = \pm u(\xi_3), \qquad 
\eta_1 = -\frac{\lambda}{2\pi \xi_2}, \qquad 
\eta_2 = \frac{u(\xi_3) u'(\xi_3) \eta_3}{\xi_2}.
\]
These give two points in the same $S^1$-orbit.

\textit{Case 2:} $u(\xi_3) = 0$. We now set $\eta_1 = 0$. Then we obtain:
\[
\xi_1 = \xi_2 = 0, \qquad 
\eta_2 = \pm \sqrt{|\eta|^2}, \qquad 
\eta_3=0.
\]
Again, we get exactly two points related by a rotation of $\pi$, regarding the $S^1$-action.

In both cases, we conclude that the triple $(\xi_3, \eta_3, |\eta|^2)$ uniquely determines an $S^1$-orbit. This completes the proof of injectivity.
\end{proof}

To construct an almost toric fibration on \(D^*S\), we begin by choosing an
appropriate foliation of the base of each \(f_\lambda\). A natural choice
arises from the norm of the cotangent fibers. For \(\mu=0\), the level
\(|\eta|=0\) is the image of the zero section in \(X_0^2\); explicitly, it is
the segment
\[
Z_S=\{(\xi_3,\eta_3,|\eta|^2)\mid \xi_3\in[a,b],\ \eta_3=0,\ |\eta|^2=0\}.
\]
However, in the resulting almost toric base diagram, whose coordinates are
\((\mu,\mathcal A)\), this whole segment is mapped to the single point
\((0,0)\), since \(\mu=0\) on the zero section and the area coordinate
\(\mathcal A\) vanishes there. In Figure~\ref{fig:folX20}(a), we indicate the segment explicitly. In the particular ellipsoid examples shown in
Figure~\ref{fig: convex_base_11c}, its image is the marked point on the
horizontal axis, labelled according to the corresponding ellipsoid
\(\mathcal E(1,1,c)\).

However, for the level set \( \mu = 0 \), we are free to choose a different foliation. In particular, we can select one consisting of simple closed curves that retract to a point; see Figure~\ref{fig:folX20}. This flexibility allows us to control the appearance and placement of singular fibers in the convex base diagram.

\begin{figure}[h!]
    \centering
    \begin{subfigure}[b]{0.7\textwidth}
        \centering
        \includegraphics[width=\textwidth]{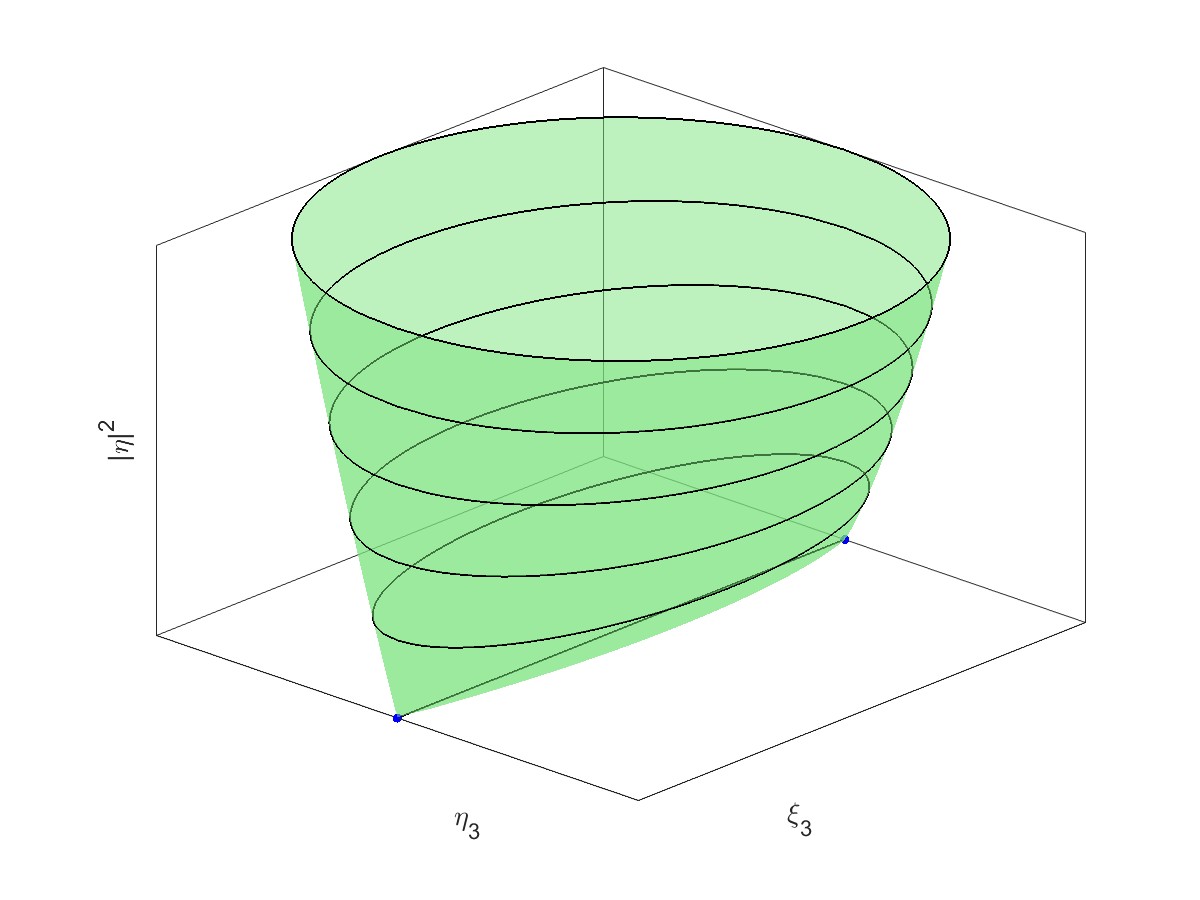}
        \caption{Foliation given by \( |\eta| \).}
    \end{subfigure}
    \hfill
    \begin{subfigure}[b]{0.8\textwidth}
        \centering
        \includegraphics[width=\textwidth]{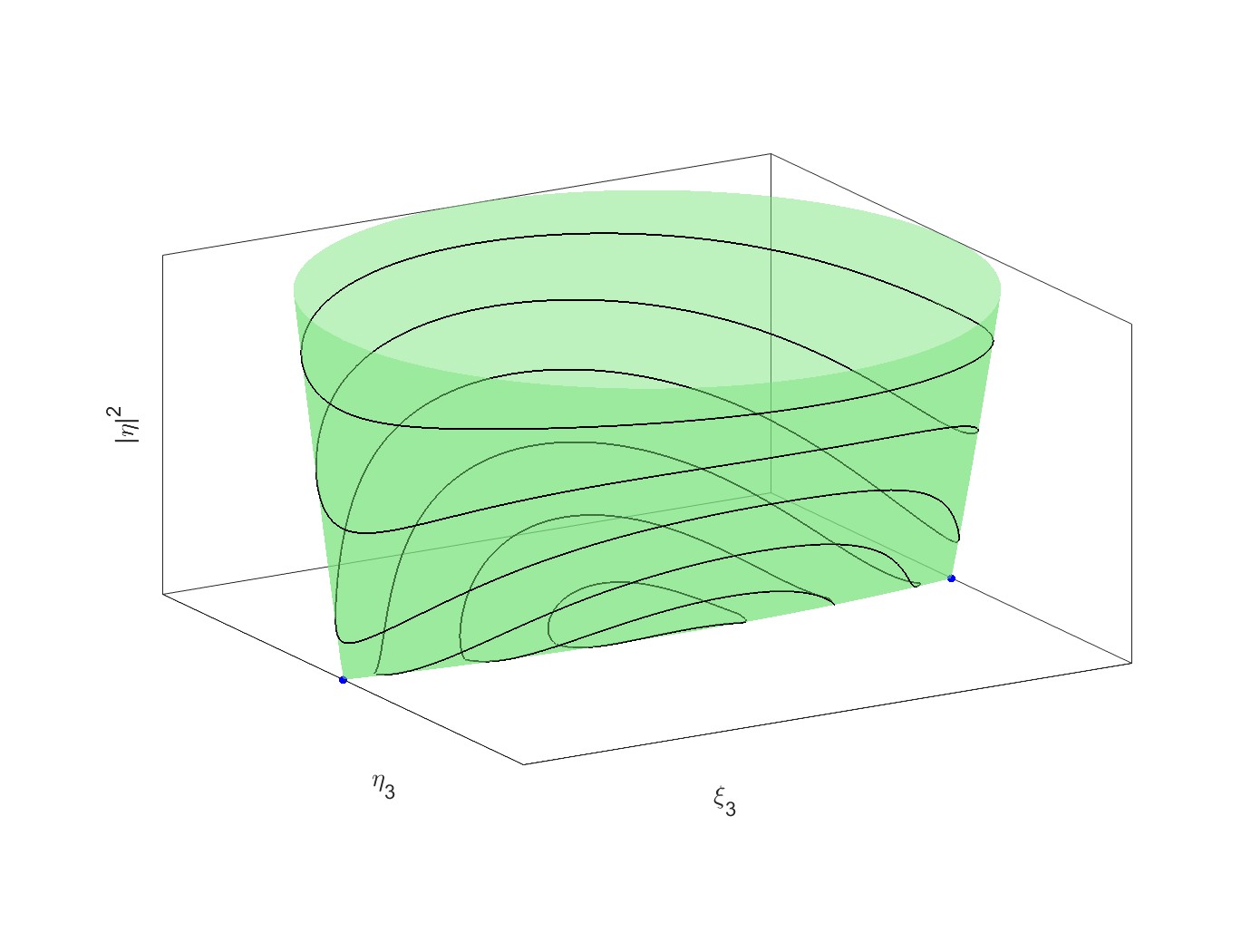}        
        \caption{Alternative foliation.}
    \end{subfigure}
    
    \caption{Different foliations for \( X_0^2 \).}
    \label{fig:folX20}
\end{figure}

As shown in the final part of the proof of Proposition~\ref{generalmeth2}, the singular fibers of the Lagrangian torus fibration correspond to base
points \(q\in X_\lambda^2\) for which the fiber
\(f_\lambda^{-1}(q)\) is not homeomorphic to \(S^1\). These singularities arise precisely at the fixed points of the circle action, where \( \xi_1 = \xi_2 = \eta_1 = \eta_2 = 0 \). From the defining equations of \( D^*S \), this implies \( \mu = u(\xi_3) = 0 \) and \( \eta_3 = 0 \), which only occur at the endpoints \( \xi_3 = a \) and \( \xi_3 = b \). These points are marked in blue in Figure~\ref{fig:folX20}.

To ensure that the convex base diagram contains only nodal singularities, we arrange the foliation on \( X^2_0 \) so that no leaf intersects both points \( (a, 0, 0) \) and \( (b, 0, 0) \) simultaneously. This setup yields exactly two nodal singularities; see Figure~\ref{fig:convex_base_11c_2}.

Next, we aim to determine the shape of the base diagram as in \cite[Section~2]{achigandrango2024singular}. For this purpose, we continue using the foliation given by the norm of the cotangent fibers. Note that changing the foliation does not affect the boundary of the base diagram; indeed, the boundary is determined by the limit of the curve as \( |\eta| \to 1 \).

Now, for \( \mu \neq 0 \) and fixed \( |\eta| \), we compute the symplectic area \( \mathcal{A}_S(\mu, |\eta|) \) of a disk inside \( D^*S \) whose boundary projects to the curve \( \gamma_{\mu, |\eta|} \) from part~\ref{claim: iii}. Observe that we can use the \( S^1 \)-action to rotate any point in \( f^{-1}_{\lambda}(\gamma_{\mu, |\eta|})  \) to one where \( \xi_1 = 0 \). There is exactly one loop in \( f^{-1}_{\lambda}(\gamma_{\mu, |\eta|}) \) satisfying equation~\eqref{eq: relation} with \( \xi_1 = 0 \). We will use this loop to compute the area of the disk.

We parameterize this loop using the coordinate \( \xi_3 \). To simplify notation, from now on we will
use $\mu=\xi_1\eta_2-\xi_2\eta_1$, omitting the factor of $2\pi$ present in the original definition of $\mu$. This is merely a rescaling of $\mu$, and we can reinstate the original definition at the end of the section if necessary. Assuming $\xi_3 \neq a, b$, equations \eqref{eq: relation}, \eqref{eq:2xi3}, \eqref{eq:2mu}, and \eqref{eq: T*S}, respectively, gives us that:
\begin{align*}
\eta_3 &= -\sqrt{\frac{u(\xi_3)^2 |\eta|^2 - \mu^2}{u(\xi_3)^2 (1 + u'(\xi_3)^2)}}, \\
\xi_2 &= u(\xi_3), \\
\eta_1 &= -\frac{\mu}{u(\xi_3)}, \\
\eta_2 &= -\sqrt{\frac{u(\xi_3)^2 |\eta|^2 - \mu^2}{u(\xi_3)^2 (1 + u'(\xi_3)^2)}} u'(\xi_3).
\end{align*}

Note that the equation $u(\xi_3)^2 |\eta|^2 - \mu^2 = 0$ has exactly two solutions except when, $|\eta|^2=\frac{\mu^2}{u(0)^2}$, in this case $\xi_3=0$ is the unique solution.  Let $\xi_{3_{\max}}^{\eta, \mu} > \xi_{3_{\min}}^{\eta, \mu}$ denote the two solutions to this equation. By Stokes' Theorem, the area of the disk $D_{\mu,|\eta|}$ with boundary projecting to the curve $\gamma_{\mu,|\eta|}$, is given by:

\begin{equation} \label{eq: splitting}
\begin{aligned}
    \mathcal{A}_S(\mu, |\eta|) &= \int_{D_{\mu,|\eta|}} \omega, \\
	 &= \int_{\partial D_{\mu,|\eta|}} -(\eta_1 d\xi_1 + \eta_2 d\xi_2 + \eta_3 d\xi_3), \\
	 &= \int_{\partial D_{\mu,|\eta|}} -(\eta_2 d\xi_2 + \eta_3 d\xi_3), \quad \text{since $\xi_1 = 0$}, \\
	 &= 2 \int_{\xi_{3_{\min}}^{\eta, \mu}}^{\xi_{3_{\max}}^{\eta, \mu}} - \eta_3\left( u'(\xi_3)^2+1 \right)d\xi_3, \quad \text{since $\eta_2 = u'(\xi_3)\eta_3$ and $\xi_2=u(\xi_3)$},\\
	 &= 2 \int_{\xi_{3_{\min}}^{\eta, \mu}}^{\xi_{3_{\max}}^{\eta, \mu}} \sqrt{\frac{u(\xi_3)^2 |\eta|^2 - \mu^2}{u(\xi_3)^2 (1 + u'(\xi_3)^2)}}( u'(\xi_3)^2+1)~d\xi_3.
\end{aligned}
\end{equation}

Then, the full moment map for the almost toric fibration is given by 
$$(\xi, \eta) \mapsto \left(\mu(\xi, \eta), \mathcal{A}_S(\mu(\xi, \eta), |\eta|)\right),$$ 
and the base $B_S$ of this fibration is bounded by the $x$-axis and the curve parametrized by $\mu \mapsto \left(\mu, \mathcal{A}_S(\mu, 1)\right)$, for $\mu \in [-u(0),u(0)]$.

We are now left to locate the singular values of this fibration and show that they correspond to nodal singularities as claimed. As we pointed out above, the singular values occur when $\mu = 0$, that is, over the positive $y$-semi-axis. As in Example \ref{ex: toy model}, there will be exactly two such singular values, also corresponding to nodal singularities, and careful computations can show exactly where they are located along the axis. However, the exact position along the $y$-axis is not too relevant here, given that these correspond to nodal singularities and the operation of nodal slide along the eigenline of the monodromy is allowed for such singularities, see \cite{symington71four}. These nodal slides correspond to a modification of the foliation used to describe the singular Lagrangian torus fibration.
\end{proof}

We now focus on the particular case of ellipsoids of revolutions of the form $\mathcal{E}(1,1,c)$. In this case we can give the precise description of the image of the base diagram of the almost toric fibration in Theorem \ref{thm: ATF_surface_revolution}. That is the content of Proposition \ref{prop: ellipsoids}, whose proof we now provide.

\begin{proof}[Proof of Proposition \ref{prop: ellipsoids}]
In the notation of Theorem \ref{thm: ATF_surface_revolution}, let us take:
\[ u(\xi_3) = \sqrt{1 - \frac{\xi_3^2}{c^2}}, \]
for $c > 0$. We use equation \eqref{eq: splitting}, obtained in the proof of Theorem \ref{thm: ATF_surface_revolution}, to get

\[\mathcal{A}_S(\mu, |\eta|) = 2 \int_{\xi_{3_{\min}}^{\eta, \mu}}^{\xi_{3_{\max}}^{\eta, \mu}} \sqrt{\frac{|\eta|^2(c^2-\xi_3^2) - c^2\mu^2}{c^2(c^2-\xi_3^2)+\xi_3^2}}~\frac{c^2(c^2-\xi_3^2)+\xi_3^2}{c(c^2-\xi_3^2)}~d\xi_3.\]

Notice that, in this case, we have that $\xi_{3_{\min}}^{\eta,\mu}=-c\sqrt{\frac{|\eta|^2-\mu^2}{|\eta|^2}}$ and $\xi_{3_{\max}}^{\eta,\mu}=c\sqrt{\frac{|\eta|^2-\mu^2}{|\eta|^2}}$. We now make the change of variable, using $\xi_3=c\sqrt{\frac{|\eta|^2-\mu^2}{|\eta|^2}}\cos \theta$. 
\begin{align*}
	\mathcal{A}_S(\mu, |\eta|) =& 2 |\eta| \int_{0}^{\pi} \frac{\sin^2\theta (|\eta|^2-\mu^2)(|\eta|^2+(c^2-1)(\mu^2+(|\eta|^2-\mu^2)\sin^2\theta))}{(\mu^2+(|\eta|^2-\mu^2)\sin^2\theta)\sqrt{|\eta|^2+(c^2-1)(\mu^2+(|\eta|^2-\mu^2)\sin^2\theta)}} ~d\theta\\
	=&4  |\eta| \left( \int_{0}^{\pi/2} \sqrt{|\eta|^2+(c^2-1)(\mu^2+(|\eta|^2-\mu^2)\sin^2\theta)} ~d\theta \right. \\
	& -(c^2-1) \mu^2 \int_{0}^{\pi/2} \frac{1}{ \sqrt{|\eta|^2+(c^2-1)(\mu^2+(|\eta|^2-\mu^2)\sin^2\theta)}}~d\theta\\
	& -\mu^2|\eta^2|\left. \int_{0}^{\pi/2} \frac{1}{(\mu^2+(|\eta|^2-\mu^2)\sin^2\theta)\sqrt{|\eta|^2+(c^2-1)(\mu^2+(|\eta|^2-\mu^2)\sin^2\theta)}} ~d\theta\right).
	\end{align*}

For $\mu\neq 0$, make $D(\mu,\eta):=\frac{|\eta|^2-\mu^2}{\mu^2}$ and  $B_c(\mu,\eta):=|\eta|^2+(c^2-1)\mu^2$, for every $c>0$. Then,

\begin{equation}\label{eq:bound_cbd_11c}
\begin{aligned}
\mathcal{A}_S(\mu,|\eta|)
&=
4|\eta|\Bigg(
\sqrt{B_c(\mu,\eta)}
E\left(
1-\frac{|\eta|^2c^2}{B_c(\mu,\eta)}
\right)
\\
&\qquad
-\frac{(c^2-1)\mu^2}{\sqrt{B_c(\mu,\eta)}}
F\left(
1-\frac{|\eta|^2c^2}{B_c(\mu,\eta)}
\right)
\\
&\qquad
-\frac{|\eta|^2}{\sqrt{B_c(\mu,\eta)}}
\Pi\left(
-D(\mu,\eta),
1-\frac{|\eta|^2c^2}{B_c(\mu,\eta)}
\right)
\Bigg).
\end{aligned}
\end{equation}

In the last equation, we are using the definition of the complete elliptical integrals of first, second and third kinds. We now recall their definitions. For $k,n<1$, let
\begin{equation*}
\begin{aligned}
F(k)&=F\left(\frac{\pi}{2}\Bigm\vert k\right)=\int_0^{\pi/2}\frac{d\theta}{\sqrt{1-k\sin^2\theta}},\\
E(k)&=E\left(\frac{\pi}{2}\Bigm\vert k\right)=\int_0^{\pi/2}\sqrt{1-k\sin^2\theta}\,d\theta,\\
\Pi(n,k)&=\Pi\left(n;\frac{\pi}{2}\Bigm\vert k\right)=\int_0^{\pi/2}\frac{d\theta}{(1-n\sin^2\theta)\sqrt{1-k\sin^2\theta}}.
\end{aligned}
\end{equation*}

\begin{figure}[h!]
    \centering
    \begin{subfigure}[b]{0.32\textwidth}
        \centering
        \includegraphics[width=\textwidth]{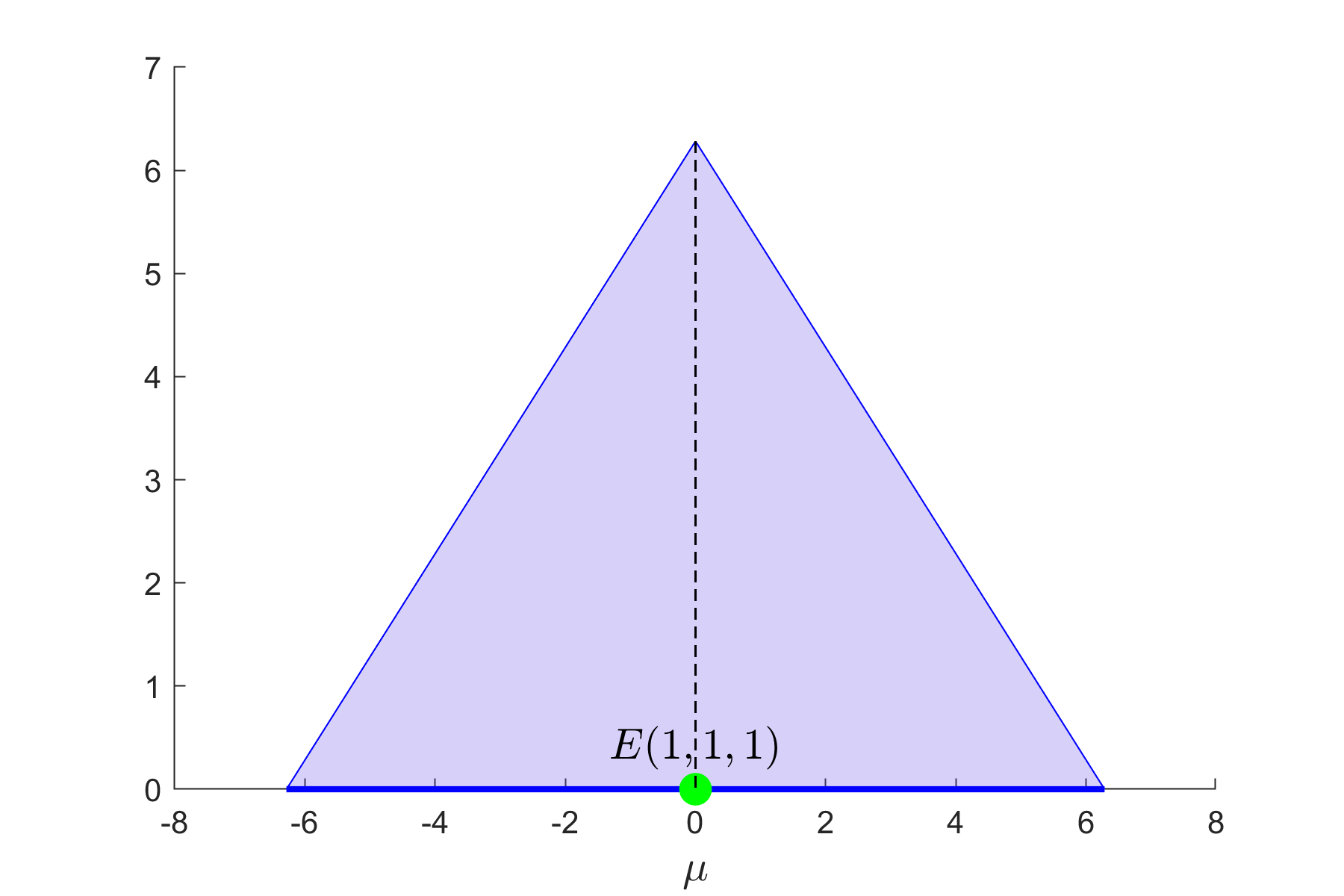}
        \caption{$c=1$}
        \label{fig:convex_111}
    \end{subfigure}
    \hfill
    \begin{subfigure}[b]{0.32\textwidth}
        \centering
        \includegraphics[width=\textwidth]{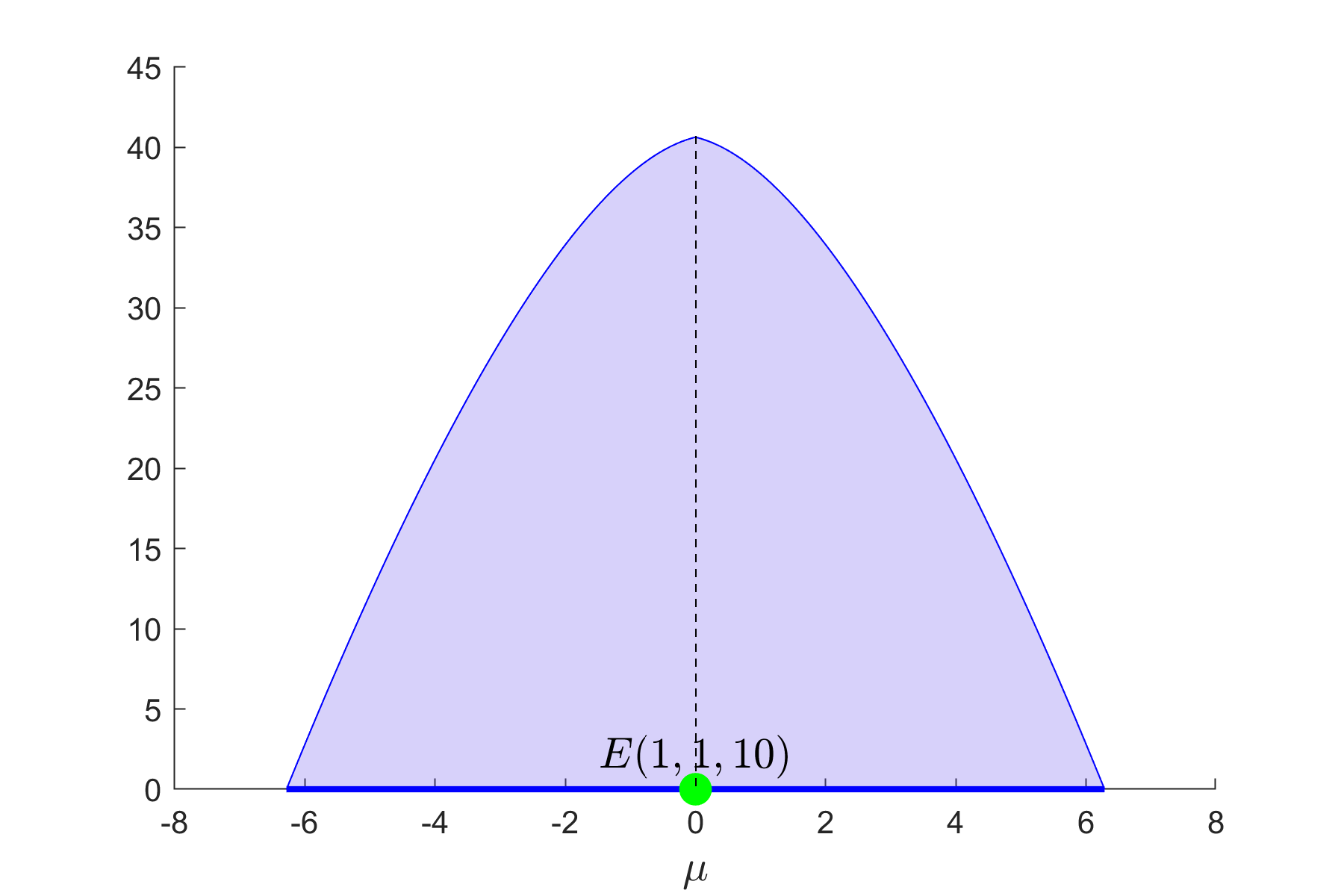}
        \caption{$c=10$}
        \label{fig:convex_1110}
    \end{subfigure}
    \hfill
    \begin{subfigure}[b]{0.32\textwidth}
        \centering
        \includegraphics[width=\textwidth]{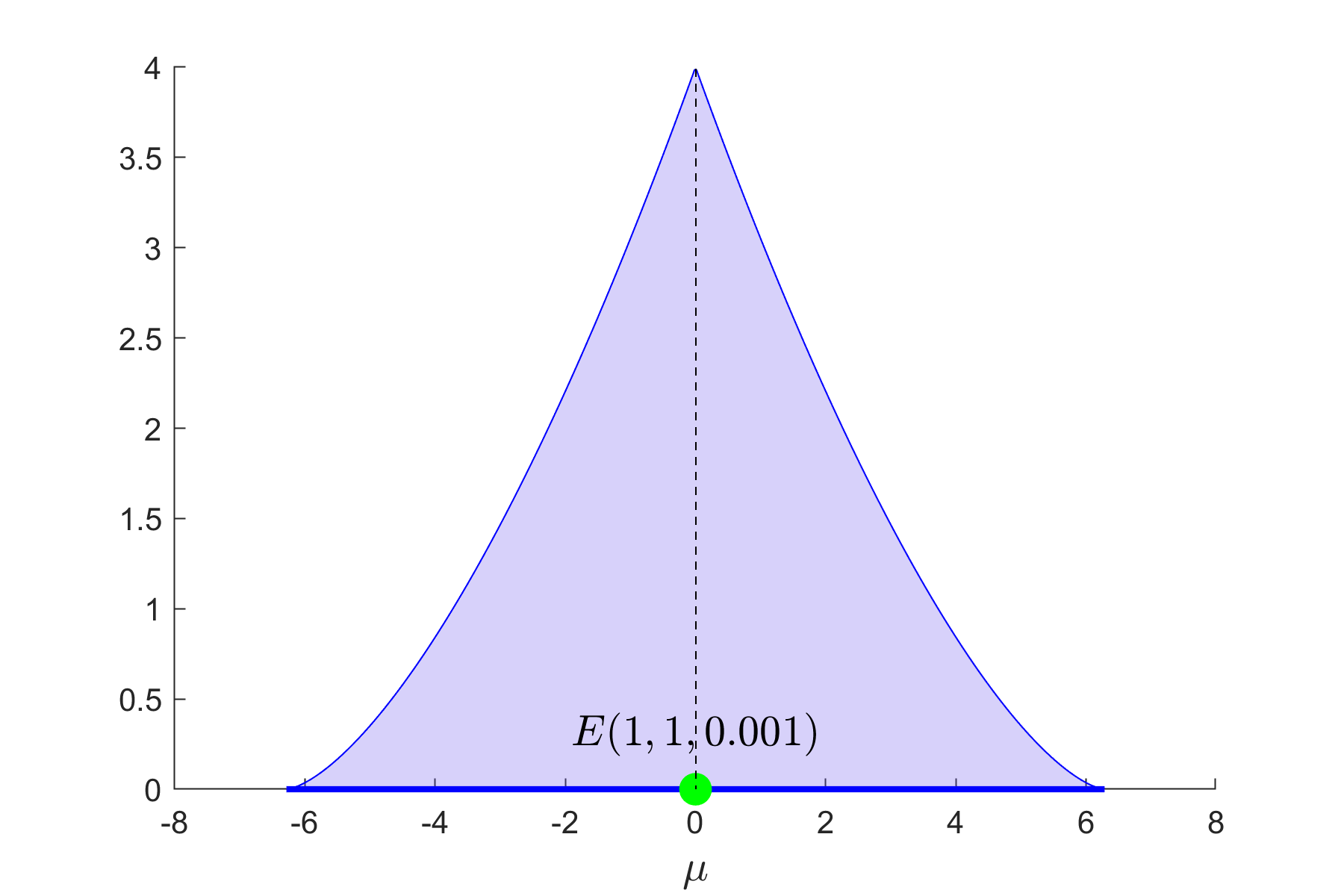}
        \caption{$c=0.001$}
        \label{fig:convex_1105}
    \end{subfigure}
    \caption{Base diagram of $D^*\mathcal{E}(1,1,c)$.}
    \label{fig: convex_base_11c}
\end{figure}

This concludes our computations for the symplectic area of the disk and the characterization of the base diagram. Note that using the foliation given by the norm of the cotangent fibers, we obtain a base diagram where the zero section of the cotangent bundle maps to a point, see Figure \ref{fig: convex_base_11c}. However, by modifying the foliation of $X^2_0$, we can obtain a convex base diagram with two nodal singularities as explained in the proof of Theorem \ref{thm: ATF_surface_revolution}, see Figure \ref{fig:convex_base_11c_2}.
\end{proof}

\subsection{Disk cotangent bundle of 3-dimensional  ellipsoids $\mathcal{E}(1,1,c,c)$}
\begin{proof}[Proof of Theorem \ref{thm: highellips}]

Let's consider $ D^*\mathcal{E}(1,1,c,c) \subset \mathbb{C}^4$ given by:

\begin{equation}\label{eq11cc}
D^*\mathcal{E}(1,1,c,c):= \left\{
  \xi +i \eta \in \mathbb{C}^4 \;\middle|\;
  \begin{aligned}
  & \xi_1^2+\xi_2^2+ \frac{\xi_3^2}{c^2}+\frac{\xi_4^2}{c^2}=1 \\
  & \xi_1\eta_1+\xi_2\eta_2+ \frac{\xi_3\eta_3}{c^2}+\frac{\xi_4\eta_4}{c^2}=0\\
  & |\eta|^2<1
  \end{aligned}
\right\},
\end{equation}
with the symplectic form: $\omega=\sum d\xi_j \wedge d\eta_j$. Define the $T^2$-action in $D^*\mathcal{E}(1,1,c,c)$ given by:
\[
\scalebox{0.95}{$
\begin{pmatrix}
	\xi_1\\
	\xi_2\\
	\xi_3\\
	\xi_4\\
	\eta_1\\
	\eta_2\\
	\eta_3\\
	\eta_4
\end{pmatrix} \longmapsto
\begin{pmatrix}
	\cos 2\pi \theta_1 & -\sin 2\pi\theta_1 & 0 & 0 & 0 & 0& 0 & 0 \\
	\sin 2\pi\theta_1 & \cos 2\pi\theta_1 & 0 & 0 & 0 & 0& 0 & 0\\
	0&0&\cos 2\pi\theta_2 & -\sin 2\pi\theta_2&0&0&0& 0\\
	0&0&\sin 2\pi\theta_2 & \cos 2\pi\theta_2&0&0&0& 0\\
	0&0&0&0& \cos2\pi\theta_1 & -\sin 2\pi\theta_1 & 0&0 \\
	0&0&0&0& \sin 2\pi\theta_1 & \cos 2\pi\theta_1 & 0&0 \\
	0&0&0&0&0&0&\cos2\pi\theta_2 & -\sin 2\pi\theta_2\\
	0&0&0&0&0&0&\sin2\pi\theta_2 & \cos 2\pi\theta_2\\
\end{pmatrix}
\begin{pmatrix}
	\xi_1\\
	\xi_2\\
	\xi_3\\
	\xi_4\\
	\eta_1\\
	\eta_2\\
	\eta_3\\
	\eta_4
\end{pmatrix}
$}
\]

This action has moment map
\[
\mu=(\mu_1,\mu_2)
:=
2\pi(\xi_1\eta_2-\xi_2\eta_1,\xi_3\eta_4-\xi_4\eta_3).
\]
Using this definition, the Cauchy--Schwarz inequality and \eqref{eq11cc}
give
\[
\left|
\frac{\mu_1}{2\pi}\pm\frac{\mu_2}{2\pi c}
\right|
=
\left|
\xi_1\eta_2-\xi_2\eta_1
\pm
\frac{\xi_3\eta_4-\xi_4\eta_3}{c}
\right|
\leq
|\eta|
\sqrt{
\xi_1^2+\xi_2^2+\frac{\xi_3^2}{c^2}+\frac{\xi_4^2}{c^2}
}
=
|\eta|.
\]
Therefore
\begin{equation}\label{maxeta_he}
|\eta|
\geq
\frac{1}{2\pi}
\max
\left\{
\left|\mu_1+\frac{\mu_2}{c}\right|,
\left|\mu_1-\frac{\mu_2}{c}\right|
\right\}.
\end{equation}

This inequality will be useful later. We consider the map defined by
\begin{align*}
	f: D^*\mathcal{E}(1,1,c,c) &\longrightarrow \mathbb{R}^3,\\
	(\xi,\eta) &\longmapsto (t_1,t_2,|\eta|^2)\\
	&:=
	\left(
	(\xi_1^2+\xi_2^2)-(\eta_1^2+\eta_2^2),
	2(\xi_1\eta_1+\xi_2\eta_2),
	|\eta|^2
	\right).
\end{align*}

We define \(f_\lambda:=f|_{\mu^{-1}(\lambda)}\). To use Proposition
\ref{generalmeth2}, we need to prove that
\(f(D^*\mathcal{E}(1,1,c,c)\cap\mu^{-1}(\lambda))\) is simply connected for
every \(\lambda\), and that the induced map \(\widetilde f_\lambda\) is a
bijection. In the following claim, we prove that
\(f(D^*\mathcal{E}(1,1,c,c)\cap\mu^{-1}(\lambda))\) is indeed simply
connected and provide a foliation of this set given by the norm of the
cotangent fibers. By abuse of notation, we refer to fixed values of the
moment map and a fixed value of the norm of the cotangent fibers as
\(\mu_1,\mu_2\), and \(|\eta|\), respectively.

\begin{clm} \label{clm: highellips}
\begin{enumerate}[label=(\roman*)]
    \item \label{claim_he: i} The following relation holds:
    \begin{equation} \label{eq: highellips}
    \left(
    \frac{\mu_1^2}{2\pi^2}
    -
    \frac{\mu_2^2}{2\pi^2c^2}
    +
    2|\eta|^2
    +
    t_1(1-|\eta|^2)
    +
    \frac{(1-c^2)t_2^2}{2}
    \right)^2
    =
    (1+|\eta|^2)^2
    \left(
    t_1^2+t_2^2+\frac{\mu_1^2}{\pi^2}
    \right).
    \end{equation}
    
    \item \label{claim_he: ii} For fixed \(\mu_1,\mu_2\) and
    \(|\eta|\neq 0\), the locus in the plane \((t_1,t_2)\) defined by
    equation \eqref{eq: highellips} is a closed curve or a point. For
    \(|\eta|=0\), we have \(\mu_1=\mu_2=0\), and the locus is
    \([0,1]\times\{0\}\), i.e. the interval \([0,1]\) along the \(t_1\)-axis.
    
    \item \label{claim_he: iii} The image
    \(f(D^*\mathcal{E}(1,1,c,c)\cap\mu^{-1}(\lambda))\) is simply connected
    and can be foliated using the norm of the cotangent fibers.
    
    \item \label{claim_he: iv} The induced map \(\widetilde f_\lambda\) is a
    bijection onto its image.
\end{enumerate}
\end{clm}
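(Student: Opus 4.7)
My approach is to mirror the proof of the analogous claim in the surface-of-revolution case from the previous subsection, with the role of the coordinate $\xi_3$ there played here by the pair $(a_1, t_2)$, where $a_1 := \xi_1^2 + \xi_2^2$ records the ``latitude'' on $\mathcal{E}(1,1,c,c)$ with respect to the $T^2$-decomposition into the two coordinate 2-planes. The central algebraic tool is the Lagrange identity $(u_1^2+u_2^2)(v_1^2+v_2^2) = (u_1v_1+u_2v_2)^2 + (u_1v_2-u_2v_1)^2$, applied independently to $(\xi_1,\xi_2,\eta_1,\eta_2)$ and $(\xi_3,\xi_4,\eta_3,\eta_4)$, reflecting the fact that the two $S^1$-factors of the $T^2$-action rotate these pairs separately.

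For \ref{claim_he: i}, I would introduce $a_i = \xi_{2i-1}^2 + \xi_{2i}^2$, $b_i = \eta_{2i-1}^2 + \eta_{2i}^2$, $c_i = \xi_{2i-1}\eta_{2i-1} + \xi_{2i}\eta_{2i}$ for $i = 1, 2$, so that Lagrange gives $a_i b_i = c_i^2 + \mu_i^2$. In these variables the defining equations of $D^*\mathcal{E}(1,1,c,c)$ become $a_1 + a_2/c^2 = 1$ and $c_1 + c_2/c^2 = 0$, while the fibration data read $t_1 = a_1 - b_1$, $t_2 = 2c_1$, and $|\eta|^2 = b_1 + b_2$. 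Setting $S := a_1 + b_1$, the elementary identity $S^2 = (a_1-b_1)^2 + 4a_1 b_1$ yields $S^2 = t_1^2 + t_2^2 + 4\mu_1^2$. Eliminating $a_2, b_2, c_2$ from $a_2 b_2 = c_2^2 + \mu_2^2$ via the linear constraints then produces precisely the linear expression $S(|\eta|^2+1) = 2\mu_1^2 - 2\mu_2^2/c^2 + 2|\eta|^2 + t_1(1-|\eta|^2) + (1-c^2)t_2^2/2$; squaring and substituting $S^2 = t_1^2+t_2^2+4\mu_1^2$ delivers \eqref{eq: highellips}. For \ref{claim_he: ii}, I would parametrize by $a_1$: the linear equation above uniquely determines $b_1$ (hence $b_2 = |\eta|^2 - b_1$) as a rational function of $a_1$, and then $t_2^2 = 4(a_1 b_1 - \mu_1^2) \geq 0$ confines $a_1$ to a closed subinterval of $[0,1]$. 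Letting $a_1$ traverse this interval while $t_2$ takes both signs traces out a simple closed curve in the $(t_1, t_2)$-plane, degenerating to a point when the interval collapses (precisely when inequality~\eqref{maxeta_he} saturates) and reducing to the segment $[0,1]\times\{0\}$ when $|\eta| = 0$ (in which case $\mu_1 = \mu_2 = 0$ is forced).

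For \ref{claim_he: iii}, as $|\eta|$ continuously decreases from its maximum on $D^*\mathcal{E}(1,1,c,c) \cap \mu^{-1}(\lambda)$, the curves $\gamma_{\mu,|\eta|}$ shrink and eventually collapse to a single point, so the image is foliated by nested simple closed curves about an interior point and is therefore homeomorphic to a closed disk; the exceptional case $\mu = (0,0)$, where the collapsing locus is an interval, can be refoliated by simple closed curves contracting onto it exactly as in Figure~\ref{fig:folX20}. For \ref{claim_he: iv}, I would use the $T^2$-action to normalize $\xi_1 = 0$ with $\xi_2 \geq 0$ and $\xi_3 = 0$ with $\xi_4 \geq 0$ on each orbit where $a_1, a_2 > 0$; the equations $\mu_1 = -\xi_2\eta_1$, $\mu_2 = -\xi_4\eta_3$, $t_2/2 = \xi_2\eta_2$, together with the cotangent constraint $\xi_2\eta_2 + \xi_4\eta_4/c^2 = 0$ and $a_1 + a_2/c^2 = 1$, uniquely determine $\xi_2, \xi_4, \eta_1, \eta_2, \eta_3, \eta_4$; the fixed-point loci $a_1 \in \{0, 1\}$, where one $S^1$-factor acts trivially, are handled by the same normalization applied only to the non-degenerate factor. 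The main technical obstacle will be the bookkeeping of the algebraic elimination in \ref{claim_he: i} and a careful case analysis in \ref{claim_he: ii} near the boundary of the allowed $a_1$-range, where the curve degenerates; everything else is a direct translation of the two-dimensional argument.
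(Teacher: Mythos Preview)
Your proposal is correct and uses the same essential ingredients as the paper: the Lagrange identity applied separately to each coordinate pair, and normalization via the $T^2$-action for injectivity in (iv). The execution differs in two places worth noting. For (i), the paper solves the two quadratics $a_ib_i=c_i^2+\mu_i^2$ individually for $b_1,b_2$ via the quadratic formula (obtaining $b_1=\tfrac{1}{2}\big(-t_1+\sqrt{t_1^2+t_2^2+4\mu_1^2}\big)$ and an analogous expression for $b_2$), imposes $b_1+b_2=|\eta|^2$, and then simplifies a sum of square roots; your introduction of $S=a_1+b_1$, together with $S^2=t_1^2+t_2^2+4\mu_1^2$ and the linear relation for $S$, reaches \eqref{eq: highellips} without ever writing a radical. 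For (ii), the paper parametrizes the locus by $t_2$ and analyzes a quartic in $t_2$, explicitly discarding two unbounded spurious components introduced by squaring; your parametrization by the latitude $a_1\in[0,1]$ is intrinsic to the physical locus, so no spurious branches appear and the constraint $a_1b_1(a_1)\geq\mu_1^2$ reduces to a single downward quadratic inequality in $a_1$. One wording point: ``the linear equation above'' in your plan for (ii) should not mean the $S$-equation from (i), which involves $t_2$ and is therefore not linear in $b_1$ for fixed $a_1$; what you need is the relation $c^2(1-a_1)(|\eta|^2-b_1)=c^4(a_1b_1-\mu_1^2)+\mu_2^2$, obtained by inserting $c_1^2=a_1b_1-\mu_1^2$ directly into $a_2b_2=c_2^2+\mu_2^2$, which is indeed linear in $b_1$.
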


\begin{proof}[Proof of Claim \ref{clm: highellips}]

\noindent\textbf{(i)} To prove the relation \eqref{eq: highellips}, consider
the following equations derived from the system:
\begin{align}
    \frac{\mu_1^2}{4\pi^2}+\frac{t_2^2}{4}
    &=
    (\xi_1^2+\xi_2^2)(\eta_1^2+\eta_2^2),
    \label{eq_he:eta1eta2p1}
    \\
    \xi_1^2+\xi_2^2
    &=
    t_1+(\eta_1^2+\eta_2^2),
    \label{eq_he:eta1eta2p2}
    \\
    \frac{\mu_2^2}{4\pi^2c^4}+\frac{t_2^2}{4}
    &=
    \frac{(\xi_3^2+\xi_4^2)(\eta_3^2+\eta_4^2)}{c^4},
    \label{eq_he:eta3eta4p1}
    \\
    \frac{\xi_3^2+\xi_4^2}{c^2}
    &=
    (\eta_3^2+\eta_4^2)+1-|\eta|^2-t_1.
    \label{eq_he:eta3eta4p2}
\end{align}

Solving the system for \(\eta_1^2+\eta_2^2\) and
\(\eta_3^2+\eta_4^2\) yields
\begin{align}
    \eta_1^2+\eta_2^2
    &=
    \frac{-t_1+\sqrt{t_1^2+t_2^2+\frac{\mu_1^2}{\pi^2}}}{2},
    \label{eq_he:normeta1}
    \\
    \eta_3^2+\eta_4^2
    &=
    \frac{
    t_1+|\eta|^2-1+
    \sqrt{
    (t_1+|\eta|^2-1)^2
    +
    \frac{\mu_2^2}{\pi^2c^2}
    +
    c^2t_2^2
    }
    }{2}.
    \label{eq_he:normeta2}
\end{align}

Adding \eqref{eq_he:normeta1} and \eqref{eq_he:normeta2}, and performing
algebraic simplifications, gives equation \eqref{eq: highellips}.

\vspace{1em}
\noindent\textbf{(ii)} For \(|\eta|=0\), it follows that
\(\mu_1=\mu_2=t_2=0\) and \(t_1\in[0,1]\), hence the locus is
\([0,1]\times\{0\}\).

For \(|\eta|\neq 0\), equation \eqref{eq: highellips} can be rewritten as
\begin{equation} \label{eq:closed_curve}
\begin{aligned}
&\frac{1}{1+|\eta|^2}
\left(
2|\eta|^2t_1
-
(1-|\eta|^2)
\left(
\frac{\mu_1^2}{4\pi^2}
-
\frac{\mu_2^2}{4\pi^2c^2}
+
|\eta|^2
+
\frac{(1-c^2)t_2^2}{4}
\right)
\right)^2
\\
&\qquad =
\left(
\frac{\mu_1^2}{4\pi^2}
-
\frac{\mu_2^2}{4\pi^2c^2}
+
|\eta|^2
+
\frac{(1-c^2)t_2^2}{4}
\right)^2
-
|\eta|^2
\left(
t_2^2+\frac{\mu_1^2}{\pi^2}
\right).
\end{aligned}
\end{equation}

When \(c=1\), this is the equation of an ellipse. For \(c\neq 1\), the
right-hand side of \eqref{eq:closed_curve} is a quartic polynomial in \(t_2\)
that tends to \(+\infty\) as \(t_2\to\pm\infty\). Due to its structure, it has
at most four real roots, and the region where it is non-negative consists of a
compact set, containing \(t_2=0\), and two unbounded components. In our
analysis, we focus on the compact set around \(t_2=0\), where the polynomial
is non-negative and the corresponding solution set is well-defined:
\begin{equation} \label{eq:intervalt2}
|t_2|
\leq
2\sqrt{
\frac{
\left(
\frac{\mu_2^2}{4\pi^2c^2}
-
\frac{\mu_1^2}{4\pi^2}
\right)(1-c^2)
+
|\eta|
\left(
|\eta|(1+c^2)
-
2\sqrt{
\left(
\frac{\mu_2^2}{4\pi^2c^2}
-
\frac{\mu_1^2c^2}{4\pi^2}
\right)(1-c^2)
+
c^2|\eta|^2
}
\right)
}{
(1-c^2)^2
}
}.
\end{equation}
This interval degenerates to zero whenever
\[
|\eta|
=
\left|
\frac{\mu_1}{2\pi}
\pm
\frac{\mu_2}{2\pi c}
\right|.
\]
By \eqref{maxeta_he},
\[
|\eta|
\in
\left[
\frac{1}{2\pi}
\max
\left\{
\left|\mu_1+\frac{\mu_2}{c}\right|,
\left|\mu_1-\frac{\mu_2}{c}\right|
\right\},
1
\right),
\]
so the interval defined by the inequality in \eqref{eq:intervalt2} collapses
to a point whenever
\[
|\eta|
=
\frac{1}{2\pi}
\max
\left\{
\left|\mu_1+\frac{\mu_2}{c}\right|,
\left|\mu_1-\frac{\mu_2}{c}\right|
\right\}.
\]

For each \(t_2\) in the interior of the interval given by
\eqref{eq:intervalt2}, where the right-hand side of \eqref{eq:closed_curve}
is strictly positive, there are exactly two corresponding values of \(t_1\)
satisfying the equation. At the endpoints of the interval, where the
polynomial vanishes, there is exactly one solution for \(t_1\). Since the
right-hand side depends continuously on \(t_2\), and the solutions change
continuously, the full set of solutions to \eqref{eq:closed_curve} for
\(t_2\) in the interval \eqref{eq:intervalt2} forms a simple closed curve in
the \((t_1,t_2)\)-plane. We denote these simple closed curves by
\(\gamma_{\mu,|\eta|}\).

\vspace{1em}
\noindent\textbf{(iii)} This follows directly from (ii). The sets
\(\gamma_{\mu,|\eta|}\) defined by equation \eqref{eq:closed_curve} form a
family of closed curves. As \(|\eta|\) becomes smaller, these closed curves
retract to a line if \(\mu_1=\mu_2=0\), or to a point otherwise. This gives a
foliation of the image by level sets of the norm in the cotangent fiber.

\vspace{1em}
\noindent\textbf{(iv)} To prove the injectivity of \(\widetilde f_\lambda\),
we show that the preimage of each point consists of exactly one
\(T^2\)-orbit.

From equations \eqref{eq_he:normeta1}, \eqref{eq_he:normeta2},
\eqref{eq_he:eta1eta2p2}, and \eqref{eq_he:eta3eta4p2}, we see that, for
fixed \(\mu_1,\mu_2\), the quantities
\[
\xi_1^2+\xi_2^2,\qquad
\eta_1^2+\eta_2^2,\qquad
\xi_3^2+\xi_4^2,\qquad
\eta_3^2+\eta_4^2
\]
are uniquely determined by \(t_1,t_2\), and \(|\eta|\).

Consider first the \(S^1\)-action associated to \(\mu_1\). If
\(\xi_1^2+\xi_2^2=0\), then the base component vanishes, and the fiber norm
\(\eta_1^2+\eta_2^2\) is fixed; thus, the orbit is determined by rotations in
the plane \((\eta_1,\eta_2)\), yielding a unique \(S^1\)-orbit. If instead
\(\xi_1^2+\xi_2^2\neq 0\), we may use the \(S^1\)-action to set
\(\xi_1=0\). In this case, the conditions
\[
\eta_1=-\frac{\mu_1}{2\pi\xi_2},
\qquad
\eta_2=\frac{t_2}{2\xi_2}
\]
determine the fiber coordinates uniquely, so again, the orbit is uniquely
specified.

A completely analogous argument applies to the \(S^1\)-action associated to
\(\mu_2\). Indeed, if \(\xi_3^2+\xi_4^2\neq 0\), we may use this circle action
to set \(\xi_3=0\). Then
\[
\eta_3=-\frac{\mu_2}{2\pi\xi_4},
\qquad
\eta_4=-\frac{c^2t_2}{2\xi_4},
\]
where the second identity follows from the cotangent condition in
\eqref{eq11cc}. If \(\xi_3^2+\xi_4^2=0\), the orbit is again determined by
the corresponding fiber norm. Hence each point in the image has a unique
\(T^2\)-orbit in its preimage, establishing that \(\widetilde f_\lambda\) is
injective.
\end{proof}

To construct an almost toric fibration on \(D^*\mathcal{E}(1,1,c,c)\), we
start by examining the foliation induced by the norm of the cotangent fibers.
This foliation collapses the zero section to a point in the base diagram,
similarly to the case of \(D^*S\). However, for the level set
\(\mu_1=\mu_2=0\), we have the freedom to choose a different foliation. In
particular, we may select a foliation consisting of simple closed curves that
retract to a point, which gives us flexibility in shaping the base diagram
and controlling the singularities.

According to the final part of the proof of Proposition~\ref{generalmeth2},
the singular fibers of the Lagrangian torus fibration correspond to those base
points \(q\) for which \(f^{-1}(q)\cap\mu^{-1}(\lambda)\) is not
homeomorphic to \(T^2\). These singularities are associated with fixed points
of the circle actions generating \(\mu_1\) and \(\mu_2\).

The fixed points of the circle action associated with \(\mu_1\) occur where
\[
\xi_1=\xi_2=\eta_1=\eta_2=0.
\]
From the defining equations~\eqref{eq11cc}, this implies
\[
\mu_1=0,\qquad t_1=t_2=0,
\]
while \(\mu_2\) varies in the interval \((-2\pi c,2\pi c)\). More precisely,
for fixed \(\mu_2\), this fixed-point locus maps in \(X_\lambda^2\) to
\[
(t_1,t_2,|\eta|^2)
=
\left(
0,
0,
\frac{\mu_2^2}{4\pi^2c^2}
\right).
\]

On the other hand, the fixed points of the circle action associated with
\(\mu_2\) satisfy
\[
\xi_3=\xi_4=\eta_3=\eta_4=0.
\]
Thus
\[
\mu_2=0,
\]
while \(\mu_1\) varies in the interval \((-2\pi,2\pi)\). For fixed
\(\mu_1\), this fixed-point locus maps in \(X_\lambda^2\) to
\[
(t_1,t_2,|\eta|^2)
=
\left(
1-\frac{\mu_1^2}{4\pi^2},
0,
\frac{\mu_1^2}{4\pi^2}
\right).
\]

In particular, over the zero moment level \(\mu_1=\mu_2=0\), these two fixed
loci give the two points
\[
(0,0,0)
\qquad\text{and}\qquad
(1,0,0)
\]
in \(X_0^2\).

To ensure the base diagram contains only nodal singularities, we choose a
foliation on
\[
f(D^*\mathcal{E}(1,1,c,c)\cap\mu^{-1}(0))
\]
consisting of simple closed curves that retract to a point, and such that no
curve intersects both \((0,0,0)\) and \((1,0,0)\). Note that in this case, the
\(T^2\)-action does not collapse to a point, since \(|\xi|=0\) is never
attained on this space.

We now proceed to determine the shape of the base diagram, following the
construction in~\cite[Section 2]{achigandrango2024singular}. For this
purpose, we return to using the foliation induced by the norm of the cotangent
fibers. It is important to observe that the boundary of the base diagram
remains unchanged under different choices of foliation; indeed, it is
determined by the image of the hypersurface \(|\eta|=1\).

Now we compute, for \(|\eta|\) fixed, the symplectic area
\(\mathcal A_c(\mu,|\eta|)\) of a disk inside
\(D^*\mathcal{E}(1,1,c,c)\) with boundary on the Lagrangian
\(L_{\mu,|\eta|}\) that projects to the curve \(\gamma_{\mu,|\eta|}\) in
\ref{claim_he: iii}. Notice that we can act with the \(T^2\)-action on
\(f^{-1}(q)\cap\mu^{-1}(\lambda)\) and assume \(\xi_1=\xi_3=0\). There is
only one loop inside \(f^{-1}(\gamma_{\mu,|\eta|})\cap\mu^{-1}(\lambda)\)
satisfying \eqref{eq11cc} with \(\xi_1=\xi_3=0\). We will use this loop to
compute the area of the disk.

\begin{align*}
	\mathcal A_c(\mu,|\eta|)
	=
	\int_D\omega
	&=
	\int_{\partial D}
	(\xi_1d\eta_1+\xi_2d\eta_2+\xi_3d\eta_3+\xi_4d\eta_4),
	\\
	&=
	\int_{\partial D}(\xi_2d\eta_2+\xi_4d\eta_4),
	\quad \text{since \(\xi_1=\xi_3=0\)}.
\end{align*}
By using the defining equations for the case \(\xi_1=\xi_3=0\), we get
\[
\eta_4
=
-\sqrt{
\frac{
\left(
|\eta|^2c^2\xi_4^2
-
\frac{\mu_2^2c^2}{4\pi^2}
\right)(c^2-\xi_4^2)
-
\frac{c^4\mu_1^2\xi_4^2}{4\pi^2}
}{
\xi_4^2\left(\xi_4^2+c^2(c^2-\xi_4^2)\right)
}
},
\]
\[
\xi_2=\sqrt{1-\frac{\xi_4^2}{c^2}},
\qquad
\eta_2=-\frac{\xi_4\eta_4}{c^2\xi_2}.
\]
From this, we obtain an integral over an interval inside the possible values
of \(\xi_4\). The interval that we use is
\((\xi_4^{\min},\xi_4^{\max})\), where
\[
\xi_4^{\min}
=
\frac{1}{|\eta|}
\sqrt{
\frac{
\frac{\mu_2^2}{4\pi^2}
+
c^2
\left(
|\eta|^2-\frac{\mu_1^2}{4\pi^2}
\right)
-
\sqrt{
-4\frac{\mu_2^2}{4\pi^2}c^2|\eta|^2
+
\left(
\frac{\mu_2^2}{4\pi^2}
+
c^2
\left(
|\eta|^2-\frac{\mu_1^2}{4\pi^2}
\right)
\right)^2
}
}{2}
},
\]
and
\[
\xi_4^{\max}
=
\frac{1}{|\eta|}
\sqrt{
\frac{
\frac{\mu_2^2}{4\pi^2}
+
c^2
\left(
|\eta|^2-\frac{\mu_1^2}{4\pi^2}
\right)
+
\sqrt{
-4\frac{\mu_2^2}{4\pi^2}c^2|\eta|^2
+
\left(
\frac{\mu_2^2}{4\pi^2}
+
c^2
\left(
|\eta|^2-\frac{\mu_1^2}{4\pi^2}
\right)
\right)^2
}
}{2}
}.
\]
Let
\[
A:=A(\mu_1,\mu_2,|\eta|,c)
:=
-4\frac{\mu_2^2}{4\pi^2}|\eta|^2c^2
+
\left(
\frac{\mu_2^2}{4\pi^2}
+
c^2
\left(
|\eta|^2-\frac{\mu_1^2}{4\pi^2}
\right)
\right)^2,
\]
and
\[
B:=B(\mu_1,\mu_2,|\eta|,c)
:=
\frac{\mu_2^2}{4\pi^2}
-
\frac{\mu_1^2c^2}{4\pi^2}
+
\sqrt A
+
c^2|\eta|^2.
\]

After direct computations, we get
\begin{equation}\label{eq: boundary_11cc}
\begin{aligned}
    \mathcal A_c(\mu,|\eta|)
    &=
    2
    \int_{\xi_4^{\min}}^{\xi_4^{\max}}
    \frac{
    \sqrt{
    \left(
    \left(|\eta|^2\xi_4^2-\frac{\mu_2^2}{4\pi^2}\right)(c^2-\xi_4^2)
    -
    \frac{c^2\mu_1^2\xi_4^2}{4\pi^2}
    \right)
    \left(\xi_4^2+c^2(c^2-\xi_4^2)\right)
    }
    }{
    c\xi_4(c^2-\xi_4^2)
    }
    \,d\xi_4
    \\
    &=
    \frac{\sqrt{2}}{c}
    \sqrt{
    \frac{1}{
    (1-c^2)B+2c^4|\eta|^2
    }
    }
    \Bigg(
    \left((1-c^2)B+2c^4|\eta|^2\right)
    E\left(
    \frac{
    2(1-c^2)\sqrt A
    }{
    (1-c^2)B+2c^4|\eta|^2
    }
    \right)
    \\
    &\qquad
    -
    2(1-c^2)
    \left(
    \frac{\mu_2^2}{4\pi^2}
    -
    \frac{\mu_1^2c^2}{4\pi^2}
    \right)
    F\left(
    \frac{
    2(1-c^2)\sqrt A
    }{
    (1-c^2)B+2c^4|\eta|^2
    }
    \right)
    \\
    &\qquad
    -
    \frac{
    4c^4|\eta|^2\frac{\mu_2^2}{4\pi^2}
    }{B}
    \Pi\left(
    \frac{2\sqrt A}{B},
    \frac{
    2(1-c^2)\sqrt A
    }{
    (1-c^2)B+2c^4|\eta|^2
    }
    \right)
    \\
    &\qquad
    +
    \frac{
    4c^4|\eta|^2\frac{\mu_1^2}{4\pi^2}
    }{
    B-2c^2|\eta|^2
    }
    \Pi\left(
    \frac{2\sqrt A}{B-2c^2|\eta|^2},
    \frac{
    2(1-c^2)\sqrt A
    }{
    (1-c^2)B+2c^4|\eta|^2
    }
    \right)
    \Bigg).
\end{aligned}
\end{equation}
\end{proof}

The image of the base diagram
\[
(\mu_1,\mu_2,\mathcal A_c(\mu,|\eta|))
\]
for \(|\eta|\leq 1\) was generated using \textsc{Matlab} and is displayed in
Figure~\ref{fig: convex_base_11cc}.

\subsection{The Lagrangian bidisk}
\begin{proof}[Proof of Theorem \ref{thm: lagbid}]

Recall that we define the Lagrangian bidisk as
\[
P_L = \{(p_1, p_2, q_1, q_2) \in \mathbb{R}^4 \mid p_1^2 + p_2^2 \leq 1,\ q_1^2 + q_2^2 \leq 1\},
\]
equipped with the standard symplectic form \( \omega = \sum dp_j \wedge dq_j \).

Define the circle action in $P_L$, given by:
\[ \begin{pmatrix}
	p_1\\
	p_2\\
	q_1\\
	q_2
\end{pmatrix} \longmapsto \begin{pmatrix}
	\cos2\pi\theta & -\sin 2\pi\theta & 0 & 0  \\
	\sin 2\pi\theta & \cos 2\pi\theta & 0 & 0 \\
	0&0&\cos2\pi\theta & -\sin 2\pi\theta\\
	0&0& \sin 2\pi\theta & \cos 2\pi\theta\\
\end{pmatrix} \begin{pmatrix}
	p_1\\
	p_2\\
	q_1\\
	q_2
\end{pmatrix}.\] 
This action has moment map $\mu = 2\pi( p_1 q _2 - p_2 q_1)$. Consider the fibration:
\begin{align*}
	f:P_L&\longrightarrow \mathbb{R}^2,\\
	(p,q)& \longmapsto (t_1,t_2),
\end{align*} where
\begin{align*}
t_1&=p_1q_1+p_2q_2,\\
	t_2&=p_1^2+p_2^2-q_1^2-q_2^2=|p|^2-|q|^2.
\end{align*}
Note that the fibers of this map are preserved by the circle action described above.

To use Proposition \ref{generalmeth2}, we need to prove that $f(P_L \cap \mu^{-1}(\lambda))$ is simply connected for every $\lambda\in [-2\pi,2\pi]$, and that $\widetilde{f}_\lambda$ is injective. With the following claim, we prove the necessary conditions. By abuse of notation, we refer to fixed values of the moment map and the norms of $p$ and $q$ as $\mu$, $|p|$, and $|q|$, respectively.

\begin{clm} \label{clm: lagbi}
\begin{enumerate}[label=(\roman*)]
    \item \label{claim_lagbi: i} The following relation holds:
    \begin{equation} \label{eq: lagbi}
        \mu^2 + 4\pi^2 t_1^2 = 4\pi^2 |p|^2 |q|^2.
    \end{equation}
    
    \item \label{claim_lagbi: ii} For fixed \(\mu\), the locus in the
\((t_1,t_2)\)-plane is described by parabolic arcs. More precisely, if
\(|p|>|q|\), then the relevant locus is the arc of a concave parabola lying
in the half-plane \(t_2\geq 0\). If \(|q|>|p|\), then the relevant locus is
the arc of a convex parabola lying in the half-plane \(t_2\leq 0\). When
\(|p|=|q|=0\), we obtain \(\mu=t_1=t_2=0\), corresponding to the single point
\((0,0)\).
    
    \item \label{claim_lagbi: iii} The set $f(P_L \cap \mu^{-1}(\lambda))$ is simply connected.
    
    \item \label{claim_lagbi: iv} The induced map $\widetilde{f}_\lambda$ is injective.
\end{enumerate}
\end{clm}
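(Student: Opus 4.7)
My plan is to verify the four assertions in sequence, since each builds on the previous one. The content is largely algebraic, and the only step with substance is the explicit description of the image in \ref{claim_lagbi: iii}.

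For \ref{claim_lagbi: i}, the relation is just Lagrange's identity in the plane: $(p_1 q_2 - p_2 q_1)^2 + (p_1 q_1 + p_2 q_2)^2 = (p_1^2 + p_2^2)(q_1^2 + q_2^2)$. Recognizing the first term as $(\mu/2\pi)^2$ and the second as $t_1^2$, multiplying through by $4\pi^2$ yields \eqref{eq: lagbi} directly. For \ref{claim_lagbi: ii}, I fix $|p|$ (assuming $|p|\geq|q|$, so $t_2 \geq 0$) and use $|q|^2 = |p|^2 - t_2$ to eliminate $|q|$ from \eqref{eq: lagbi}, obtaining
\[
t_2 \;=\; |p|^2 - \frac{t_1^2}{|p|^2} - \frac{\mu^2}{4\pi^2 |p|^2},
\]
a concave parabola in $t_1$ lying in the half-plane $t_2\geq 0$. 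The symmetric case $|q|\geq|p|$ yields a convex parabola in $t_2\leq 0$, and the degenerate situation $|p|=|q|=0$ forces $\mu = t_1 = t_2 = 0$.

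For \ref{claim_lagbi: iii}, I describe the image explicitly. Setting $u = |p|^2$ and $v = |q|^2$, the fibre conditions $u-v = t_2$ together with (by \ref{claim_lagbi: i}) $uv = t_1^2 + \mu^2/(4\pi^2)$ determine $u, v$ uniquely as the two nonnegative roots of a quadratic. Imposing the bidisk constraint $u, v \in [0,1]$ amounts to $(|t_2| + \sqrt{t_2^2 + 4uv})/2 \leq 1$, which after squaring simplifies to the convex inequality
\[
t_1^2 + |t_2| + \frac{\mu^2}{4\pi^2} \;\leq\; 1.
\]
This is a sublevel set of a convex function, hence a convex (and in particular simply connected) region. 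The reverse inclusion, that every such $(t_1,t_2)$ actually arises, follows from the concrete construction $p = (\sqrt{u},0)$ and $q = (t_1/\sqrt{u},\, \mu/(2\pi\sqrt{u}))$, whose squared norms one checks to be $u$ and $v$.

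For \ref{claim_lagbi: iv}, two preimages of the same point have equal $u$ and $v$ by the argument in \ref{claim_lagbi: iii}. Using the $S^1$-action one rotates $p$ to $(|p|,0)$, after which the constraints $p\cdot q = t_1$ and $p_1q_2-p_2q_1 = \mu/(2\pi)$ uniquely determine $q$. The corner case $p=0$ (equivalently $\mu = t_1 = 0$, $t_2 \leq 0$) is handled separately: both preimages have the form $(0,q)$ with fixed $|q|^2 = -t_2$, and the diagonal circle acts transitively on such $q$. The main technical obstacle, such as it is, is purely the bookkeeping in \ref{claim_lagbi: iii}: checking that the inequality characterizing $u,v \leq 1$ genuinely reduces to the clean convex form stated above, and that each corner case (vanishing $|p|$ or $|q|$) is compatible with this description.
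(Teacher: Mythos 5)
Your proof is correct, and parts \ref{claim_lagbi: i}, \ref{claim_lagbi: ii}, \ref{claim_lagbi: iv} follow essentially the same path as the paper (the Lagrange-identity observation in \ref{claim_lagbi: i} is the same two-term expansion, just phrased more memorably; in \ref{claim_lagbi: iv} you rotate $p$ to $(|p|,0)$ where the paper sets $p_1=0$, an immaterial difference).

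For \ref{claim_lagbi: iii} you take a genuinely different and, I think, cleaner route. The paper foliates $f(P_L\cap\mu^{-1}(\lambda))$ by the one-parameter families of parabolas from \ref{claim_lagbi: ii} (parameter $|p|^2$ on $\{t_2\ge 0\}$, parameter $|q|^2$ on $\{t_2\le 0\}$), observes that they collapse to a single point as $|p|^2=|q|^2\to|\lambda|/(2\pi)$, and concludes simple connectivity from this sweeping picture. You instead solve for $u=|p|^2$, $v=|q|^2$ as the nonnegative roots of $X^2-t_2X-\bigl(t_1^2+\mu^2/(4\pi^2)\bigr)$ and show the bidisk constraint $\max(u,v)\le 1$ is equivalent to
\[
t_1^2+|t_2|+\frac{\mu^2}{4\pi^2}\;\le\;1,
\]
a convex sublevel set, with a concrete $(p,q)$ realizing each admissible $(t_1,t_2)$. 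This buys you more than the statement asks: an exact description of the image (a lens bounded by the two boundary parabolas $|p|=1$ and $|q|=1$, consistent with the paper's later computation of $\mathcal{A}(\mu)$), and a proof of simple connectivity via convexity rather than by inspecting a foliation. It also makes the surjectivity step explicit, which the paper leaves implicit. Both approaches are valid; yours is self-contained and sharper.
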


\begin{proof}[Proof of Claim \ref{clm: lagbi}]
\noindent\textbf{(i)} The relation \eqref{eq: lagbi} follows by adding the following equations, obtained from the definition of the system:
\begin{align}
    \mu^2 &= 4\pi^2(p_1^2 q_2^2 - 2p_1 p_2 q_1 q_2 + p_2^2 q_1^2), \label{mulagbi} \\
    4\pi^2 t_1^2 &= 4\pi^2(p_1^2 q_1^2 + 2p_1 p_2 q_1 q_2 + p_2^2 q_2^2). \label{ylagbi}
\end{align}

\vspace{1em}
\noindent\textbf{(ii)} Fix \(\mu\) and set
\[
m=\left|\frac{\mu}{2\pi}\right|.
\]
Assume first that \(|p|>0\) and \(|p|\geq |q|\). Then \(t_2\geq 0\).
Writing \(r=|p|^2\), we have \(|q|^2=r-t_2\). Substituting this into
\eqref{eq: lagbi}, we obtain
\begin{equation} \label{eq: par+}
    r t_2 = r^2 - m^2 - t_1^2.
\end{equation}
Thus, for fixed \(r\), the full equation defines a concave parabola. The part
relevant to the image of \(P_L\cap\mu^{-1}(\lambda)\) is the parabolic arc
\[
t_2=\frac{r^2-m^2-t_1^2}{r},
\qquad
-\sqrt{r^2-m^2}\leq t_1\leq \sqrt{r^2-m^2},
\]
which lies in the half-plane \(t_2\geq 0\).

Similarly, assume that \(|q|>0\) and \(|q|\geq |p|\). Then \(t_2\leq 0\).
Writing \(s=|q|^2\), we have \(|p|^2=s+t_2\). Substituting this into
\eqref{eq: lagbi}, we obtain
\begin{equation} \label{eq: par-}
    s t_2 = -s^2 + m^2 + t_1^2.
\end{equation}
Thus, for fixed \(s\), the full equation defines a convex parabola. The part
relevant to the image of \(P_L\cap\mu^{-1}(\lambda)\) is the parabolic arc
\[
t_2=\frac{-s^2+m^2+t_1^2}{s},
\qquad
-\sqrt{s^2-m^2}\leq t_1\leq \sqrt{s^2-m^2},
\]
which lies in the half-plane \(t_2\leq 0\).

In the case \(|p|=|q|=0\), equation \eqref{eq: lagbi} implies
\(\mu=t_1=0\), and \(t_2=0\) by definition, so the solution is the single
point \((0,0)\).

\begin{figure}[h!]
    \centering
    \includegraphics[width=0.65\textwidth]{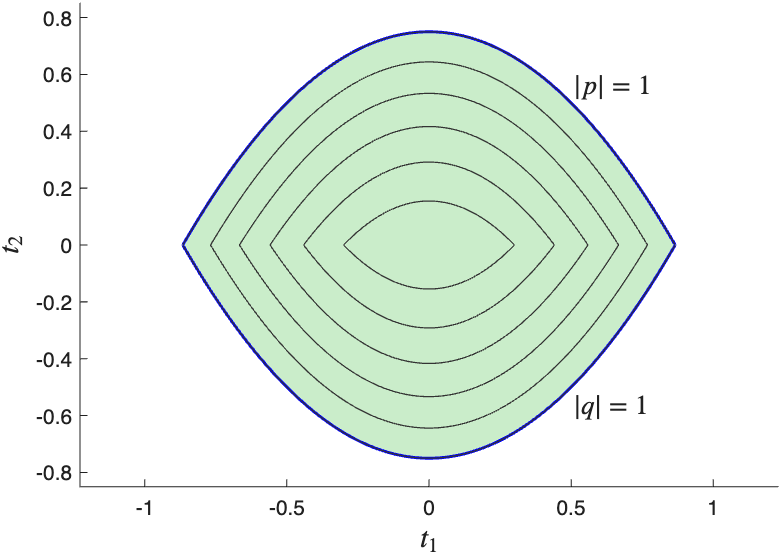}
    \caption{The image of \(P_L\cap\mu^{-1}(\lambda)\) under the map
    \(f=(t_1,t_2)\) for a fixed value of \(\mu\). The upper boundary is the
    parabolic arc corresponding to \(|p|=1\), while the lower boundary is the
    parabolic arc corresponding to \(|q|=1\). The shaded region is foliated by
    the parabolic arcs obtained by fixing \(|p|\) in the upper half and by
    fixing \(|q|\) in the lower half.}
    \label{fig:lagbid_parabolic_region}
\end{figure}

\vspace{1em}
\noindent\textbf{(iii)} The set
\(f(P_L\cap\mu^{-1}(\lambda))\cap\{t_2\geq 0\}\) is foliated by the
parabolic arcs described in \eqref{eq: par+}, with
\[
|p|^2\in\left[\left|\frac{\lambda}{2\pi}\right|,1\right].
\]
Similarly, the set
\(f(P_L\cap\mu^{-1}(\lambda))\cap\{t_2\leq 0\}\) is foliated by the
parabolic arcs described in \eqref{eq: par-}, with
\[
|q|^2\in\left[\left|\frac{\lambda}{2\pi}\right|,1\right].
\]
These two families of arcs meet along \(t_2=0\), and together they fill the
region between the two boundary arcs corresponding to \(|p|=1\) and
\(|q|=1\); see Figure~\ref{fig:lagbid_parabolic_region}. When
\[
|p|^2=|q|^2=\left|\frac{\lambda}{2\pi}\right|,
\]
it follows from \eqref{eq: lagbi} that \(t_1=0\), and clearly \(t_2=0\).
Therefore, the image reduces to the single point \((0,0)\) at the innermost
leaf. Hence the full image \(f(P_L\cap\mu^{-1}(\lambda))\) is simply
connected.

\vspace{1em}
\noindent\textbf{(iv)} To show that $\widetilde{f}_\lambda$ is injective, we first note that equations \eqref{eq: par+} and \eqref{eq: par-} determine $|p|$ and $|q|$ uniquely for each point $(t_1, t_2)$. We now show that each fiber of $\widetilde{f}_\lambda$ consists of exactly one $S^1$-orbit.

If $|p| = 0$, then the unique orbit is given by rotating the coordinates $(q_1, q_2)$ with fixed $|q|$. If $|p| \neq 0$, we may use the $S^1$-action to set $p_1 = 0$, and the values of $q_1$ and $q_2$ are then uniquely determined by:
\[
q_1 = -\frac{\mu}{2\pi p_2}, \qquad q_2 = \frac{t_1}{p_2}.
\]
Thus, the orbit is uniquely specified in all cases.

This concludes the proof.
\end{proof}

In this case, we do not immediately get a foliation by simple closed curves of $f(P_L \cap \mu^{-1}(\lambda))$, but since it is simply connected, we can obtain one. By the last part of the proof of Proposition \ref{generalmeth2}, we know that the singular fibers of our Lagrangian torus fibration are determined by the cases where $f^{-1}(p) \cap \mu^{-1}(\lambda) \ncong S^1$. The set of fixed points of the circle action related to $\mu$ only retracts when $p_1 = p_2 = q_1 = q_2 = 0$, and by the defining equations, this implies that $\mu = t_1 = t_2 = 0$. We can choose a foliation of $f(P_L \cap \mu^{-1}(0))$ given by simple closed curves that retract to a point different from $(0,0)$. 

Now we are going to obtain the shape of the base diagram as in
\cite[Section 2]{achigandrango2024singular}. For this, note that the boundary
of the base diagram does not change by changing the foliation; in fact, it is
given by the limit when \(\{|p|=1\}\cup\{|q|=1\}\). In the
\((t_1,t_2)\)-plane, these two boundary components are the two parabolic arcs
shown in Figure~\ref{fig:lagbid_parabolic_region}.

We now compute, for the boundary components \( \{|p| = 1\} \cup \{|q| = 1\} \), the symplectic area \( \mathcal{A}(\mu) \) of a disk inside \( P_L \) whose boundary lies in the limit Lagrangian projecting to the union of the curves defined by equations~\eqref{eq: par+} (for \( |p| = 1 \)) and~\eqref{eq: par-} (for \( |q| = 1 \)). These two curves join to form a closed, piecewise-smooth curve \( \gamma_\mu \).

Note that we may act with the \( S^1 \)-action on \( f^{-1}(p) \cap \mu^{-1}(\lambda) \) to fix \( q_1 = 0 \). There are exactly two loops in \( f^{-1}(\gamma_\mu) \cap \mu^{-1}(\lambda) \) satisfying \( q_1 = 0 \). We select one of them to compute the area of the corresponding disk.

For the part that projects to equation \eqref{eq: par+} with $|p|=1$, we obtain the parametrization given by $t$:
\begin{align*}
p_1&=\frac{\mu}{\sqrt{\mu^2+4\pi^2t^2}},\\
	p_2&=\frac{2\pi t}{\sqrt{\mu^2+4\pi^2t^2}},\\
	q_2&=\sqrt{\frac{\mu^2}{4\pi^2}+t^2},
\end{align*}
for $t\in \left[-\sqrt{1 - \left(\frac{\mu}{2\pi}\right)^2},\sqrt{1 - \left(\frac{\mu}{2\pi}\right)^2}\right]$.\\

For the part that projects to equation \eqref{eq: par-} with $|q|=1$, we obtain the parametrization given by $t$:
\begin{align*}
p_1&=\frac{\mu}{2\pi},\\
	p_2&=t,\\
	q_2&=1,
\end{align*}
	for $t\in \left[-\sqrt{1 - \left(\frac{\mu}{2\pi}\right)^2},\sqrt{1 - \left(\frac{\mu}{2\pi}\right)^2}\right]$.\\
	
	Now we calculate the symplectic area of the disc $D$ that has the cycle defined above as the boundary. The symplectic area is:
\begin{equation}\label{eq: bound_lagbi}
	\begin{aligned}
		\mathcal{A}(\mu)=\int_D \omega &=\int_{\partial D} (p_1dq_1+p_2dq_2)\\
		&=\int_{|p|=1} (p_1dq_1+p_2dq_2)+\int_{|q|=1} (p_1dq_1+p_2dq_2)\\
		&=\int_{|p|=1} p_2dq_2\\
		&=\int_{-\sqrt{1 - \left(\frac{\mu}{2\pi}\right)^2}}^{\sqrt{1 - \left(\frac{\mu}{2\pi}\right)^2}} \frac{4\pi^2t^2}{\mu^2+4\pi^2t^2}dt\\
		&=\left[t - \frac{\mu \cdot \arctan\left(\frac{2\pi \cdot t}{\mu}\right)}{2\pi}\right]_{-\sqrt{1 - \left(\frac{\mu}{2\pi}\right)^2}}^{\sqrt{1 - \left(\frac{\mu}{2\pi}\right)^2}}\\
		&=\frac{\sqrt{-\mu^2 + 4 \pi^2} - \mu \arctan \left(\frac{\sqrt{-\mu^2 + 4 \pi^2}}{\mu} \right)}{\pi}
	\end{aligned}
	\end{equation}
\end{proof}

\begin{rmk}
	To recover the result from \cite{ramos2017symplectic}, we perform the substitution $\mu = 2\pi \cos(\alpha/2)$ with $\alpha \in [0, 2\pi]$, and then apply the mutations as indicated in Figure~\ref{fig:convex_base_lagbi_2}. 
\end{rmk}

\section{Embedding results} \label{sec: per}

\subsection{Proof of Proposition \ref{prop: capacity_S^3}}

The idea to find embedded balls into $D^*S^3$ is by making use of Traynor's trick, introduced in \cite{Traynor1995SymplecticPC}, in the fibration obtained in Theorem \ref{thm: highellips}. The same approach could be used for the case of $D^*\mathcal{E}(1,1,c,c)$ as explained in Section \ref{sec: embres}. 

As remarked in \cite{Traynor1995SymplecticPC}, a weaker version of the Traynor trick holds in any dimension. In what follows we make a precise statement of such version and give a proof, for the sake of completeness.

\begin{definition}
Let 
\begin{align*}
D(r):&=\{(x,y)\in \mathbb{R}^2\mid x^2+y^2\leq r\},\\
SD(r):&=D(r)-\{x\geq 0,y=0\},\\
SD(r_1,\ldots,r_n):&=SD(r_1)\times\ldots\times SD(r_n),\\
\Delta^n(r):&=\{(x_1,\ldots,x_n)\in \mathbb{R}_{>0}^n\mid 0<x_1+\ldots+x_n<r\},\\
C^n(r):&=\{(y_1,\ldots,y_n)\in \mathbb{R}^n\mid 0<y_1,\ldots,y_n<r\}.
\end{align*}
\end{definition}

Notice that, the Lagrangian product $\Delta ^n(r)\times_L C^n(\pi)\subset \mathbb{C}^n$  has the same volume as the ball $B^{2n}(r)$ of radius $\sqrt{r}$.

The next lemma is due to McDuff, Polterovich and Traynor; and it is usually refered to as the Traynor trick.

\begin{lemma}\label{lem: traynor}
For every $\rho<r$, there exists a symplectic embedding $B^{2n}(\rho)\hookrightarrow \Delta^n(r)\times_L C^n(\pi)$.
\end{lemma}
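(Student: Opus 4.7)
The plan is to exhibit the embedding by writing $B^{2n}(\rho)$ in action-angle coordinates for the standard coordinatewise Hamiltonian $T^n$-action on $\mathbb{C}^n$ and then flattening the angle tori into an open cube of side $\pi$ via a symplectic rescaling; the slack $\rho < r$ is what will permit closing up the resulting codimension-one slit at the end.

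On $(\mathbb{C}^n, \omega_0)$ with $\omega_0 = \sum dx_j \wedge dy_j$, I consider the rotation action $(\theta_1, \dots, \theta_n) \cdot z = (e^{i\theta_1} z_1, \dots, e^{i\theta_n} z_n)$, whose moment map $\mu(z) = (\pi |z_1|^2, \dots, \pi |z_n|^2)$ sends $B^{2n}(\rho)$ onto $\overline{\Delta^n(\rho)}$. After removing the subset $N := \bigcup_j \{z_j \in \mathbb{R}_{\ge 0}\}$, the action-angle coordinates $(a_j, \phi_j) := (\pi |z_j|^2, \arg z_j)$ are well defined with $a_j > 0$, $\sum_j a_j \le \rho$, and $\phi_j \in (0, 2\pi)$, and in these coordinates one computes $\omega_0 = \frac{1}{2\pi} \sum_j da_j \wedge d\phi_j$.

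Next I define the symplectic rescaling $\Psi(a, \phi) := (a/\pi,\, \phi/2)$. A direct pullback computation gives $\Psi^*\!\left(\sum dx_j \wedge dy_j\right) = \frac{1}{2\pi} \sum da_j \wedge d\phi_j$, so $\Psi$ is a symplectomorphism sending the image of action-angle into $\{(x,y) : x_j > 0,\, \sum x_j \le \rho/\pi,\, y_j \in (0, \pi)\} \subset \Delta^n(r) \times_L C^n(\pi)$; the containment uses $\rho/\pi < \rho < r$ (since $\pi > 1$). Composing $\Psi$ with the inverse action-angle parametrization yields a symplectic embedding $B^{2n}(\rho) \setminus N \hookrightarrow \Delta^n(r) \times_L C^n(\pi)$.

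The main obstacle is to extend this map across $N$ to an embedding of the full closed ball. The map $\Psi$ cannot be extended continuously over the slit $\{\phi_j = 0\}$: its two sides would be sent to the disjoint cube faces $\{y_j = 0\}$ and $\{y_j = \pi\}$. This is precisely where the strict gap $\rho < r$ is essential, as it places the image of $B^{2n}(\rho) \setminus N$ compactly inside $\Delta^n(r) \times_L C^n(\pi)$; a compactly supported Hamiltonian isotopy of the target — closing up the slit — then upgrades the above to a genuine symplectic embedding of the full ball. In dimension $2n = 2$ this follows immediately from Moser's theorem, while for general $n$ one may first embed the slightly smaller closed ball $\overline{B^{2n}(\rho - \delta)}$ (whose image lies strictly interior) and then pass to the limit $\delta \to 0^+$, or alternatively invoke a symplectic folding argument in the spirit of Lalonde-McDuff.
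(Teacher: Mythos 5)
Your setup is essentially the same as the paper's first step: the map $(z_1,\dots,z_n)\mapsto(|z_j|^2,\tfrac12\arg z_j)_j$ is, up to a sign, the inverse of the paper's symplectomorphism $\psi(x,y)=(\sqrt{x_j}\cos 2y_j,-\sqrt{x_j}\sin 2y_j)_j$ from $\Delta^n(r)\times_L C^n(\pi)$ onto the slit set $SD(r,\dots,r)$ inside the ball. So far so good.

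The gap is in the final paragraph. Your map is only defined on $B^{2n}(\rho)\setminus N$, and it sends the two sides of each slit $\{z_j\in\mathbb{R}_{\ge0}\}$ to the two \emph{opposite} walls $\{y_j=0\}$ and $\{y_j=\pi\}$ of the cube factor. No compactly supported Hamiltonian isotopy of the target can ``close up'' this slit: the target $\Delta^n(r)\times_L C^n(\pi)$ is an open contractible set, and identifying $\{y_j\to 0^+\}$ with $\{y_j\to\pi^-\}$ would change its topology, so no ambient isotopy can make a map with this discontinuous boundary behaviour extend continuously over $N$. Worse, the image of $B^{2n}(\rho)\setminus N$ already fills the full cube direction $C^n(\pi)$ — the slack $\rho<r$ only gives room in the simplex directions, which is orthogonal to where the problem lives — so the image is not even relatively compact in the $y_j$ directions. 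Your fallback ``embed $\overline{B^{2n}(\rho-\delta)}$ and let $\delta\to 0^+$'' does not address this either: the slit $N$ meets $B^{2n}(\rho-\delta)$ for every $\delta>0$, so the discontinuity across $N$ is present at every stage of the limit.

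What is missing is the actual content of the Traynor trick, which is to \emph{precompose rather than extend}: one chooses a coordinatewise area-preserving embedding $\sigma_\rho\colon D(\rho)\hookrightarrow SD(r)$ of a round disk into a \emph{slit} disk (this uses $\rho<r$ to make room for a spiral or accordion-type folding, with a radial estimate $|\sigma_\rho(x,y)|^2\le x^2+y^2+(r-\rho)$ to keep the image inside the ball), and then applies $\Psi_\rho=\sigma_\rho\times\cdots\times\sigma_\rho$ to $B^{2n}(\rho)$. The image of $\Psi_\rho$ lies in $\operatorname{Int}B^{2n}(r)\cap SD(r,\dots,r)$, i.e.\ it lands entirely in the domain of $\psi^{-1}$, so the slit is never crossed and nothing needs to be extended. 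This folding map $\sigma_\rho$ — which you gesture at with ``in the spirit of Lalonde--McDuff'' but do not construct — is the key lemma of the proof, and without it the argument is incomplete.
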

\begin{proof}
Consider the map $\psi:\Delta^n(r)\times_L C^n(\pi)\rightarrow \textup{Int }B^{2n}(r)$ given by 
$$\psi(x_1,\ldots,x_n,y_1,\ldots,y_n)=(\sqrt{x_1}\cos(2y_1),-\sqrt{x_1}\sin(2y_1),\ldots,\sqrt{x_n}\cos(2y_n),-\sqrt{x_n}\sin(2y_n)).$$
It is not hard to see that this is a symplectic embedding and that 
$$\psi(\Delta^n(r)\times_L C^n(\pi))=\textup{Int }B^{2n}(r)\cap SD(r,\ldots,r).$$
Given $\rho<r$, choose an area-preserving diffeomorphism 
$$\sigma_{\rho}:D(\rho)\rightarrow SD(r),$$
such that, if $x^2+y^2\leq \alpha$ then $|\sigma_{\rho}(x,y)|^2\leq \alpha+(r-\rho)$. Hence, the map $\Psi_{\rho}:D(\rho)\times\cdots\times D(\rho)\rightarrow SD(r,\ldots,r)$ defined by
$$\Psi_{\rho}(x_1,y_1,\ldots,x_n,y_n)=(\sigma_{\rho}(x_1,y_1),\ldots,\sigma_{\rho}(x_n,y_n)),$$
is a symplectic embedding and $\Psi_{\rho}(B^{2n}(\rho))\subset \textup{Int }B^{2n}(r)\cap SD(r,\ldots,r)$. Therefore the map $\psi^{-1}\circ\Psi_{\rho}$ restricted to $B^{2n}(\rho)$ is a symplectic embedding into $\Delta^n(r)\times_L C^n(\pi)$. 
\end{proof}

\begin{lemma}\label{lem: embedding ball D*S^n}
For any $\varepsilon >0$ small enough, there exists a symplectic embedding $B^{6}(2\pi-\varepsilon)\hookrightarrow D^*S^3$.
\end{lemma}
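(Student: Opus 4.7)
The plan is to apply the Traynor trick (Lemma~\ref{lem: traynor}) to the singular Lagrangian fibration $f : (D^*S^3, \omega_{\mathrm{can}}) \to A_1$ produced by Theorem~\ref{thm: highellips} with $c = 1$, using that $\mathcal{E}(1,1,1,1) = S^3$. The idea is that in the interior of $A_1$ we should be able to locate an open region with the integral affine structure of a standard simplex $\Delta^3(2\pi - \varepsilon)$, whose preimage is then, by Arnold--Liouville, symplectomorphic to the toric domain $\Delta^3(2\pi-\varepsilon) \times_L C^3(\pi)$, into which the Traynor trick embeds the ball.

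First, I would describe the base $A_1 \subset \mathbb{R}^3$, the image of $(\mu_1,\mu_2,\mathcal{A}_1)$ with $\mathcal{A}_1$ given by~\eqref{eq: boundary_11cc} at $c=1$, which is depicted in Figure~\ref{fig:convex_1111}. From the proof of Theorem~\ref{thm: highellips} the singular (nodal) locus in $A_1$ consists of the two disjoint arcs $\{\mu_1 = 0,\ t_1 = t_2 = 0\}$ and $\{\mu_2 = 0,\ t_1 = 1,\ t_2 = 0\}$, corresponding to the fixed loci of the two circle actions. The next step is to pick a simply connected open region $\Omega \subset A_1$ whose closure avoids these two arcs and, after an appropriate $SL(3,\mathbb{Z})$ change of the integral affine chart at a suitable vertex of $A_1$, coincides with the standard open simplex $\Delta^3(2\pi - \varepsilon)$ of the toric domain.

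Over such an $\Omega$ the fibration $f$ is a regular Lagrangian torus fibration with trivial monodromy, so by Arnold--Liouville there is a symplectomorphism
\[
f^{-1}(\Omega) \;\cong\; \Omega \times_L T^3 \;\supset\; \Delta^3(2\pi - \varepsilon) \times_L C^3(\pi).
\]
Composing the Traynor embedding of Lemma~\ref{lem: traynor} with this symplectomorphism and with the inclusion into $D^*S^3$ yields
\[
B^6(2\pi - 2\varepsilon) \;\hookrightarrow\; \Delta^3(2\pi-\varepsilon) \times_L C^3(\pi) \;\hookrightarrow\; D^*S^3,
\]
which is the required embedding after renaming $\varepsilon$.

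The main obstacle will be the second step: confirming that $A_1$ really does contain a simplex of size exactly $2\pi$ in its integral affine structure. This needs a precise analysis of $\mathcal{A}_1$ near a corner of $A_1$ where two boundary strata meet transversally in an $SL(3,\mathbb{Z})$-standard fashion, together with the restoration of the factor $2\pi$ that was dropped in the proof of Theorem~\ref{thm: highellips}. The $2\pi$ should in the end reflect the period of the closed geodesics on the round $S^3$. This is the place where the result from \cite{Loi2013SymplecticCO} mentioned in the acknowledgements appears to enter: it likely provides a Darboux-type chart identifying a neighbourhood of such a corner in $D^*S^3$ with a standard symplectic ball/toric model, making the local integral affine geometry of $A_1$ near the corner explicit and thereby pinning the size of the inscribed simplex down to $2\pi$.
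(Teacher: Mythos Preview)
Your overall strategy matches the paper's: use the singular Lagrangian fibration on $D^*S^3$ from Theorem~\ref{thm: highellips} with $c=1$, locate a standard $3$--simplex of size $2\pi-\varepsilon$ in the integral affine structure of the base, and apply the Traynor trick (Lemma~\ref{lem: traynor}). The paper carries this out concretely: it writes down an explicit pyramid $P_\varepsilon$ in the base (with vertices spanning $\mu_1\in[-2\pi+\varepsilon,\,2\pi-\varepsilon]$ and $\mu_2\ge \varepsilon/2$), applies two transferring-the-cut matrices across the planes $\mu_1=0$ and $\mu_2=0$, then a global $SL(3,\mathbb{Z})$ change of chart, and checks that $P_\varepsilon$ becomes a standard simplex $\overline{P}_\varepsilon$ of edge length $2\pi-\varepsilon$. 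The final embedding uses Buhovsky's Symplectic Isotopy Extension Theorem (Lemma~8.1 in \cite{Schlenk2017SymplecticEP}) rather than a bare Arnold--Liouville identification.

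Where your plan goes wrong is in the proposed resolution of the step you correctly flag as the main obstacle. The reference \cite{Loi2013SymplecticCO} plays \emph{no role} in this lemma; it is used only afterwards, in the proof of Proposition~\ref{prop: capacity_S^3}, to supply the \emph{upper} bound $c_{Gr}(Q^n)=2\pi$ via the Gromov width of Hermitian symmetric spaces. The size $2\pi$ here is not extracted from any external Darboux-type result but is read off directly from the explicit base diagram after the transferring-the-cut operations. In particular, note that the paper's pyramid $P_\varepsilon$ does \emph{not} stay on one side of the branch cut at $\mu_1=0$; its $\mu_1$-extent is nearly $4\pi$, and it is precisely the cut-transfer that folds this into a standard simplex of size $2\pi$. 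If you attempted to realize your $\Omega$ entirely within a single affine chart avoiding the cut, you would only see a simplex of roughly half the required size. So the missing ingredient in your plan is not an external lemma but the explicit transferring-the-cut computation.
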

\begin{proof}
Consider the pyramid $P_{\varepsilon} \subset \mathbb{R}^3$ defined as the convex hull of the following set of points:
\[
\left\{
\left(2\pi - \varepsilon, \frac{\varepsilon}{2}, \frac{\varepsilon}{4}\right),
\left(-2\pi + \varepsilon, \frac{\varepsilon}{2}, \frac{\varepsilon}{4}\right),
\left(0, 2\pi - \frac{\varepsilon}{2}, \frac{\varepsilon}{4}\right),
\left(0, \frac{\varepsilon}{2}, \pi - \frac{\varepsilon}{4}\right)
\right\}.
\]
Note that $P_{\varepsilon}$ is contained in the image of the moment map of the fibration described in Theorem~\ref{thm: highellips} for $c = 1$, since the moment image is the convex hull of 
\[
\left\{
\left(2\pi,0,0\right),
\left(-2\pi,0,0\right),
\left(0,2\pi,0\right),
\left(0,-2\pi,0\right),
\left(0,0, \pi\right)
\right\}.
\]

We now perform the transferring the cut operations as in \cite[Section 6.3]{achigandrango2024singular}. First, we apply the matrix
\[
\begin{pmatrix}
1 & 0 & 0 \\
0 & 1 & 0 \\
-1 & 0 & 1
\end{pmatrix}
\]
to the region defined by $\mu_1 < 0$. Then, we apply the matrix
\[
\begin{pmatrix}
1 & 0 & 0 \\
0 & 1 & 0 \\
0 & -1 & 1
\end{pmatrix}
\]
to the region defined by $\mu_2 < 0$. Finally, we apply the matrix
\[
\begin{pmatrix}
1 & 0 & 1 \\
0 & 1 & 0 \\
0 & 0 & 1
\end{pmatrix}
\]
to the entire image of the moment map.

As a result, the pyramid $P_{\varepsilon}$ is transformed into a new pyramid $\overline{P}_{\varepsilon}$, defined as the convex hull of the points:
\[
\left\{
\left(\frac{\varepsilon}{4}, \frac{\varepsilon}{2}, \frac{\varepsilon}{4}\right),
\left(2\pi - \frac{3\varepsilon}{4}, \frac{\varepsilon}{2}, \frac{\varepsilon}{4}\right),
\left(\frac{\varepsilon}{4}, 2\pi - \frac{\varepsilon}{2}, \frac{\varepsilon}{4}\right),
\left(\frac{\varepsilon}{4}, \frac{\varepsilon}{2}, 2\pi - \frac{3\varepsilon}{4}\right)
\right\}.
\]

By Lemma~\ref{lem: traynor}, for every $\delta > 0$ sufficiently small, there exists a symplectic embedding of the ball $B^6(2\pi - \varepsilon - \delta)$ into $\overline{P}_{\varepsilon} \times_L C^3(\pi)$. The claim then follows directly from the Symplectic Isotopy Extension Theorem due to Buhovsky and exposed in \cite{Schlenk2017SymplecticEP}, see their Lemma 8.1.
\end{proof}

We now prove the upper bound in Proposition \ref{prop: capacity_S^3}. For this, we define the Fermat quadric $Q^n$ in $\mathbb{C}P^{n+1}$ as 
$$Q^n=\{[z_0:\ldots:z_{n+1}]\in \mathbb{C}P^{n+1}\mid z_0^2+\ldots+z_{n+1}^2=0\}.$$
We also refer to $H_i\subset \mathbb{C}P^{n+1}$ as the hyperplane with vanishing $i$-th coordinate. It is not hard to see that the intersection $Q^n\cap H_{n+1}$ is naturally symplectomorphic to $Q^{n-1}$, and therefore, we decide to abuse the notation and denote this intersection as such. We next show how to identify $D^*S^{n}$ with $Q^n$, in a symplectic manner. The following proposition is folklore, we follow \cite{Oakley2013OnCL} and \cite{Adaloglou2022UniquenessOL}.

\begin{prop}\label{prop: embedding D*S^n into Q^n}
There exists a symplectomorphism $D^*S^n\hookrightarrow Q^n-Q^{n-1}$, where $Q^n-Q^{n-1}$ is considered with the restriction of the symplectic form $2\omega_{FS}$ induced by inclusion in $\mathbb{C}P^{n+1}$.
\end{prop}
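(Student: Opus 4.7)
The plan is to exhibit the desired symplectic embedding by writing down an explicit formula and verifying it is symplectic. First, I would realize $T^*S^n$ concretely as the affine variety
\[
T^*S^n \;=\; \{(u,v)\in\mathbb{R}^{n+1}\times\mathbb{R}^{n+1}\mid |u|=1,\ \langle u,v\rangle = 0\},
\]
with canonical form $\omega_{can}$ given by the restriction of $\sum_j dv_j\wedge du_j$, so that $D^*S^n = \{(u,v)\mid |v|<1\}$. On the target side, identify the affine chart $\{z_{n+1}\neq 0\}$ of $\mathbb{C}P^{n+1}$ with $\mathbb{C}^{n+1}$ via $[w_0:\cdots:w_n:1]$; this identifies $Q^n - Q^{n-1}$ with the affine quadric
\[
V \;=\; \{w\in\mathbb{C}^{n+1}\mid w_0^2+\cdots+w_n^2 = -1\}.
\]

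Next, I would define the map $\Phi : T^*S^n \to V$ by
\[
\Phi(u,v) \;=\; iu\cosh|v| \;-\; v\,\frac{\sinh|v|}{|v|},
\]
extended smoothly at $v=0$ by $\sinh(0)/0 = 1$. Using $|u|=1$ and $\langle u, v\rangle = 0$, a direct computation gives $\sum_j \Phi_j^2 = -\cosh^2|v| + \sinh^2|v| = -1$, so $\Phi$ indeed takes values in $V$. Writing $w = p + iq$ with $p,q\in\mathbb{R}^{n+1}$, membership in $V$ forces $|p|^2 - |q|^2 = -1$ and $\langle p,q\rangle = 0$, and the explicit inverse $u = q/|q|$, $|v| = \operatorname{arcsinh}|p|$, $v = -p\,|v|/|p|$ shows that $\Phi$ is a diffeomorphism. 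Composing with the inclusion $V \hookrightarrow Q^n - Q^{n-1}$ and restricting to $\{|v|<1\}$ yields the smooth embedding $D^*S^n \hookrightarrow Q^n - Q^{n-1}$.

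The remaining, and crucial, task is to verify the identity $\Phi^*(2\omega_{FS}) = \omega_{can}$. On the chosen affine chart one has $\omega_{FS} = i\partial\bar\partial\log(1+|w|^2)$, and a short computation shows $|\Phi(u,v)|^2 = \cosh(2|v|)$; therefore
\[
\Phi^*(2\omega_{FS}) \;=\; 2i\partial\bar\partial\log\bigl(1+\cosh 2|v|\bigr) \;=\; 4\,i\partial\bar\partial\log\cosh|v|.
\]
It then remains to check that this two-form equals $\omega_{can}$. I would exploit the fact that both forms are invariant under the diagonal $SO(n+1)$-action on $T^*S^n$, which reduces the verification to a two-variable computation on a single $SO(n)$-orbit slice (e.g. fixing $u=e_1$ and $v$ in the span of $e_2$). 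This matching of Kähler potentials is the main obstacle of the proof — the map is easy to write down, but tracking the complex structure pulled back via $\Phi$ and comparing potentials requires care. The normalization $2\omega_{FS}$ appearing in the statement is chosen precisely so that this identification is clean, in agreement with the classical symplectomorphism between $T^*S^n$ and the affine quadric cited in the acknowledgements and in \cite{Loi2013SymplecticCO, Oakley2013OnCL, Adaloglou2022UniquenessOL}.
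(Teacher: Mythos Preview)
Your approach has a genuine error. The map $\Phi$ you write down is indeed a diffeomorphism $T^*S^n \to V \cong Q^n - Q^{n-1}$ (it is essentially the classical biholomorphism coming from the adapted complex structure on $T^*S^n$), but the identity $\Phi^*(2\omega_{FS}) = \omega_{can}$ that you set out to verify is \emph{false}. Here is a quick obstruction: if this identity held on the open set $D^*S^n$, then by real-analyticity of both sides it would hold on all of $T^*S^n$; but $(T^*S^n,\omega_{can})$ has infinite symplectic volume, whereas $(Q^n - Q^{n-1},\,2\omega_{FS})$ has finite volume (equal to that of the compact quadric $Q^n$, since $Q^{n-1}$ has measure zero). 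So no diffeomorphism of $T^*S^n$ onto $V$ can intertwine these two forms. Relatedly, your $\Phi$ restricted to $D^*S^n$ only hits the proper subset $\{|w|^2 < \cosh 2\}\cap V$, so it is not onto $Q^n - Q^{n-1}$, whereas the paper establishes a bijection. The step you flag as the ``main obstacle'' is therefore not merely hard --- it cannot be carried out.

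The paper's argument is different and much more elementary. It regards $D^*S^n$ as a subset of $\mathbb{C}^{n+1}$ via the tautological inclusion $(u,v)\mapsto u+iv$, observes that $|u+iv|^2 = 1+|v|^2 < 2$ so that $D^*S^n \subset B^{2n+2}(\sqrt{2})$, and then uses the standard symplectic chart
\[
\phi\colon \bigl(B^{2n+2}(\sqrt{2}),\omega_{std}\bigr)\longrightarrow \bigl(\mathbb{C}P^{n+1}\setminus H_{n+1},\,2\omega_{FS}\bigr),\qquad z\longmapsto \Bigl[z:i\sqrt{2-|z|^2}\,\Bigr].
\]
A short algebraic check then shows that $\phi$ carries $D^*S^n$ bijectively onto $Q^n - Q^{n-1}$. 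No K\"ahler-potential matching or Grauert-tube machinery is needed: the symplectic property comes for free from the ball chart, and the condition $|v|<1$ is precisely what keeps $u+iv$ inside the ball of radius $\sqrt{2}$.
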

\begin{proof}
We first claim there exists a symplectictomorphism from $B^{2n+2}(r)\subset (\mathbb{C}^{n+1},\omega_{std})$ to $(\mathbb{C}P^{n+1}-H_{n+1},r^2\omega_{FS})$. To see this, consider the map $\phi:B^{2n+2}(r)\rightarrow \mathbb{C}P^{n+1}-H_{n+1}$ given by 
$$\phi(z_0,\ldots,z_n)=\bigg[z_0:\ldots:z_n:i\sqrt{r^2-\sum|z_i|^2}\bigg].$$
The map $\phi$ factorizes trough maps $\widetilde{\phi}:B^{2n+2}(r)\hookrightarrow S^{2n+3}(r)\subset \mathbb{C}^{n+2}$ and $\pi_r:S^{2n+3}\rightarrow \mathbb{C}P^{n+1}$, where $\widetilde{\phi}$ is the map
$$\widetilde{\phi}(z_0,\ldots,z_n)=\bigg(z_0,\ldots,z_n,i\sqrt{r^2-\sum|z_i|^2}\bigg),$$
and $\pi_r$ the standard quotient by the scalar action. Notice that the natural inclusion $i_r:S^{2n+3}(r)\hookrightarrow \mathbb{C}^{n+2}$ satisfies
$$r^2\pi_r^*\omega_{FS}=i_r^*\omega_{std}.$$
From this we have that
$$\phi^*\omega_{FS}=\widetilde{\phi}^*\pi_r\omega_{FS}=\frac{1}{r^2}\widetilde{\pi}^*i_r^*\omega_{std}=\frac{1}{r^2}\omega_{std}.$$
The reason for the last equality to hold is given to the fact that the coordinate $z_{n+1}$ in the image of $\widetilde{\phi}$ is purely imaginary. The claim follows.

Note now that $D^*S^n$ is a subset of $B^{2n+2}(\sqrt{2})$. We show next that $\phi$ maps $D^*S^n$, seen as a subset of $B^{2n+2}(\sqrt{2})$, bijectively to $Q^n-Q^{n-1}$. To see this, let $[z_0:\ldots:z_n:z_{n+1}]\in \mathbb{C}P^{n+1}-H_{n+1}$, as $[z_0:\ldots:z_n:z_{n+1}]\in \phi(B^{2n+2}(\sqrt{2}))$, we have that
\begin{align*}
z_0^2+\ldots+z_n^2+z_{n+1}^2&=z_0^2+\ldots+z_n^2-2+\sum_{k=0}^n|z_k|^2,\\
&=\sum_{k=0}^n(x_k^2-y_k^2)+2i\sum_{k=0}^nx_ky_k-2+\sum_{k=0}^n(x_k^2+y_k^2),\\
&=2\bigg(\sum_{k=0}^nx_k^2-1\bigg)+2i\sum_{k=0}^nx_ky_k,
\end{align*}
and the result follows.
\end{proof}

\begin{proof}[Proof of Proposition \ref{prop: capacity_S^3}]
It follows from Lemma \ref{lem: embedding ball D*S^n} that $c_{Gr}(D^*S^3) \geq 2\pi$. On the other hand, by Proposition \ref{prop: embedding D*S^n into Q^n}, $D^*S^n$ embeds into $Q^n$ and given that $Q^n$ is a Hermitian symmetric space of compact type, we have by the main result in \cite{Loi2013SymplecticCO} that $c_{Gr}(Q^n) = 2\pi$. Then by monotonicity of symplectic capacities we have that $c_{Gr}(D^*S^n) \leq 2\pi$, and the result follows. 
\end{proof}

\begin{rmk}
It follows trivially from Proposition \ref{prop: ellipsoids}, and this was already observed in \cite{Ferreira2021SymplecticEI}, that there is a full packing of $D^*S^2$ by two symplectic balls of equal size. This, together with results in \cite{Gromov1985PseudoHC}, \cite{Mcduff1994SymplecticPA} and \cite{Biran1996SymplecticPI}, imply there is also a full packing of $D^*S^2$ by 8 or $2n$ balls of equal size, for $n\geq 9$. We can also see there are very full packings of $D^*S^2$ by 2, 5 and $n$ balls for $n\geq 7$.
\end{rmk}

\bibliographystyle{amsalpha}
\bibliography{Lagfib}

\end{document}